\newtheorem{theorem}{Theorem}[section]
\newtheorem{prop}[theorem]{Proposition}
\newtheorem{corollary}[theorem]{Corollary}
\newtheorem{definition}[theorem]{Definition}
\newtheorem{lemma}[theorem]{Lemma}
\newtheorem{remark}[theorem]{Remark}
\let\Section=\section
\def\section{\setcounter{equation}{0}\Section}
\newenvironment{proof}[1][Proof]{\textbf{#1.} }{\ \rule{0.5em}{0.5em}}
\newcommand{\R}{\mathbb{R}}
\def\RR{\mathbb{R}}
\def\EE{\mathbb{E}}
\def\si{{\sigma}}
\def\si{{\sigma}}
\def\Si{{\Sigma}}
\def \eref#1{\hbox{(\ref{#1})}}
\begin{document}

\title{On  H\"older continuity of the solution of stochastic wave equations }

\author{Yaozhong {\sc Hu}\thanks{Y.  Hu is
partially supported by a grant from the Simons Foundation
\#209206.}, \  Jingyu {\sc Huang} \ and \  David
{\sc Nualart}\thanks{ D. Nualart is supported by the
NSF grant DMS1208625. \newline
  Keywords: Stochastic wave equation, Green's functions,  space-inhomogeneous Gaussian noises,
  fractional Gaussian noises, Fourier transform,  spectral measure,   Riesz kernel, Bessel kernel,
  H\"older continuity,  linear stochastic wave equations,
  optimal H\"older exponent.    }  \\
Department of Mathematics \\
University of Kansas \\
Lawrence, Kansas, 66045 USA}
\date{}
\maketitle

\begin{abstract}
In this paper, we study the  stochastic wave equations in the spatial dimension 3 driven by a Gaussian noise which is  white in time and correlated in space. Our main concern is the  sample path  H\"older continuity of the solution both in time variable and in space
 variables. The   conditions are given either in terms of  the mean H\"older continuity of the covariance function or in terms of its spectral measure.   Some examples of the  covariance functions are proved to satisfy our conditions, which include the case of the work \cite{dalang4}.  In particular, we obtain the H\"older continuity results for the solution of the stochastic wave equations  driven by (space inhomogeneous) fractional Brownian noises. For this particular noise,  the optimality of the obtained H\"older exponents
is also discussed.
\end{abstract}

\section{Introduction}
We shall study  the following  stochastic wave equation in spatial dimension
$d=3$:
\begin{eqnarray}\label{our SPDE}
 \left\{\begin{array}{rcl}
&& \left(\frac{\partial ^2}{\partial t^2}  -\Delta  \right) u(t,x)
 = \si(t,x,u(t,x)) \dot W(t,x)  +b(t,x, u(t,x)),  \\ \\
&&u(0, x)=v_0(x),\qquad \frac{\partial u}{\partial t}(0, x)=\bar
v_0(x),
\end{array}
\right.
\end{eqnarray}
where $t\in(0,T]$ for some fixed $T>0$, $x \in \mathbb{R}^3$ and
$\Delta=\frac{\partial ^2}{\partial x_1^2} +\frac{\partial ^2}{\partial x_2^2} +\frac{\partial ^2}{\partial x_3^2} $ denotes the Laplacian on $\mathbb{R}^3$. The coefficients
$\sigma$ and $b$ satisfy some regularity conditions which will be specified later. The Gaussian noise
process $\dot{W}$ is  assumed  to be white in time and with a homogenous correlation in space.   This means
 \[
\EE \left[\dot W(t,x)\dot W(s, y)\right]=\delta(t-s)  f(x-y)
\]
for  a non-negative, non-negative definite and locally integrable function function  $f$,  where $\delta$ is the Dirac delta function.

It is known (see, for instance, \cite[Theorem
4.3]{dalang5})   that if $\sigma$ and $b$ are Lipschitz functions with linear growth and $f$ satisfies $\int_{|x| \le 1} f(x)/|x| dx<\infty$, then there is a unique mild solution to Equation (\ref{our SPDE}).
Our purpose is to establish the   sample path  H\"older continuity both in time variable and in space
 variables   of
the solution to this equation. When   $f$ is given by a Riesz kernel $|x|^{-\beta}$, $\beta \in (0,2)$,
   the H\"older continuity of the solution  has been  obtained
by Dalang and Sanz-Sol\'e in their  monograph \cite{dalang4}.  Their approach is
based on   the  fractional Sobolev imbedding theorem and the Fourier transformation technique.

In this paper,  we shall consider more general Gaussian noises,   and we introduce a new approach  that avoids the Fourier transform. The main idea is to  impose conditions on the covariance $f$ itself.    To be more precise,   let $D_w f=f(\cdot +w)$  be the shift operator.  We shall show that if  $\|D_w f- f\|_{L^1(\rho)}\leq C |w|^{\gamma}$ and $\|D_w f +D_{-w}f -2f\|_{L^1 (\rho)}\leq C |w|^{\gamma^{\prime}}$ for some $\gamma \in (0,1]$ and $\gamma^{\prime} \in (0,2]$, where $\rho$ is the  measure on $\RR^3$ defined to be $\rho(dz)={\bf 1}_{\{|z|\leq 2T\} }\frac{1}{|z|}dz$, then the solution to \eref{our SPDE} is locally   H\"older continuous  of order $\kappa < \min(\gamma, \frac {\gamma'}2)$ in the space variable  (assuming zero initial conditions)  (see Theorem \ref{space Holder Thm no Fourier}).

The H\"older continuity in the time variable is more involved. Following the methodology used by Dalang and Sanz-Sol\'e in \cite{dalang4}, we transform the  time increments into space increments, and we impose suitable  assumptions on the modulus of continuity of  a shift operator which are formulated integrals over $[0,T]\times (S^2)^2$, equipped
with the measure $ds \sigma(d\xi)\sigma(d\eta)  $, where $\sigma$ is the uniform measure in the unit sphere $S^2$
  (see  Theorem \ref{time holder Thm}).

We also obtain a  theorem on the H\"older continuity in  the space variable using  the Fourier transform technique.  More precisely, we establish the  H\"older continuity of order $\kappa <\gamma$, provided the spectral measure $\mu$ satisfies the integrability condition $\int_{\mathbb{R}^3}   \frac { \mu(d\zeta)} { 1+|\zeta|^{2-2\gamma}}  <\infty$ and  the Fourier transform of $|\zeta|^{2\gamma} \mu(d\zeta)$ is  non-negative. The non-negativity condition of this measure  leads to a simple proof of the H\"older continuity in the space variable which avoids  the control of the norms of the increments
$D_w f- f$ and  $D_w f +D_{-w}f -2f$ (or their respective Fourier transforms). As an application, this method provides a direct proof of the H\"older continuity in the space variable, in the case of the Riesz kernel. However, this approach cannot be used to handle the H\"older continuity in the time variable.

To illustrate the scope of our results we provide some examples of covariance functions $f$ which satisfy our conditions.
 We consider first the Riesz and Bessel kernels.  Then we focus our attention to   fractional noises of the form
\[
f(x)=|x_1|^{2H_1-2} |x_2|^{2H_2-2} |x_3|^{2H_3-2} \,,
\]
where $H_1, H_2, H_3\in (1/2, 1)$ and $\bar\kappa:=\sum_{i=1}^3 H_i-2$. We show
(see Theorem 6.1) that,  under suitable assumptions on the initial conditions,
if $\kappa_i\in (0, \min(H_i-1/2, \bar\kappa))$ and $\kappa_0=\min(\kappa_1\,, \kappa_2\,, \kappa_3)$,  then    for any bounded rectangle $I\subset \mathbb{R}^3$, there is a finite random variable  $K$, depending on the $\kappa_i$'s, such that for all $s,t  \in [0,T]$ and for all $x,y\in I$
\begin{equation*}
|u(t,x)-u(s,y)|\leq K_I
(|x_1-y_1|^{\kappa_1}+|x_2-y_2|^{\kappa_2}+|x_3-y_3|^{\kappa_3}+|s-t|^{\kappa_0}).
\end{equation*}
To see if the H\"older exponents $\kappa_i$'s are    optimal or not, we investigate
a simple linear stochastic wave equation with additive noise. That means, we consider the equation
 \eref{our SPDE}  with $v_0=\bar v_0=0$, $b=0$ and $\sigma=1$. In this situation, we prove
(see Theorem 6.2 and a Kolomogorov lemma)  that for any bounded rectangle $I\subset \mathbb{R}^3$ and for any  $\kappa\in (0, \bar \kappa)$,  there is a random variable $K_{\kappa, I}$ such that for alll $t, s \in [0,T]$ and for all $x,y\in  I$
\begin{equation*}
|u(t,x)-u(s,y)|\leq K_{\kappa,I}
(|x_1-y_1|^{\kappa}+|x_2-y_2|^{\kappa}+|x_3-y_3|^{\kappa}+|s-t|^{\kappa}).
\end{equation*}
On the other hand, we obtain in Theorem 6.2 a lower bound on the variance of the increments of the process $u$ which shows that the exponent $\bar\kappa$ is optimal.
Notice that in nonlinear case (see Theorem 6.1),   we need the extra conditions  $\kappa_i<H_i-1/2$
for $i=1, 2, 3$. Also, this extra condition is not necessary if $H_i+H_j \le 3/2$ for any $i \not=j$ (for instance, if $ H_1=H_2=H_3=H\le 3/4$), and in this case $\kappa_i$ coincides with the optimal constant  $\bar \kappa$.
It is interesting to know if the additional conditions
$\kappa_i<H_i-1/2$ are  due to the nonlinearity or due to the limitation of our technique.

This paper is organized as follows. Section 2 contains some preliminary material about the  noise process in Equation \eref{our SPDE}.   We state our basic assumptions on the covariance function $f$ and prove  a general  Burkholder inequality.  We also give the  definition of the mild solution   and state the existence and uniqueness theorem of the solution to  Equation \eref{our SPDE}. Section 3 contains  two main  results  on the H\"older continuity in the space variables. One is based on the structure of the covariance function $f$ itself and the other one uses the Fourier transform of $f$. In Section 4 we prove a criterion for  the H\"older continuity in  the time variable.  Section 5 presents  some examples of the covariance function $f$ which satisfy the conditions given in our main theorems.   In the first example,   $f$ is  the convolution of a Schwartz function with a  Riesz kernel. In the second example,   $f$ is  the Riesz kernel, which is the case studied in \cite{dalang4}. In the third example,   $f$ is the Bessel kernel.
Section 6 deals with the case when the noise process is the formal derivative of a fractional Brownian field. The optimality of the   H\"older exponents
is  discussed in this section. Section 7 contains some lemmas which are used in the paper.

\section{Preliminaries}\label{preliminaries}
Consider a  non-negative and non-negative definite function $f$
which is a tempered distribution on $\RR^3$ (so $f$ is locally
integrable). We know that in this case $f$ is the Fourier transform
of a non-negative tempered measure $\mu$ on $\RR^3$ (called the
spectral measure of $f$). That is, for all $\varphi$ belonging to
the space $\mathcal{S}(\RR^3)$ of rapidly decreasing $C^{\infty}$
functions
\begin{equation}\label{def of spectral measure mu}
\int_{\RR^3}f(x)\varphi(x)dx=\int_{\RR^3}\mathcal{F}\varphi(\xi)\mu(d\xi),
\end{equation}
and there is an integer $m\geq 1$ such that
\begin{equation}
\int_{\RR^3}(1+|\xi|^2)^{-m}\mu(d\xi)<\infty\,,
\end{equation}
where we have denoted by $\mathcal{F}\varphi$ the Fourier transform of
$\varphi\in \mathcal{S}(\RR^3)$, given by
\begin{equation*}
\mathcal{F}\varphi(\xi)=\int_{\RR^3}\varphi(x)e^{- i\xi\cdot x}dx.
\end{equation*}

Let $G(t)$ be the fundamental solution of the 3-dimensional wave
equation $\displaystyle \frac{\partial^2u}{\partial t^2}=\Delta u$.  That is
\begin{equation} \label{fundamental sol}
G(t)=\frac{1}{4\pi t}\sigma_t
\end{equation}
for any $t>0$, where $\sigma_t$ denotes the uniform surface measure
(with total mass $4\pi t^2$) on the sphere of radius $t>0$. Sometimes it is
more convenient for us to use the Fourier transform of $G$ given by
\begin{equation}\label{FourierTransofG}
\mathcal{F}G(t)(\xi)=\frac{\sin(t|\xi|)}{|\xi|}\,, \quad t>0.
\end{equation}
Our basic assumption on $f$ is
\begin{equation} \label{H1}
\int_{|x|\leq 1}\frac{f(x)}{|x|}dx<\infty.
\end{equation}
We know (see, for instance, \cite{dalang1, Nualart Quer-Sardanyons})
that this is equivalent to
\begin{equation} \label{H2}
\int_{\RR^3}\frac{\mu(d\xi)}{1+|\xi|^2}<\infty.
\end{equation}
Notice that since we are in $\RR^3$,   condition \eref{H1} is satisfied if
there is a  $\kappa<2$ such that in a neighborhood of $ 0$, $f(x)\le C |x|^{-\kappa}$.

 Fix a time interval $[0,T]$. Let
$C_0^{\infty}([0,T] \times \RR^{3})$ be the space of infinitely
differentiable functions with compact support on $[0,T] \times
\R^3$. Consider a zero mean Gaussian family of random variables
$W=\{W(\varphi), \varphi \in C_0^{\infty}([0,T] \times\RR^{3})\}$,
defined in a complete probability space $(\Omega, \mathcal{F}, P)$,
with covariance
\begin{equation}\label{Prelim covariance}
\EE(W(\varphi)W(\psi))=\int_0^{T}\int_{\RR^3}\int_{\RR^3}\varphi(t,x)f(x-y)\psi(t,y)dxdydt.
\end{equation}

Walsh's classical theory of stochastic integration developed in
\cite{Walsh} can not be applied directly to the mild formulation of
Equation \eref{our SPDE}.  We shall use the stochastic integral defined
in   Section
2.3 of \cite{dalang5}.   We briefly summarize the construction and properties
of this integral.

Let $U$ be the completion of
$C_0^{\infty}(\mathbb{R}^3)$ endowed with the inner product
\begin{equation} \label{def U}
\langle\varphi,\psi\rangle_U=\int_{\RR^3}dx\int_{\RR^3}dy\varphi(x)f(x-y)\psi(y)=\int_{\RR^3}|\mathcal{F}(\varphi)(\xi)|^2\mu(d\xi),
\end{equation}
$\varphi,\psi \in C_0^{\infty}(\RR^3)$. Set $U_T=L^2([0,T];U)$.

The Gaussian family $W$ can be extended to the space $U_T$. We will also denote by
$W(g)$ the Gaussian random variable associated with an element $g
\in U_T$. Set $W_t(h)=W({\bf 1}_{[0,t]}h)$ for any $t\in [0,T]$ and
$h \in U$. Then $W=\{ W_t, t\in [0,T]\}$ is a cylindrical Wiener
process in the Hilbert space $U$. That is, for any $h \in U$,
$\{W_t(h), t\in [0,T]\}$ is a Brownian motion with variance
$t\| h\| ^2_U$, and
\begin{equation*}
\EE(W_t(h)W_s(g))=(s\wedge t)\langle h,g\rangle_U .
\end{equation*}
Let $\mathcal{F}_t$ be the $\sigma$-field generated by the random
variables $\{W_s(h), h\in U, 0\leq s \leq t\}$ and the $P$-null
sets. We define the predictable $\sigma$-field as the $\sigma$-field
in $\Omega\times [0,T]$ generated by the sets $\{(s,t]\times A,
0\leq s< t\leq T, A\in \mathcal{F}_s\}$. Then we can define the
stochastic integral of  a $U$-valued square integrable predictable
process $g \in L^2(\Omega\times[0,T];U)$ with respect to the
cylindrical Wiener process $W$, denoted by
\begin{equation*}
g\cdot W=\int_0^T\int_{\RR^3}g(t,x)W(dt,dx),
\end{equation*}
and we have the isometry property
\begin{equation}\label{isometry property}
\EE|g\cdot W|^2=\EE\int_0^T || g(t)||^2_U dt.
\end{equation}

The following  lemma   provides a sufficient
condition for a measure of the form $\varphi(x) G(t,dx)$ to be in
the space $U$.
\begin{lemma} \label{l.2.1}  Consider a Borel measurable function
$\varphi:\RR^3 \rightarrow \RR$, such that for some $t>0$,
\begin{equation}\label{|phi| int}
\int_{\RR^3} \int_{\RR^3} | \varphi(x) \varphi(y)| G(t,dx) G(t,dy)
f(x-y) <\infty.
\end{equation}
Then, $\varphi  G(t)$ belongs to $U$ and
\begin{equation}\label{phiG in U}
\| \varphi G(t) \|^2_U= \int_{\RR^3} \int_{\RR^3}   \varphi(x)
\varphi(y) G(t,dx) G(t,dy) f(x-y).
\end{equation}
Furthermore, when $\varphi$ is bounded,
\begin{equation}\label{F varphi G id}
\int_{\RR^3} \int_{\RR^3}   \varphi(x)
\varphi(y) G(t,dx) G(t,dy) f(x-y)=\int_{\RR^3}\left|\mathcal{F}\left(\varphi G(t)\right)(\xi)\right|^2\mu(d\xi).
\end{equation}
\end{lemma}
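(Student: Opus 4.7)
The plan is to define $\varphi G(t)$ as an element of $U$ via approximation by smooth compactly supported functions, combining a truncation (reducing to the case of bounded $\varphi$) with a mollification (producing $C_0^\infty(\mathbb{R}^3)$ approximants), and then passing to the limit in both the spatial and the Fourier form of the $U$-inner product of (2.8) to obtain (2.11) and (2.12) respectively.

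First assume $\varphi$ is bounded. Let $\{\psi_\epsilon\}\subset C_0^\infty(\mathbb{R}^3)$ be a standard nonnegative mollifier with $\int\psi_\epsilon=1$ and support in $B(0,\epsilon)$, and set $\phi_\epsilon=(\varphi G(t))*\psi_\epsilon$, which lies in $C_0^\infty(\mathbb{R}^3)$ since $\varphi G(t)$ is a bounded compactly supported measure. The Fourier representation in (2.8) gives
\[
\|\phi_\epsilon\|_U^2=\int_{\mathbb{R}^3}|\mathcal{F}(\varphi G(t))(\xi)|^2\,|\mathcal{F}\psi_\epsilon(\xi)|^2\,\mu(d\xi).
\]
The bound $|\mathcal{F}(\varphi G(t))(\xi)|\le\|\varphi\|_\infty|\sin(t|\xi|)|/|\xi|$, coming from (2.4), together with the elementary estimate $\sin^2(t|\xi|)/|\xi|^2\le C_t/(1+|\xi|^2)$ and condition (2.6), produces a $\mu$-integrable majorant. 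Since $|\mathcal{F}\psi_\epsilon|\le 1$ and $\mathcal{F}\psi_\epsilon\to 1$ pointwise as $\epsilon\to 0$, dominated convergence shows that $\{\phi_\epsilon\}$ is Cauchy in $U$ and that its $U$-limit, naturally identified with $\varphi G(t)$, has squared norm $\int|\mathcal{F}(\varphi G(t))(\xi)|^2\mu(d\xi)$, proving (2.12). To obtain (2.11) in the bounded case, rewrite the same norm via the spatial form of (2.8) and apply Fubini:
\[
\|\phi_\epsilon\|_U^2=\int_{\mathbb{R}^3}\int_{\mathbb{R}^3}\varphi(x)\varphi(y)\,(\eta_\epsilon*f)(x-y)\,G(t,dx)G(t,dy),\qquad \eta_\epsilon=\psi_\epsilon*\widetilde\psi_\epsilon,
\]
and pass to the limit.

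For general Borel measurable $\varphi$ satisfying (2.10), set $\varphi_n=\varphi\,\mathbf{1}_{\{|\varphi|\le n\}}$. Each $\varphi_n$ is bounded, so by the bounded case $\varphi_n G(t)\in U$ with norm given by (2.11). Applying the same formula to $\varphi_n-\varphi_m$,
\[
\|(\varphi_n-\varphi_m)G(t)\|_U^2=\int_{\mathbb{R}^3}\int_{\mathbb{R}^3}(\varphi_n-\varphi_m)(x)(\varphi_n-\varphi_m)(y)f(x-y)G(t,dx)G(t,dy),
\]
whose integrand is dominated by $4|\varphi(x)\varphi(y)|f(x-y)$ and vanishes pointwise as $n,m\to\infty$. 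Dominated convergence using (2.10) shows $\{\varphi_n G(t)\}$ is Cauchy in $U$; its limit is identified with $\varphi G(t)$, and taking $n\to\infty$ in (2.11) for $\varphi_n$ completes the proof.

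The main obstacle is the spatial passage $(\eta_\epsilon*f)(x-y)\to f(x-y)$ against the singular measure $G(t,dx)G(t,dy)$: Lebesgue-almost-everywhere convergence does not carry through a lower-dimensional support, and $f$ is only locally integrable. The remedy is to Fubini-ize the convolution and write $\|\phi_\epsilon\|_U^2=\int f(w)F_\epsilon(w)\,dw$, where $F_\epsilon$ is a mollification of a compactly supported signed measure with density (with respect to Lebesgue measure) controlled by a constant multiple of the density of $G(t)*G(t)$, which is $O(1/|z|)$ on $\{|z|\le 2T\}$. Dominated convergence in $w$ then applies, with the Dalang condition (2.5) furnishing the integrable dominant and reducing the problem to standard approximation of identity.
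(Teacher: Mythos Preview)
Your overall strategy---mollify $\varphi G(t)$ to land in $C_0^\infty$, pass to the limit in both representations of the $U$-norm, then truncate to handle unbounded $\varphi$---matches the paper's. The truncation step and the spatial-side passage you sketch in your final paragraph are essentially what the paper does (the latter is isolated there as a separate lemma, exploiting that $G(t)*G(t)$ has density $\frac{1}{8\pi|z|}\mathbf{1}_{[0,2t]}(|z|)$).

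There is, however, a genuine gap in your Fourier-side argument. The pointwise bound
\[
|\mathcal{F}(\varphi G(t))(\xi)|\le\|\varphi\|_\infty\,\frac{|\sin(t|\xi|)|}{|\xi|}
\]
is false for general bounded $\varphi$. Equation (2.4) gives this only when $\varphi\equiv 1$; the decay $|\xi|^{-1}$ comes from oscillatory cancellations over the full sphere, and multiplying by an arbitrary bounded $\varphi$ destroys them. All you can say a priori is $|\mathcal{F}(\varphi G(t))(\xi)|\le \|\varphi\|_\infty\, t$, which gives no $\mu$-integrable dominant. Consequently your dominated-convergence step for $\|\phi_\epsilon\|_U^2$ on the Fourier side, and hence your derivation of (2.12), is unjustified.

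The fix is to reverse the order: first run the spatial-side argument you outline at the end---writing $\|\phi_\epsilon\|_U^2=\int f(w)F_\epsilon(w)\,dw$ with $|F_\epsilon(w)|\le C|w|^{-1}\mathbf{1}_{\{|w|\le 3t\}}$ uniformly in $\epsilon$, and invoking (2.5)---to obtain a finite limit $\int\!\!\int \varphi(x)\varphi(y)f(x-y)G(t,dx)G(t,dy)$ and, by Fatou on the Fourier side, the finiteness of $\int|\mathcal{F}(\varphi G(t))|^2\,d\mu$. That finite quantity is then your legitimate dominant for the Fourier-side dominated convergence, yielding (2.12) and the Cauchy property of $\{\phi_\epsilon\}$ in $U$. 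This is precisely how the paper proceeds.
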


\begin{proof}
Suppose first  that $\varphi$ is bounded.
Then by   Lemma \ref{varphi G Fourier id},   the equality (\ref{F varphi G id}) holds   and  $\int_{\RR^3}|\mathcal{F}(\varphi G(t))(\xi)|^2\mu(d\xi) < \infty$.
Let $\psi$ be a nonnegative $C^{\infty}$
function on $\RR^3$ supported in the unit ball such that
$\int_{\mathbb{R}^3} \psi(x)dx=1$. Define $\psi_n(x)=n^3\psi(nx)$,
so $\left(\psi_n*(\varphi G(t))\right)(x)$ is in $C_0^{\infty}(\RR^3)$,
and we have
\begin{eqnarray*}
&&\int_{\RR^3}|\mathcal{F}\left(\psi_n*(\varphi G(t))-\varphi
G(t)\right)|^2\mu(d\xi)\\
&=&\int_{\RR^3} |(\mathcal{F}\psi_n)(\xi)-1|^2|\mathcal{F}(\varphi
G(t))(\xi)|^2\mu(d\xi) \to \infty
\end{eqnarray*}
as $n \to \infty$, by  the dominated convergence theorem. This
implies that  $\varphi G(t)$ is in $U$, and   \eref{phiG in U} holds.

In the general case, we consider  the sequence of functions
$\varphi_k(x)= \varphi(x) \mathbf{1} _{\{ |\varphi| \le k\}}$. Then
$\varphi_k(x)G(t,dx)$ belongs to $U$, and
\begin{eqnarray*}
&&\|\varphi_k(x)G(t,dx)-\varphi(x)G(t,dx)\|^2_U\\
&\leq&\int_{\RR^3}\int_{\RR^3}|\varphi_k(x)-\varphi(x)||\varphi_k(y)-\varphi(y)|G(t,dx)G(t,dy)f(x-y),
\end{eqnarray*}
which clearly goes to $0$ as $k$ goes to infinity, by dominated
convergence theorem.
\end{proof}

 For any $x\in \mathbb{R}^3$ we denote by $G(t,x-dy)$ the shifted measure $A\rightarrow G(t,x-A)$.
 Clearly Lemma \ref{l.2.1}  holds if we replace the kernel $G(t,dy)$ by the shifted kernel $G(t,x-dy)$.
 Applying  Lemma \ref{l.2.1}, we immediately get the following Burkholder inequality.

\begin{lemma}\label{Burkholder}
Let $Z=\{Z(t,x), (t,x)\in [0,T]\times \RR^3\}$ be a predictable
process such that for some $p \geq 2$ and $x\in \RR^3$,
\begin{equation*}
\EE\left(\int_0^t
\int_{\RR^3}\int_{\RR^3}|Z(s,x-y)Z(s,x-z)|G(s,dy)G(s,dz)f(y-z)ds\right)^{\frac{p}{2}}<\infty.
\end{equation*}
Then    the measure-valued predictable process $Z(s,y)G(s,x-dy)$  belongs $L^2(\Omega\times[0,T];U)$ and   there exists a positive constant $C_p$, depending only on $p$,
such that
\begin{eqnarray*}
&&\EE\left|\int_0^t\int_{\RR^3}Z(s,y)G(s,x-dy)W(ds,dy)\right|^p\\
&\leq& C_p \EE\left(\int_0^t
\int_{\RR^3}\int_{\RR^3}Z(s,x-y)Z(s,x-z)G(s,dy)G(s,dz)f(y-z)ds\right)^{\frac{p}{2}}.
\end{eqnarray*}
\end{lemma}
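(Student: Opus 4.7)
The plan is to combine Lemma \ref{l.2.1} (applied to the shifted kernel $G(s,x-dy)$) with the standard $p$-th moment Burkholder--Davis--Gundy inequality for Hilbert-space-valued stochastic integrals against the cylindrical Wiener process $W$ in $U$.

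First, for each $\omega\in\Omega$ and $s\in[0,T]$, I apply Lemma \ref{l.2.1} with $\varphi(\cdot)=Z(\omega,s,\cdot)$ and the translated kernel $G(s,x-dy)$ in place of $G(s,dy)$. The argument of Lemma \ref{l.2.1} carries over verbatim after the change of variables $y\mapsto x-y$, $z\mapsto x-z$, since $f$ depends only on differences. This gives, under the finiteness assumption of the lemma, that $Z(s,\cdot)\,G(s,x-d\cdot)$ belongs to $U$ with
\begin{equation*}
\|Z(s,\cdot)\,G(s,x-d\cdot)\|_U^2 = \int_{\RR^3}\!\!\int_{\RR^3} Z(s,x-y)\,Z(s,x-z)\,G(s,dy)\,G(s,dz)\,f(y-z).
\end{equation*}
Integrating this identity in $s$, taking expectations, and using Jensen's inequality (valid since $p/2\geq 1$), the hypothesis implies $\EE\int_0^t \|Z(s,\cdot)\,G(s,x-d\cdot)\|_U^2\,ds<\infty$. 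Together with the predictability inherited from $Z$, this places the process $s\mapsto Z(s,\cdot)\,G(s,x-d\cdot)\,\mathbf{1}_{[0,t]}(s)$ in $L^2(\Omega\times[0,T];U)$, so its stochastic integral against $W$ is well-defined in the sense of \eref{isometry property}.

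The conclusion then follows from the Hilbert-space BDG inequality
\begin{equation*}
\EE\left|\int_0^T g(s)\cdot dW\right|^p \leq C_p\,\EE\left(\int_0^T \|g(s)\|_U^2\,ds\right)^{p/2},\qquad p\geq 2,
\end{equation*}
valid for predictable $g\in L^2(\Omega\times[0,T];U)$, which is the standard $p$-th moment extension of the $L^2$-isometry \eref{isometry property} (see Section 2.3 of \cite{dalang5}), applied to $g(s)=Z(s,\cdot)\,G(s,x-d\cdot)\,\mathbf{1}_{[0,t]}(s)$ together with the $U$-norm identity displayed above. The only real bookkeeping point is the translation of Lemma \ref{l.2.1} to the shifted kernel; everything else is a direct application of that lemma and the Hilbert-space BDG bound, and I do not expect any substantive obstacle.
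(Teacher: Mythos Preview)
Your proposal is correct and matches the paper's approach exactly: the paper simply remarks that Lemma~\ref{l.2.1} carries over to the shifted kernel $G(t,x-dy)$ and then states that ``Applying Lemma~\ref{l.2.1}, we immediately get the following Burkholder inequality,'' without spelling out any further details. Your write-up is just a fuller version of that one-line justification, making explicit the $U$-norm identity, the Jensen step for $L^2(\Omega\times[0,T];U)$-membership, and the Hilbert-space BDG inequality.
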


If we have
\begin{equation}
\sup_{(t,x)\in [0,T]\times \RR^3}\EE|Z(t,x)|^p< \infty\,,
\end{equation}
then an application of H\"older inequality yields
\begin{eqnarray*}
&&\EE\left|\int_0^t\int_{\RR^3}Z(s,y)G(s,x-dy)W(ds,dy)\right|^p\\
&\leq& C_p\int_0^t ds \left(\sup_{x \in \RR^3}\EE
|Z(s,x)|^p\right)\left(\int_{\RR^3}\int_{\RR^3}f(y-z)G(s,dy)G(s,dz)\right)^{\frac{p}{2}}.
\end{eqnarray*}

By Lemma \ref{conv of G G lemma}, the above inequality can also be
written as
\begin{eqnarray*}
&&\EE\left|\int_0^t\int_{\RR^3}Z(s,y)G(s,x-dy)W(ds,dy)\right|^p\\
&\leq& C_p\int_0^t ds \left(\sup_{x \in \RR^3}\EE
|Z(s,x)|^p\right)\left(\int_{|x|\leq
2s}\frac{f(x)}{|x|}dx\right)^{\frac{p}{2}}.
\end{eqnarray*}

Using the above notion of stochastic integral one can introduce the
following definition:

\begin{definition}\label{def of mild solution}
A real-valued predictable stochastic process $u=\{u(t, x),0 \leq t
\leq T\,, x\in \RR^3\}$ is a mild random field solution of \eref{our
SPDE} if for all $t\in (0, T]$, $x\in \RR^3$,
\begin{eqnarray*}
 u(t, x)
&=&\frac{d}{dt} \left(G(t)*v_0\right)(x)+
\left(G(t)*\bar{v}_0\right)(x) \\
&&+\int_0^t\int_{\RR^3}G(t-s,x-dy)\si(s,y,u(s,y)) W(ds,dy)\\
&&+\int_0^tG(t-s)*\left(b(s,\cdot,u(s,\cdot))\right)(x)ds \,\ \ \ \
a.s.
\end{eqnarray*}
\end{definition}

Consider the following condition.

\noindent (\textbf{H}) The coefficients $\sigma $ and $b$ satisfy
\begin{eqnarray*}
|\sigma(t,x,u)-\sigma(t,y,v)| &\leq& C (|x-y|+|u-v|)\\
|\sigma(s,x,u)|&\leq& C(1+|u|)
\end{eqnarray*}
and
\begin{eqnarray*}
|b(t,x,u)-b(t,y,v)| &\leq& C (|x-y|+|u-v|)\\
|b(s,x,u)|&\leq& C(1+|u|)
\end{eqnarray*}
for any $x,y \in \R^3$, $s,t \in [0,T]$ and $u,v \in \RR$.

Then one can prove the existence and uniqueness of the solution to
\eref{our SPDE} in exactly the same way as in \cite[Theorem
4.3]{dalang5}.
\begin{theorem}\label{existence and uniqueness THM}
Suppose the condition \eref{H1} holds, and $\sigma $, $b$ satisfy  the
condition (\textbf{H}). Let $v_0 \in C^1(\RR^3)$ such that $v_0$ and
$\nabla v_0$ are bounded and $\bar{v}_0$ is bounded and continuous.
Then there exists a unique mild random field solution $u$ to
\eref{our SPDE} such that for all $p\geq 1$,
\begin{equation}
\sup_{(t,x) \in [0,T]\times\mathbb{R}^3}\EE|u(t,x)|^p< \infty.
\label{moments estimate}
\end{equation}
\end{theorem}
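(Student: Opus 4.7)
The plan is to adapt the standard Picard iteration scheme for stochastic evolution equations to this setting, mirroring Dalang--Mueller~\cite{dalang5}. First I would check that the deterministic part
\[
I_0(t,x) := \frac{d}{dt}\bigl(G(t) * v_0\bigr)(x) + \bigl(G(t)*\bar v_0\bigr)(x)
\]
is well defined and bounded on $[0,T]\times\RR^3$ under the hypotheses on $v_0,\bar v_0$ (by Kirchhoff's formula). Then define inductively $u^0(t,x) = I_0(t,x)$ and
\[
u^{n+1}(t,x) = I_0(t,x) + \int_0^t\!\!\int_{\RR^3} G(t-s,x-dy)\,\sigma(s,y,u^n(s,y))\,W(ds,dy) + \int_0^t G(t-s)*b(s,\cdot,u^n(s,\cdot))(x)\,ds.
\]
At each step one verifies that $u^n$ admits a jointly measurable, predictable version, which together with the linear growth of $\sigma$ makes $\sigma(s,y,u^n(s,y))\,G(t-s,x-dy)$ an element of $L^2(\Omega\times[0,T];U)$ via Lemma \ref{l.2.1} and the identity $\int_{\RR^3}\!\int_{\RR^3}f(y-z)G(s,dy)G(s,dz) = \int_{|x|\le 2s} f(x)/|x|\,dx$, which is finite on $[0,T]$ thanks to \eref{H1}.

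The next step is the uniform $p$-th moment bound. Setting $M_n(t) := \sup_{x\in\RR^3} \EE|u^n(t,x)|^p$, I apply the Burkholder inequality of Lemma~\ref{Burkholder} combined with H\"older's inequality (as in the display following that lemma) and the linear growth condition in (\textbf{H}) to get, for some constant $C$ depending only on $p,T$ and the bounds on $\sigma,b,v_0,\bar v_0$,
\[
M_{n+1}(t) \le C + C\int_0^t \bigl(1+M_n(s)\bigr)\,\varphi(t-s)\,ds,
\]
where $\varphi(r) = \bigl(\int_{|x|\le 2r} f(x)/|x|\,dx\bigr)^{p/2} + r^{p-1}\bigl(\int_{|x|\le 2r}f(x)/|x|\,dx\bigr)^{p/2}$ is bounded on $[0,T]$. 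A Gronwall-type iteration then shows $\sup_{n}\sup_{t\in[0,T]}M_n(t) < \infty$.

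For convergence of $(u^n)$, the Lipschitz part of (\textbf{H}) together with the same Burkholder estimate yields
\[
\sup_{x\in\RR^3}\EE\bigl|u^{n+1}(t,x) - u^n(t,x)\bigr|^p \le C\int_0^t \sup_{y\in\RR^3}\EE\bigl|u^n(s,y)-u^{n-1}(s,y)\bigr|^p\,\varphi(t-s)\,ds.
\]
Iterating this inequality produces a summable bound, hence $u^n$ is Cauchy in $L^p(\Omega)$ uniformly in $(t,x)$. The limit $u$ is taken as the solution; one then verifies it satisfies Definition~\ref{def of mild solution} by passing to the $L^p$ limit inside the stochastic and Lebesgue integrals (using the isometry \eref{isometry property} and dominated convergence). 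Uniqueness follows by applying the same Lipschitz--Burkholder estimate to the difference of two solutions and invoking Gronwall's lemma.

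The main technical obstacle is the first one: justifying the stochastic integral at each Picard step, i.e.\ verifying predictability of a measurable version and confirming that the measure-valued integrand $\sigma(s,y,u^n(s,y))G(t-s,x-dy)$ lies in $L^2(\Omega\times[0,T];U)$. Once Lemma~\ref{l.2.1} is invoked and the uniform moment bound is in hand, the rest is a routine Gronwall/Picard argument, which is why I would ultimately appeal to the detailed proof of \cite[Theorem 4.3]{dalang5}, indicating only the modifications needed because $\sigma$ and $b$ here depend on the spatial variable $x$ as well (a harmless change since the Lipschitz and linear growth constants are uniform in $x$).
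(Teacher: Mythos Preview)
Your proposal is correct and aligns exactly with the paper's treatment: the paper does not give an independent proof of this theorem but simply states that it ``can be proved in exactly the same way as in \cite[Theorem 4.3]{dalang5}.'' Your sketch of the Picard iteration, the Burkholder--H\"older moment bound, and the Gronwall argument is precisely the content of that reference, and your closing remark about appealing to \cite[Theorem 4.3]{dalang5} with minor modifications for the $x$-dependence of $\sigma$ and $b$ matches the paper's approach verbatim.
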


Along the paper, $C$ will denote a generic constant which may change
from line to line.

\section{H\"older continuity in the space variable}
In this section we will prove the following two theorems which are the
main results on  the H\"older continuity of the solution of Equation
(\ref{our SPDE}) in the space variable.
\begin{theorem}\label{space Holder Thm no Fourier}
Let $u$ be the solution to Equation \eref{our SPDE}.  Assume
the following conditions.
\begin{description}
\item{(a)}
The coefficients  $\sigma $ and $b$ satisfy  condition (\textbf{H}).
\item{(b)}
$v_0 \in C^2(\RR^3)$, $v_0$, $\nabla v_0$ and $\bar{v}_0$ are
bounded and
 $\Delta v_0$ and $\bar{v}_0$ are H\"older
  continuous  of orders $\gamma_{1}$ and $\gamma_{2}$
  respectively, $\gamma_{1}, \gamma_{2} \in (0,1]$.
\item{(c)} The function $f$ satisfies  condition (\ref{H1}) and for some
$\gamma \in (0,1]$ and $\gamma^{\prime} \in (0,2]$ we have for all $w\in \RR^3$
\begin{equation}\label{sp cond 1}
\int_{|z| \le 2T} \frac {|f(z+w)-f(z)|}{|z|} dz \leq  C |w|
^{\gamma}
\end{equation}
and
\begin{equation}\label{sp cond 2}
\int_{|z| \le 2T}   \frac {|f(z+w)+f(z-w)-2f(z)|}{|z|}dz \leq C
|w|^{\gamma^{\prime}}.
\end{equation}
\end{description}
Set $\kappa_1= \min(\gamma_1,\gamma_2,\gamma,\frac{\gamma^{\prime}}{2})$. Then for any $q\ge 2$,   there exists a constant $C$ such that
\begin{equation*}
\sup_{t \in [0,T]}\EE|u(t,x)-u(t,y)|^q\leq  C |x-y|^{q\kappa_1}
\end{equation*}
for any $x,y \in \mathbb{R}^3$.
\end{theorem}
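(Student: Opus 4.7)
The plan is to decompose $u(t,x)-u(t,y)$ according to the four terms in the mild formulation (Definition \ref{def of mild solution}), estimate each separately, and close the resulting integral inequality with Gronwall's lemma. Write $h=y-x$ and $H(t,h):=\sup_{|x'-y'|\le|h|}\mathbb{E}|u(t,x')-u(t,y')|^q$.

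For the two initial-data terms I would use Kirchhoff's representation $(G(t)\ast\bar v_0)(x)=\frac{t}{4\pi}\int_{S^2}\bar v_0(x+t\xi)\sigma(d\xi)$, which is immediately $\gamma_2$-H\"older in $x$, together with the divergence-theorem rewriting $\frac{d}{dt}(G(t)\ast v_0)(x)=\frac{1}{4\pi}\int_{S^2}v_0(x+t\xi)\sigma(d\xi)+\frac{1}{4\pi t}\int_{|y|\le t}\Delta v_0(x+y)\,dy$; the H\"older regularity of $\Delta v_0$ then yields a $\gamma_1$-H\"older bound for the second summand, while the first is Lipschitz in $x$ since $\nabla v_0$ is bounded.

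For the stochastic convolution $I_\sigma(t,x)$, Burkholder's inequality (Lemma \ref{Burkholder}) reduces matters to bounding $\mathbb{E}(\int_0^t X_s\,ds)^{q/2}$, where $X_s:=\|\sigma(s,\cdot,u(s,\cdot))\cdot[G(t-s,x-\cdot)-G(t-s,y-\cdot)]\|_U^2$. After the changes of variables that place the $G$-kernels at the origin, expanding the $U$-norm squared gives $X_s=X_s^{(1)}+X_s^{(2)}$, where the diagonal piece has integrand $[\phi_x(w_1)-\phi_y(w_1)][\phi_x(w_2)-\phi_y(w_2)]f(w_1-w_2)$ with $\phi_z(w):=\sigma(s,z-w,u(s,z-w))$, and $X_s^{(2)}$ collects the cross terms. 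Symmetrising $X_s^{(2)}$ in $(w_1,w_2)$ using $f(-z)=f(z)$ splits it into a clean second-difference term weighted by $\phi_x(w_1)\phi_y(w_2)+\phi_x(w_2)\phi_y(w_1)$ (governed by assumption (\ref{sp cond 2})) plus an antisymmetric first-difference term whose coefficient $\phi_x(w_1)\phi_y(w_2)-\phi_x(w_2)\phi_y(w_1)$ carries an extra Lipschitz factor of size $|h|$ from the spatial Lipschitz continuity of $\sigma$, so that assumption (\ref{sp cond 1}) combines with this gain. Bounding $|X_s|^{q/2}$ by Jensen against the positive kernels $f(w_1-w_2)G(dw_1)G(dw_2)$, $|f(\cdot+h)+f(\cdot-h)-2f(\cdot)|\,G(dw_1)G(dw_2)$, and $|f(\cdot+h)-f(\cdot)|\,G(dw_1)G(dw_2)$, then invoking the moment bound of Theorem \ref{existence and uniqueness THM} together with the $G\ast G$ identity of Section 7 to recast each double $G$-integral as $C\int_{|z|\le 2(t-s)}F(z)/|z|\,dz$, yields $\mathbb{E}|X_s|^{q/2}\le C|h|^{q\kappa_1}+CH(s,h)$. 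A parallel but simpler argument, using $\int G(t-s,dw)=t-s$ and the Lipschitz property of $b$, handles the drift convolution.

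Collecting the four contributions gives $H(t,h)\le C|h|^{q\kappa_1}+C\int_0^t H(s,h)\,ds$, and Gronwall's lemma closes the estimate. The main obstacle is the symmetrisation producing the second-difference kernel in $X_s^{(2)}$: the raw cross term contains only first differences of $f$, and without the $w_1\leftrightarrow w_2$ symmetrisation one would obtain only the weaker exponent $\gamma/2$ in place of $\gamma'/2$. Handling the random coefficient $\sigma(s,\cdot,u)$ cleanly via Jensen against positive measures of the form $f\cdot G\cdot G$ and its shifted variants, thereby avoiding any Fourier computation, is the second delicate point.
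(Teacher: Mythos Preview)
Your plan is correct and follows essentially the same route as the paper: decompose into the four mild-solution terms, handle the initial data via Kirchhoff's formula, use Burkholder for the stochastic term and transfer the $G$-increments to increments of $f$ and of $\sigma$, then close by Gronwall. Your algebraic splitting of the cross terms is a minor variant of the paper's. The paper writes $Q=Q_1+Q_2+Q_3+Q_4$ with $Q_1$ carrying $\Sigma_{x,y}\Sigma_{x,y}\cdot f$, $Q_2,Q_3$ carrying (first difference of $f$)$\cdot\Sigma_x\Sigma_{x,y}$, and $Q_4$ carrying (second difference of $f$)$\cdot\Sigma_x\Sigma_x$; you instead split $X_s^{(2)}$ by $(w_1,w_2)$-symmetrisation into (second difference of $f$)$\cdot(\phi_x\phi_y+\phi_y\phi_x)$ and $(f(\cdot+h)-f(\cdot-h))\cdot(\phi_x\phi_y-\phi_y\phi_x)$. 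Both regroupings of the same quantity work, and the key cross-term estimate uses the same weighted Young/Jensen device.

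Two small imprecisions to tighten. First, the antisymmetric factor $\phi_x(w_1)\phi_y(w_2)-\phi_y(w_1)\phi_x(w_2)=\phi_x(w_1)[\phi_y-\phi_x](w_2)-\phi_x(w_2)[\phi_y-\phi_x](w_1)$ does not carry a pure $|h|$ factor; via the Lipschitz condition on $\sigma$ it contributes $|h|+|u(s,x-\cdot)-u(s,y-\cdot)|$, and the second piece is precisely what re-enters the Gronwall loop (the paper handles the analogous $\Sigma_x\Sigma_{x,y}$ terms by the weighted inequality $ab\le\frac12|w|^{\gamma}a^2+\frac12|w|^{-\gamma}b^2$, which you will need here as well to obtain the exponent $q\gamma$ rather than $q\gamma/2$). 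Second, the quantity to bound is $\EE\big|\int_0^t X_s\,ds\big|^{q/2}$, not $\int_0^t\EE|X_s|^{q/2}\,ds$; use H\"older (or Minkowski) in the $s$-variable before applying Jensen against the positive kernels, as the paper does.
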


\begin{proof}
   Let us assume that   $|x-y|\leq 1$ and set $x-y=w$. Fix $q \geq 2$. Then we have
\begin{eqnarray*}
\EE\left|u(t,x)-u(t,y)\right|^q
&\leq&C\Bigg\{\EE\Big|\int_0^t\int_{\mathbb{R}^3}G(t-s,x-dz)\sigma\left(s,z,u(s,z)\right)W(ds,dz)\\
&&\ \ -\int_0^t\int_{\mathbb{R}^3}G(t-s,y-dz)\sigma\left(s,z,u(s,z)\right)W(ds,dz)\Big|^q\\
&&+\EE\Big|\int_0^tG(t-s)*b\left(s,\cdot,u(s,\cdot)\right)(x)ds \\
&& \ \ -\int_0^tG(t-s)*b\left(s,\cdot,u(s,\cdot)\right)(y)ds\Big|^q\\
&&+\left|\frac{d}{dt}\left(G(t)*v_0\right)(x)-\frac{d}{dt}\left(G(t)*v_0\right)(y)\right|^q\\
&&+\left|\left(G(t)*\bar{v}_0\right)(x)-\left(G(t)*\bar{v}_0\right)(y)\right|^q\Bigg\}\\
&:=&C(I_1+I_2+I_3+I_4).
\end{eqnarray*}

For $I_4$, since $\bar{v}_0$ is H\"older continuous with exponent
$\gamma_{2}$ we get
\begin{eqnarray}
\left |G(t)*\bar{v}_0(x)-G(t)*\bar{v}_0(y)\right |^q
&\leq& \left|\int_{\mathbb{R}^3}G(t,dz)\left|\bar{v}_0(x-z)-\bar{v}_0(y-z)\right|\right|^q \nonumber\\
&\leq & C|w|^{\gamma_{2}q}\left|\int_{\mathbb{R}^3}G(t,dz)\right|^q
\le C|w|^{\gamma_{2}q}. \label{I_4 estimate}
\end{eqnarray}

For $I_3$,  we  use the identity (see, for instance, \cite{Sogge})
\begin{equation}
\frac{d}{dt}G(t)*v_0(x)=\frac{1}{t}(v_0*G(t))(x)+\frac{1}{4\pi}\int_{|y|<1}(\Delta
v_0)(x+ty)dy.
\end{equation}
 Then,  since $\Delta v_0$ is H\"older continuous with exponent
$\gamma_{1}$, we get
\begin{eqnarray}
\left|\frac{d}{dt}\left(G(t)*v_0\right)(x)-\frac{d}{dt}\left(G(t)*v_0\right)(y)\right|^q
&\leq& \frac{C}{t^q}\left|\int_{\mathbb{R}^3}G(t,dz)\left(v_0(x-z)-v_0(y-z)\right)\right|^q \nonumber\\
&& + C \left  |\int_{|z|<1}\left(\Delta v_0(x+tz)-\Delta
v_0(y+tz)\right)dz\right|^q\nonumber \\
&\le&  C|w|^{\gamma_{1}q}.\label{I3estimate}
\end{eqnarray}

For $I_2$, we use the Lipschitz condition on $b$ and  H\"older's
inequality to get
\begin{eqnarray}
I_2 &=& \EE\Big|\int_0^t\int_{\mathbb{R}^3}G(t-s,dz)b\left(s,x-z,u(s,x-z)\right)ds \nonumber \\
\nonumber&&-\int_0^t\int_{\mathbb{R}^3}G(t-s,dz)b\left(s,y-z,u(s,y-z)\right)ds\Big|^q\\
\nonumber&\leq& C\EE\left(\int_0^t\int_{\mathbb{R}^3}G(t-s,dz)\left(|w|+\left|u(s,x-z)-u(s,y-z)\right|\right)ds\right)^q\\
\nonumber&\leq&
C\EE\left(\int_0^t\int_{\mathbb{R}^3}G(t-s,dz)ds\right)^{q-1}
\Bigg(\int_0^t\int_{\mathbb{R}^3}G(t-s,dz)|w|^q ds\\
\nonumber&&+\int_0^t\int_{\mathbb{R}^3}G(t-s,dz)
\left|u(s,x-z)-u(s,y-z)\right|^q
ds\Bigg)\\
&\leq & C|w|^q+C\int_0^t
ds\sup_{z_1-z_2=w}\EE\left|u(s,z_1)-u(s,z_2)\right|^q.
\label{I2estimate}
\end{eqnarray}

For $I_1$, we apply the Burkholder inequality in
Lemma \ref{Burkholder} to get
\begin{eqnarray*}
I_{1}  &\leq&C
\EE\Big|\int_0^t\int_{\mathbb{R}^3}\int_{\mathbb{R}^3}\sigma
\left(s,\xi,u(s,\xi)\right)  f(\xi-\eta)\sigma\left(s,\eta,u(s,\eta)\right ) \\
&&\times
\left( G(t-s,x-d\xi)- G(t-s,y-d\xi) \right)\left(G(t-s,x-d\eta)-G(t-s,y-d\eta)\right)
 ds\Big|^{\frac{q}{2}}\\
&:=&C \EE |Q|^{\frac{q}{2}}.
\end{eqnarray*}
 The main idea to estimate the above quantity is to  transfer the increments of $G$ to
 increments of $f$ and $\sigma$. We introduce the following
notation
\begin{eqnarray}\label{Sigma x}
\Sigma_{x}(s,\xi)&=&\sigma\left(s,x-\xi,u(s,x-\xi)\right) \\
\label{Sigma x,y}
\Sigma_{x,y}(s,\xi)&=&\sigma\left(s,x-\xi,u(s,x-\xi)\right)-\sigma\left(s,y-\xi,u(s,y-\xi)\right).
\end{eqnarray}
Define
\begin{eqnarray}\label{h_1}
h_1&=&f(\eta-\xi)\Sigma_{x,y}(s,\xi)\Sigma_{x,y}(s,\eta),\\
\label{h_2}
h_2&=&\left(f(\eta-\xi+w)-f(\eta-\xi)\right)\Sigma_{x}(s,\xi)\Sigma_{x,y}(s,\eta), \\
\label{h_3}
h_3&=&\left(f(\eta-\xi-w)-f(\eta-\xi)\right)\Sigma_{x}(s,\eta)\Sigma_{x,y}(s,\xi),\\
\label{h_4}
h_4&=&\left(2f(\eta-\xi)-f(\eta-\xi+w)-f(\eta-\xi-w)\right)\Sigma_{x}(s,\xi)\Sigma_{x}(s,\eta).
\end{eqnarray}
and
\begin{equation}\label{Qi}
Q_{i} =\int_0^t\int_{\mathbb{R}^3}\int_{\mathbb{R}^3}
G(t-s,d\xi)G(t-s,d\eta)h_i ds, \quad i=1,2,3,4.
\end{equation}
Then by direct calculation, we can verify that  $Q=\sum_{i=1}^4
Q_{i}$. To estimate $\EE |Q|^{\frac{q}{2}}$, we need to estimate
$\EE|Q_{i}  |^{\frac q2} $ for $i=1, \dots, 4$. For
$\EE|Q_{1}|^{\frac{q}{2}}$, by the assumptions on $\sigma$, using
 H\"older's inequality and Lemma \ref{conv of G G lemma} we have
\begin{eqnarray*}
\EE\left|Q_{1}\right|^{\frac{q}{2}} &=&\EE\Big|\int_0^t
ds\int_{\mathbb{R}^3}\int_{\mathbb{R}^3}
G(t-s,d\xi)G(t-s,d\eta)f(\eta-\xi)\\
&&\times\left(\sigma\left(s,x-\xi,u(s,x-\xi)\right)-\sigma\left(s,y-\xi,u(s,y-\xi)\right)\right)\\
&&\times\left(\sigma\left(s,x-\eta,u(s,x-\eta)\right)-
\sigma\left(s,y-\eta,u(s,y-\eta)\right)\right)\Big|^{\frac{q}{2}}\\
&\leq& C
\EE\Big|\int_0^t ds\int_{\mathbb{R}^3}\int_{\mathbb{R}^3} G(t-s,d\xi)G(t-s,d\eta)f(\eta-\xi)\\
&&\times\left(|w|+|u(s,x-\xi)-u(s,y-\xi)|\right)\\
&&\times\left(|w|+|u(s,x-\eta)-u(s,y-\eta)|\right)\Big|^{\frac{q}{2}}\\
&\leq&C \int_0^t ds \left( \int_{\mathbb{R}^3}\int_{\mathbb{R}^3}
 G(t-s,d\xi)G(t-s,d\eta)f(\eta-\xi)\right)^{\frac{q}{2}}\\
&&\times \left(|w|^{q} +\sup_{z_1-z_2=w}\EE|u(s,z_1)-u(s,z_2)|^{q
}\right)\\
&=&C \int_0^t ds \left( \int_{\mathbb{R}^3}
 G(t-s)*G(t-s)(dz)f(z)\right)^{\frac{q}{2}}\\
&&\times \left(|w|^{q} +\sup_{z_1-z_2=w}\EE|u(s,z_1)-u(s,z_2)|^{q
}\right)\\
&\leq&C\int_0^t ds \left( \int_{|z|\leq 2T}
 \frac{f(z)}{|z|}dz\right)^{\frac{q}{2}} \left(|w|^{q} +\sup_{z_1-z_2=w}\EE|u(s,z_1)-u(s,z_2)|^{q
}\right).
\end{eqnarray*}
By the condition (\ref{H1}), we get
\begin{equation}\label{Q1}
\EE|Q_1|^{\frac{q}{2}}\leq C|w|^q+C\int_0^t ds
\sup_{z_1-z_2=w}\EE|u(s,z_1)-u(s,z_2)|^{q}.
\end{equation}
For $\EE|Q_{2}|^{\frac{q}{2}}$,  we write  $f(\eta-\xi
+w)-f(\eta-\xi)=\Phi_1(\eta-\xi,w)$ and using the inequality
 $ab\leq \frac{a^2+b^2}{2}$ we obtain
\begin{eqnarray*}
\EE|Q_2|^{\frac{q}{2}}&\leq& C \EE \left(\int_0^t
\int_{\RR^3}\int_{\RR^3}   |\Phi_1(\eta-\xi,w)| |\Sigma_x(s,\xi)||\Sigma_{x,y}(s,\eta)|G(t-s,d\xi)G(t-s,d\eta)ds\right)^{\frac{q}{2}}\\
&\leq&C  \EE\left(\int_0^t\int_{\RR^3}\int_{\RR^3} |w|^\gamma|\Phi_1(\eta-\xi,w)||\Sigma_x(s,\xi)|^2G(t-s,d\xi)G(t-s,d\eta)ds\right)^{\frac{q}{2}}\\
&&+C\EE\left(\int_0^t\int_{\RR^3}\int_{\RR^3} \frac {|\Phi_1(\eta-\xi,w)|} {|w|^\gamma}|\Sigma_{x,y}(s,\eta)|^2G(t-s,d\xi)G(t-s,d\eta)ds\right)^{\frac{q}{2}}\\
&:=&C(Q_{2,1}+Q_{2,2}).
\end{eqnarray*}
Applying the condition \eref{sp cond 1}, Lemma \ref{conv of G G lemma}
and  H\"older's inequality yields
\begin{eqnarray*}
Q_{2,1}&\leq&|w| ^{\frac {q\gamma}2}\left(\int_0^t\int_{\RR^3}\int_{\RR^3} | \Phi_1(\eta-\xi, w)|G(t-s,d\xi)G(t-s,d\eta)ds\right)^{\frac{q}{2}}\sup_{s,x}\EE|\Sigma_{x}(s,\xi)|^q\\
&\leq&C|w|^{\frac{q\gamma}2}\left(\int_0^t\int_{|z|\leq 2T} \frac {
|f(z+w)-f(z)|}{ |z|} dzds\right)^{\frac{q}{2}}\leq C|w|^{q\gamma}.
\end{eqnarray*}
For the second term we obtain
\begin{eqnarray*}
Q_{2,2}&\leq&C |w| ^{-\frac {q\gamma}2} \EE\Big(\int_0^t
\int_{\RR^3}\int_{\RR^3}   | \Phi_1(\eta-\xi,w)|\left(|w|^2+\left|u(s,x-\xi)-u(s,y-\xi)\right|^2\right)\\
&&\times G(t-s,d\xi)G(t-s,d\eta)ds\Big)^{\frac{q}{2}}\\
&\leq&C|w|^{q-\frac {q\gamma}2} \left(\int_0^t
\int_{|z|\leq 2T}    \frac {| f(z+w)-f(z)|}{|z|}  dzds\right)^{\frac{q}{2}}\\
&&+C|w|^{-\frac {q\gamma}2}  \left(\int_{|z|\leq2T}    \frac {| f(z+w)-f(z)|}{|z|}dz\right)^{\frac{q}{2}}\int_0^t\sup_{z_2-z_1=w}\EE|u(s,z_1)-u(s,z_2)|^qds\\
&\leq&C|w|^q+C\int_0^t\sup_{z_2-z_1=w}\EE|u(s,z_1)-u(s,z_2)|^qds.
\end{eqnarray*}
So we conclude that
\begin{equation}\label{Q2}
\EE|Q_2|^{\frac{q}{2}}\leq
C|w|^{q\gamma}+C\int_0^t\sup_{z_2-z_1=w}\EE|u(s,z_1)-u(s,z_2)|^qds.
\end{equation}
The term $\EE|Q_3|^{\frac{q}{2}}$ can be treated in the same way and we have
\begin{equation}\label{Q3}
\EE|Q_3|^{\frac{q}{2}}\leq C
|w|^{q\gamma}+C\int_0^t\sup_{z_2-z_1=w}\EE|u(s,z_1)-u(s,z_2)|^qds.
\end{equation}
For $\EE|Q_4|^{\frac{q}{2}}$,  we set $\Phi_2(\eta-\xi,w)=f(\eta-\xi+w) +f(\eta-\xi-w) -2f(\eta-\xi)$, and using the  assumption
on $\si$,  condition \eref{sp
cond 2},  H\"older's inequality and the moments
estimate \eref{moments estimate}, we have
\begin{eqnarray*}
\EE|Q_4|^{\frac{q}{2}}&=&\EE\left(\int_0^t
\int_{\RR^3}\int_{\RR^3} |\Phi_2(\eta- \xi,w)| |\Sigma_x(s,\xi)\Sigma_x(s,\eta)|G(t-s,d\xi)G(t-s,d\eta)ds\right)^{\frac{q}{2}}\\
&\leq& \left(\int_0^t\int_{\RR^3}\int_{\RR^3}  |\Phi_2(\eta- \xi,w)| G(t-s,d\xi)G(t-s,d\eta)ds\right)^{\frac{q}{2}}\sup_{s,\xi,\eta}\EE\left(|\Si_x(s,\xi)||\Sigma_x(s,\eta)|\right)^{\frac{q}{2}}\\
&\leq&C \left(\int_0^t \int_{|z|\leq 2T}     \frac {|f(z+w) +f(z-w)
-2f(z)|} {|z|}   dzds\right)^{\frac{q}{2}}\leq C
|w|^{\frac{q\gamma^{\prime}}{2}}.
\end{eqnarray*}
Combining the above expression   with \eref{Q1}, \eref{Q2} and \eref{Q3},
we can write
\begin{equation}
I_1\leq C(|w|^{\gamma
q}+|w|^{\frac{\gamma^{\prime}q}{2}})+C\int_0^t
\sup_{z_2-z_1=w}\EE|u(s,z_1)-u(s,z_2)|^qds.
\end{equation}
The estimates for $I_i$, $i=1,2,3,4$, lead to
\begin{eqnarray*}
&&\sup_{z_1-z_2=w}\EE|u(t,z_1)-u(t,z_2)|^q\\
&\leq&C|w|^{q\min(\gamma_1,\gamma_2,\gamma,\frac{\gamma^{\prime}}{2})}+C\int_0^t
ds \sup_{z_1-z_2=w}|u(s,z_1)-u(s,z_2)|^q.
\end{eqnarray*}
An application of Gronwall's lemma yields
\begin{equation}
\EE|u(t,x)-u(t,y)|^q\leq C
|x-y|^{q\min(\gamma_1,\gamma_2,\gamma,\frac{\gamma^{\prime}}{2})}
\end{equation}
for any $x$ and $y$ in $\RR^3$ such that $|x-y|\leq 1$, which completes the proof of the theorem.
\end{proof}

Next we give a theorem which establishes the H\"older continuity
in the space variable using the Fourier transform.

\begin{theorem}\label{space Holder Thm}
Let $u$ be the solution to Equation \eref{our SPDE}.  Assume
conditions (a) and (b) in Theorem \ref{space Holder Thm no Fourier}. Suppose the following condition:
\begin{description}
\item{(a)}  For some $\gamma$ such that $0<\gamma \le 1$, the Fourier transform of the tempered measure $|\zeta|^{2\gamma} \mu (d\zeta)$ is a nonnegative locally integrable function and
\begin{equation} \label{eu4}
\int_{\RR^3} \frac  {\mu (d\zeta)}{1+|\zeta|^{2-2\gamma}} <\infty.
\end{equation}
\end{description}
Set $\kappa_1= \min(\gamma_1,\gamma_2,\gamma)$. Then for any $q\ge 2$,   there exists a constant $C$ such that
\begin{equation*}
\sup_{t \in [0,T]}\EE|u(t,x)-u(t,y)|^q\leq  C |x-y|^{q\kappa_1}
\end{equation*}
for any $x,y \in \mathbb{R}^3$.
\end{theorem}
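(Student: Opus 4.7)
The plan is to retain the global structure of the proof of Theorem~\ref{space Holder Thm no Fourier}, but to replace its four-term decomposition $Q=Q_1+\cdots+Q_4$ by a two-term Fourier-side decomposition that exploits the positivity hypothesis on $\mathcal{F}(|\zeta|^{2\gamma}\mu)$. Set $w=x-y$ with $|w|\le 1$ and split $\EE|u(t,x)-u(t,y)|^q\le C(I_1+I_2+I_3+I_4)$ as in the earlier proof. The drift and initial-data terms $I_2,I_3,I_4$ are handled verbatim and contribute bounds of order $C|w|^{q\gamma_1}$, $C|w|^{q\gamma_2}$, and $C|w|^q+C\int_0^t\sup_{z_1-z_2=w}\EE|u(s,z_1)-u(s,z_2)|^q\,ds$. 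The only new task is the stochastic integral term $I_1$. Applying the Burkholder inequality in its $U$-norm form to the difference of the two stochastic integrals gives $I_1\le C_q\EE(\int_0^t \|\Delta_{x,y}G(t-s)\,\sigma(s,\cdot,u(s,\cdot))\|_U^2 ds)^{q/2}$, where $\Delta_{x,y}G(t-s,d\xi)=G(t-s,x-d\xi)-G(t-s,y-d\xi)$. Computing the Fourier transform of the integrand and adding-and-subtracting $e^{-i\zeta\cdot y}H_x(\zeta)$ yields
$$\mathcal{F}[\Delta_{x,y}G\,\sigma](\zeta)=(e^{-i\zeta\cdot x}-e^{-i\zeta\cdot y})H_x(\zeta)+e^{-i\zeta\cdot y}(H_x(\zeta)-H_y(\zeta)),$$
with $H_x(\zeta):=\int_{\RR^3} e^{i\zeta\cdot z}\sigma(s,x-z,u(s,x-z))\,G(t-s,dz)$, so that $\|\Delta_{x,y}G\,\sigma\|_U^2\le 2A(s)+2B(s)$ where $A(s)=\int |e^{-i\zeta\cdot x}-e^{-i\zeta\cdot y}|^2|H_x(\zeta)|^2\,\mu(d\zeta)$ and $B(s)=\int|H_x(\zeta)-H_y(\zeta)|^2\mu(d\zeta)$.

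To bound $A(s)$, I use $|e^{-i\zeta\cdot x}-e^{-i\zeta\cdot y}|^2=2(1-\cos(\zeta\cdot w))\le C|\zeta|^{2\gamma}|w|^{2\gamma}$ and expand $|H_x(\zeta)|^2$ as a double integral in $(z_1,z_2)\in\RR^3\times\RR^3$. The hypothesis that $F_\gamma:=\mathcal{F}(|\zeta|^{2\gamma}\mu)$ is a nonnegative locally integrable function then converts the resulting spectral integral into a positive bilinear form on the spatial side:
$$A(s)\le C|w|^{2\gamma}\!\int_{\RR^3}\!\int_{\RR^3}\!\sigma(s,x-z_1,u(s,x-z_1))\sigma(s,x-z_2,u(s,x-z_2))F_\gamma(z_1-z_2)G(t-s,dz_1)G(t-s,dz_2).$$
By Lemma~\ref{l.2.1} applied with the spectral measure $|\zeta|^{2\gamma}\mu$, the total mass $\int_{\RR^3}\int_{\RR^3}F_\gamma(z_1-z_2)G(t-s,dz_1)G(t-s,dz_2)$ equals $\int|\zeta|^{2\gamma}\sin^2((t-s)|\zeta|)/|\zeta|^2\,\mu(d\zeta)$, which is uniformly bounded in $s$ via \eref{eu4} and the bound $\sin^2(t|\zeta|)/|\zeta|^2\le C/(1+|\zeta|^2)$. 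Jensen's inequality with respect to this finite measure, combined with the uniform moment estimate \eref{moments estimate} and the linear growth of $\sigma$, then give $\EE(\int_0^t A(s)\,ds)^{q/2}\le C|w|^{q\gamma}$. For $B(s)$, the identity $H_x(\zeta)-H_y(\zeta)=\int e^{i\zeta\cdot z}\Sigma_{x,y}(s,z)G(t-s,dz)$ with $\Sigma_{x,y}$ as in \eref{Sigma x,y} yields $B(s)=\int_{\RR^3}\int_{\RR^3}\Sigma_{x,y}(s,z_1)\Sigma_{x,y}(s,z_2)f(z_1-z_2)G(t-s,dz_1)G(t-s,dz_2)$, which is exactly the quantity $Q_1$ of Theorem~\ref{space Holder Thm no Fourier}; its bound via the Lipschitz property of $\sigma$ and condition \eref{H1} produces $\EE(\int_0^t B(s)\,ds)^{q/2}\le C|w|^q+C\int_0^t\sup_{z_1-z_2=w}\EE|u(s,z_1)-u(s,z_2)|^q\,ds$.

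Assembling the four estimates and applying Gronwall's lemma closes the argument. The main obstacle is the $A(s)$ bound: whereas the unweighted integral $\int_{\RR^3}\int_{\RR^3} G(t-s,d\xi)G(t-s,d\eta)[2f(\eta-\xi)-f(\eta-\xi+w)-f(\eta-\xi-w)]$ of Theorem~\ref{space Holder Thm no Fourier} admits a direct Parseval representation leading to $|w|^{2\gamma}$ under a pointwise second-difference hypothesis, the presence of the random factor $\sigma(s,\cdot,u(s,\cdot))$ in the integrand destroys this clean Fourier identity. The add-and-subtract identity isolates the purely spatial phase $e^{-i\zeta\cdot x}-e^{-i\zeta\cdot y}$ from the $\sigma$-weighted transform $H_x$, and the positivity of $F_\gamma$ is precisely what allows the resulting spectral bound to be realized as a positive bilinear form in $(z_1,z_2)$ amenable to Jensen's inequality and the moment estimates. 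This device is what bypasses the second-difference hypothesis required in Theorem~\ref{space Holder Thm no Fourier} and yields the cleaner H\"older exponent $\gamma$.
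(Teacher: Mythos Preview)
Your argument is correct and rests on the same core idea as the paper's proof: represent the $U$-norm on the Fourier side, extract the factor $|\zeta|^{2\gamma}|w|^{2\gamma}$ from the oscillation $1-\cos(\zeta\cdot w)$, and then use the nonnegativity of $g=\mathcal{F}(|\zeta|^{2\gamma}\mu)$ to pass back to a spatial bilinear form where H\"older/Minkowski and the moment bound \eref{moments estimate} apply. The difference is purely in the packaging. The paper keeps the exact four-term expansion $Q=Q_1+Q_2+Q_3+Q_4$ from Theorem~\ref{space Holder Thm no Fourier} and treats each piece: $Q_1$ is your $B$, $Q_4$ on the Fourier side is precisely $\int 2(1-\cos(w\cdot\zeta))|\mathcal{F}(\Sigma_x G)|^2\mu$ (your $A$ before the $|\zeta|^{2\gamma}$ bound), and the cross terms $Q_2,Q_3$ are reduced via Cauchy--Schwarz to a convex combination of $Q_1$- and $Q_4$-type quantities. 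Your add-and-subtract identity together with $|p+q|^2\le 2|p|^2+2|q|^2$ bypasses this by bounding $Q\le 2\int_0^t A(s)\,ds+2\int_0^t B(s)\,ds$ directly, so the cross terms never appear. This is a genuine economy, though the two arguments reduce to the same pair of estimates.

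Two small points of rigor that the paper makes explicit and you should too. First, the Parseval step turning $\int|\zeta|^{2\gamma}|H_x(\zeta)|^2\mu(d\zeta)$ into the spatial double integral against $F_\gamma$ requires $\Sigma_x$ bounded (cf.\ Lemma~\ref{varphi G Fourier id} and \eref{eu3}); the paper truncates to $\Sigma_x^k=\Sigma_x\mathbf{1}_{\{|\Sigma_x|\le k\}}$ and passes to the limit by dominated convergence, and you should indicate the same. Second, the identity you attribute to Lemma~\ref{l.2.1} for the total mass $\int\!\!\int F_\gamma(z_1-z_2)G(t-s,dz_1)G(t-s,dz_2)=\int|\zeta|^{2\gamma}\sin^2((t-s)|\zeta|)/|\zeta|^2\,\mu(d\zeta)$ is really Lemma~\ref{int identity of GGf} applied with $F_\gamma$ in place of $f$; this is legitimate because \eref{eu4} implies $\int_{\RR^3}\frac{|\zeta|^{2\gamma}\mu(d\zeta)}{1+|\zeta|^2}<\infty$, hence the analogue of \eref{H1} for $F_\gamma$.
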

\begin{proof}
Assume that $|x-y|\leq 1$ and set $x-y=w$.  Fix $q \geq 2$, as in the proof of
Theorem \ref{space Holder Thm no Fourier}, we still express
$\EE|u(t,x)-u(t,y)|^q$ as $C(I_1+I_2+I_3+I_4)$, and the estimates
for $I_2$, $I_3$, $I_4$ are the same as in the proof of Theorem
\ref{space Holder Thm no Fourier}. For $I_1$, use the notation \eref{Sigma x}
-\eref{Qi}  and  we need to  estimate $\EE |Q_i|^{\frac{q}{2}}$ for $i=1, \dots , 4$.

The estimate for $\EE |Q_1|^{\frac{q}{2}}$ is the same as in the proof of Theorem \ref{space Holder Thm no Fourier}.

For $Q_2$  we  would like to apply Equation (\ref{eu3}) to $\varphi=\Sigma_{x,y}(s,\eta)$ and
$\psi=\Sigma_{x}(s,\xi)$. Because there functions are not necessarily bounded we
 we introduce the truncations
\begin{eqnarray}\label{truncation1}
\Sigma_{x}^k(s,\xi)&=& \Sigma_{x}(s,\xi)
\mathbf{1}_{\{|\Sigma_{x}(s,\xi)|\leq k\}}, \\
\Sigma_{x,y}^k(s,\eta)&=&
\Sigma_{x,y}(s,\eta)\mathbf{1}_{\{|\Sigma_{x,y}(s,\eta)|\leq k\}}\,,
\end{eqnarray}
 for any $k>0$.  Clearly, as $k$ tends to infinity,  $\Sigma_{x}^k(s,\xi)$ and
$\Sigma_{x,y}^k(s,\eta)$ converge point-wise to  $\Sigma_{x}(s,\xi)$
and $\Sigma_{x,y}(s,\eta)$, respectively.  Set
\begin{equation*}
Q_2^k=\int_0^t ds \int_{\RR^3} \int_{\RR^3}
G(t-s,d\xi)G(t-s,d\eta)\left(f(\eta-\xi+w)-f(\eta-\xi)\right)\Sigma^k_{x}(s,\xi)\Sigma^k_{x,y}(s,\eta).
\end{equation*}
Then  Equation (\ref{eu3}) yields
\[
Q_2^k=\int_0^t ds \int_{\RR^3} \overline{  \mathcal{F}\left(\Sigma_x^k(s,\cdot)G(t-s)\right)(\zeta)}
\mathcal{F}\left( \Sigma_{x,y}^k(s,\cdot)G(t-s)\right)(\zeta)(e^{-iw\cdot \zeta}-1)\mu(d\zeta)
\]
Using the estimate $| e^{-iw\cdot \zeta}-1|\leq C
|w|^{\gamma}|\zeta|^{\gamma}$ for every $0 < \gamma \leq 1$,  Cauchy-Schwartz's inequality and the inequality $\sqrt{ab} \le \frac 12 (a +b)$ for any $a,b>0$, we can write
\begin{eqnarray*}
|Q_{2}^k| &\le &\int_0^t ds
\int_{\RR^3}\left|\mathcal{F}\left(\Sigma_x^k(s,\cdot)G(t-s)\right)(\zeta)\right|\left|\mathcal{F}\left(\Sigma_{x,y}^k(s,\cdot)G(t-s)\right)(\zeta)\right|  |w|^{\gamma}|\zeta|^{\gamma}\mu(d\zeta)\\
&\leq&\int_0^t ds |w|^{\gamma}\left(\int_{\RR^3}\left|\mathcal{F}\left(\Sigma_x^k(s,\cdot)G(t-s)\right)(\zeta)\right|^2|\zeta|^{2\gamma}\mu(d\zeta)\right)^{\frac{1}{2}}\\
&&\qquad\times\left(\int_{\RR^3}\left|\mathcal{F}\left(\Sigma_{x,y}^k(s,\cdot)G(t-s)\right)(\zeta)\right|^2\mu(d\zeta)\right)^{\frac{1}{2}}\\
&\leq&\frac{1}{2}\int_0^t ds |w|^{2\gamma}\int_{\RR^3}\left|\mathcal{F}\left(\Sigma_x^k(s,\cdot)G(t-s)\right)(\zeta)\right|^2|\zeta|^{2\gamma}\mu(d\zeta)\\
&&+\frac{1}{2}\int_0^t ds \int_{\RR^3} \left |\mathcal{F}\left(\Sigma_{x,y}^k(s,\cdot)G(t-s)\right)(\zeta)\right|^2\mu(d\zeta)\\
&=&\frac{1}{2}|w|^{2\gamma}\int_0^t ds \int_{\RR^3} d\eta g(\eta) \left(\Sigma_x^k(s,\cdot)G(t-s)\right)*\left(\widetilde{\Sigma_x^k(s,\cdot)G(t-s)}\right )(\eta)\\
&&+\frac{1}{2}\int_0^t ds \int_{\RR^3} d\eta f(\eta)\left(\Sigma_{x,y}^k(s,\cdot)G(t-s)\right)*\left(\widetilde{\Sigma_{x,y}^k(s,\cdot)G(t-s)}\right)(\eta)\\
&:=& Q^k_{2,1} + Q^k_{2,2},
\end{eqnarray*}
where $g$ is the Fourier transform of the measure $|\cdot|^{2\gamma}
\mu$, which by our hypothesis is a nonnegative locally integrable
function. In the above formula,   for any measure $\nu$, $\widetilde{\nu}$
denotes the measure  $\widetilde{\nu}(A)=\nu(-A)$.
Treating $g(\eta)
G(t-s)*G(t-s)(\eta) d\eta$ as a new measure, and using
Minkowski's's inequality, we get
\begin{eqnarray*}
\EE|Q_{2,1}^k|^{\frac{q}{2}} &\leq&C|w|^{q\gamma}\int_0^t ds
\left(\int_{\RR^3}d\eta g(\eta)
G(t-s)*G(t-s)(\eta)\right)^{\frac{q}{2}}\\
&&\times\sup_{0\leq s\leq t, x,\xi,\eta \in
\RR^3}  \EE\left|\Sigma_{x}(s,\xi)\Sigma_{x}(s,\xi+\eta)\right|^{\frac{q}{2}} \\
&\leq&C|w|^{q\gamma}\int _0^t ds \left(\int_{\RR^3}|\zeta|^{2\gamma}\mu(d\zeta)\mathcal{F}\left(G(t-s)*G(t-s)\right)(\zeta)\right)^{\frac{q}{2}}\\
&\leq&C|w|^{q\gamma}\left(\int_{\RR^3}
\mu(d\zeta)\frac{|\zeta|^{2\gamma}}{1+|\zeta|^2}\right)^{\frac{q}{2}}\leq
C|w|^{q\gamma},
\end{eqnarray*}
where we have used the  moments estimate \eref{moments estimate},  Equation
\eref{FourierTransofG}, the fact that
$\left(\frac{\sin(s|\xi|)}{|\xi|}\right)^2\leq
\frac{C}{1+|\xi|^2}$, when $s\in [0,T]$  and the inequality
$|\Sigma^k_{x}(s,\xi)|\leq |\Sigma_{x}(s,\xi)|$.
Therefore,
\begin{eqnarray*}
\EE\left|Q^k_{2}\right|^{\frac{q}{2}} &\leq &C|w|^{q\gamma}
+C\EE\int_0^t ds
\left(\int_{\RR^3}d\eta f(\eta)\left(\Sigma_{x,y}^k(s,\cdot)G(t-s)\right)*\left(\widetilde{\Sigma_{x,y}^k(s,\cdot)G(t-s)}\right)(\eta)
\right)^{\frac{q}{2}}.
\end{eqnarray*}
Applying the dominated convergence theorem we can show that in the
above inequality, as $k$ goes to infinity, the left-hand side converges to
$\EE\left|Q_2\right|^{\frac{q}{2}}$ and the expectation on the right-hand
side converges to
\begin{equation*}
\EE\int_0^t ds
\left(\int_{\RR^3}f(\eta)d\eta\left(\Sigma_{x,y}(s,\cdot)G(t-s)\right)*\left(\widetilde{\Sigma_{x,y}(s,\cdot)G(t-s)}\right)(\eta)\right)^{\frac{q}{2}}.
\end{equation*}
From  the expression of $\Sigma_{x,y}(s,\xi)$ and using
Minkowski's's inequality, we have
\begin{eqnarray*}
\EE\left|Q_{2}\right|^{\frac{q}{2}}
&\leq& C|w|^{q\gamma}+ C\int_0^t ds
\left(\int_{\RR^3}d\eta
f(\eta)\left(G(t-s)*G(t-s)\right)(\eta)\right)^{\frac{q}{2}}\\
&&\times\sup_{\eta \in \RR^3}\EE \left[
\left|\sigma\left(s,x-\eta,u(s,x-\eta)\right)-\sigma\left(s,y-\eta,u(s,y-\eta)\right)\right|^q\right]\\
&\leq&C|w|^{q\gamma} +C\int_0^t ds
\left(\int_{\RR^3}\frac{d\mu(\zeta)}{1+|\zeta|^2}\right)^{\frac{q}{2}}\left(|w|^q+\sup_{z_1-z_2=w}\EE\left|u(s,z_1)-u(s,z_2)\right|^q\right)\\
&\leq&C\left( |w|^{q\gamma}+|w|^q+\int_0^t ds
\sup_{z_1-z_2=w}\EE\left|u(s,z_1)-u(s,z_2)\right|^q\right).
\end{eqnarray*}
The same estimate holds for $\EE|Q_3|^{\frac{q}{2}}$.

Consider now the term $Q_4$. We use the truncation argument as in the estimation for $\EE|Q_2|^{\frac{q}{2}}$ and we set
\begin{eqnarray*}
Q_4^k&=&\int_0^t ds \int_{\RR^3} \int_{\RR^3}
G(t-s,d\xi)G(t-s,d\eta)\left(2f(\eta-\xi)-f(\eta-\xi+w)-f(\eta-\xi-w)\right)\\
&&\quad \times \Sigma^k_{x}(s,\xi)\Sigma^k_{x}(s,\eta).
\end{eqnarray*}
Then,  Equation (\ref{eu3}) implies
\begin{eqnarray*}
 Q_{4}^k&=&\int_0^t ds \int_{\RR^3}\mu(d\zeta)\left(1-\cos(w\cdot
\zeta)\right)|\left(\mathcal{F}\left(\Sigma_{x}^k(s,\cdot)G(t-s)\right)\right)(\zeta)|^2\\
&\leq&2|w|^{2\gamma}\int_0^t ds \int_{\RR^3}
\mu(d\zeta)|\zeta|^{2\gamma}|\left(\mathcal{F}\left(\Sigma_{x}^k(s,\cdot)G(t-s)\right)\right)(\zeta)|^2\\
&=&2|w|^{2\gamma}\int_0^t ds \int_{\RR^3}d\eta g(\eta)\left(\left(\Sigma_{x}^k(s,\cdot)G(t-s)\right)*\left(\widetilde{\Sigma_{x}^k(s,\cdot)G(t-s)}\right)\right)(\eta).
\end{eqnarray*}
Then we can use the same argument as before, to conclude that
\begin{equation*}
\EE\left|Q_{4}\right|^{\frac{q}{2}}\leq C |w|^{q\gamma}.
\end{equation*}
Combining the moment estimates for $E\left|Q_{i}\right|^{\frac
q2}$, $i=1,2,3,4$, since $|w|\leq 1$ and $0<\gamma\leq 1$, we have
\begin{equation}
I_{1}\leq C|w|^{q\gamma} +C\int_0^t ds \sup_
{z_1-z_2=w}\EE\left|u(s,z_1)-u(s,z_2)\right|^q.  \label{I1nestimate}
\end{equation}
Finally,  the estimates for  $I_i$, $i=1, 2, 3,4$,  allow us to write
\begin{eqnarray*}
&&\sup_{z_1-z_2=x-y}\EE|u(t,z_1)-u(t,z_2)|^q\\
&&\leq C|x-y|^{q\min(\gamma_1,\gamma_2,\gamma)}+C\int_0^t ds
\sup_{z_1-z_2=x-y}\EE\left|u(s,z_1)-u(s,z_2)\right|^q.
\end{eqnarray*}
An application of Gronwall's lemma yields
\begin{equation}
\EE|u(t,x)-u(t,y)|^q\leq C |x-y|^{q\min(\gamma_1,\gamma_2,\gamma)}
\label{E Space Holder}
\end{equation}
for any $x$ and $y$ in $\RR^3$ such that $|x-y|\leq 1$, which completes the proof of the theorem.
\end{proof}

\medskip
Under the assumptions of Theorem 3.1  or Theorem 3.2,   applying   Kolmogorov's continuity criterion, for any fixed $t\in [0,T]$, we deduce the existence of a locally H\"older continuous version for the process $\{u(t,x), x\in \RR^3\}$ with exponent
$\kappa>0$ where $\kappa<\kappa_1$.
Namely, for any  $t\in [0,T]$ and any compact rectangle $I\subset \mathbb{R}^3$, there exists a random variable $K_{\kappa,t,I}$ such that
\begin{equation*}
|u(t,x)-u(t,y)|\leq K _{\kappa,t,I}|x-y|^{\kappa}
\end{equation*}
for   and $x,y \in I $.

\section{H\"older continuity in space and  time variables}
In this section we obtain a result on the H\"older continuity of the
solution of Equation \eref{our SPDE} in both the space and time variables.
 Let $S^2$ denote the
unit sphere in $\RR^3$ and $\sigma(d\xi)$ the uniform measure on it.
We have the following result.
\begin{theorem}\label{time holder Thm}
Let $u $ be the solution to Equation \eref{our SPDE}.  Assume
conditions (a) and (b) in Theorem \ref{space Holder Thm no Fourier}. Suppose  the following conditions hold.
\begin{description}
 \item{(1)}
For some $0 < \nu\leq 1$, $\int_{|z|\leq h}\frac{f(z)}{|z|}dz\leq C
h^{\nu}$ for any $0 <h\leq 2T$.
\item{(2)} For some $0< \kappa_1 \le 1$  and for any $q\ge 2$ and $t \in (0,T]$, we have
\[
\EE | u(t,x) -u(t,y) | ^q \le C |x-y|^{q\kappa_1}.
\]
\item{(3)} Let $\xi$ and $\eta$ be unit vectors
in $\RR^3$ and $0<h\leq 1$. We have
\begin{equation}\label{time cond 1}
 \int_0^T \int_{S^2}\int_{S^2}
  \left|f\left(s(\xi+\eta) +h(\xi+\eta) \right)-f\left(s(\xi+\eta) +h\eta\right)\right|  s \sigma(d\xi) \sigma(d\eta) ds \leq C
  h^{\rho_1},
\end{equation}
for some $\rho_1 \in (0,1]$, and
\begin{eqnarray}
 &&\int_0^T \int_{S^2}\int_{S^2} \notag
  \Big|f\left(s(\xi+\eta) +h(\xi+\eta) \right)-f\left(s(\xi+\eta) +h\xi\right) \\
 &&\qquad  - f\left(s(\xi+\eta) +h\eta\right)+f\left(s(\xi+\eta)\right) \Big|
  s^2  \sigma(d\xi) \sigma(d\eta) ds  \leq C
h^{\rho_2}, \label{time cond 2}
\end{eqnarray}
for some $\rho_2\in(0,2]$.
\end{description}
Set $\kappa_2=  \min(\gamma_1,\gamma_2,\kappa_1,\frac{\nu+1}{2},\frac{\rho_1+\kappa_1}{2},\frac{\rho_2}{2})$.
Then for any  $q\ge 2$,  there exists a constant $C$ such that
\begin{equation*}
\sup_{x \in \RR^3}\EE|u(\bar{t},x)-u(t,x)|^q\leq C|\bar{t}-t|^{\kappa_2 }
\end{equation*}
for any $t, \bar{t} \in [0,T]$.
\end{theorem}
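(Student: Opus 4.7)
The plan is to mirror the proof of Theorem \ref{space Holder Thm no Fourier} but to reduce the time increments of the stochastic integral to a spatial increment on $S^2\times S^2$ via the identity $G(r,dy)=\tfrac{1}{4\pi r}\sigma_r(dy)$. Without loss of generality assume $0\le t<\bar t\le T$ and set $h=\bar t-t$, which we may take $\le 1$. From Definition \ref{def of mild solution} I decompose
\[
u(\bar t,x)-u(t,x)=J_1+J_2+J_3+J_4,
\]
where $J_3$ and $J_4$ are the deterministic increments coming from $v_0$ and $\bar v_0$, $J_2$ is the drift increment, and $J_1$ is the stochastic increment. The bounds $\EE|J_3|^q\le Ch^{q\gamma_1}$ and $\EE|J_4|^q\le Ch^{q\gamma_2}$ follow from the explicit formula for $\tfrac{d}{dt}(G(t)*v_0)(x)$ and from $(G(t)*\varphi)(x)=\tfrac{t}{4\pi}\int_{S^2}\varphi(x-t\xi)\sigma(d\xi)$ together with the H\"older hypotheses on $\Delta v_0$ and $\bar v_0$. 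For the drift I split $J_2=J_{2,1}+J_{2,2}$ into the contribution on $[t,\bar t]$ and the difference $(G(\bar t-s)-G(t-s))*b(s,\cdot,u(s,\cdot))$ on $[0,t]$; using the linear growth and Lipschitz bound in (\textbf{H}), the moment estimate \eref{moments estimate} and the spatial H\"older assumption (2), a direct computation gives $\EE|J_2|^q\le Ch^{q\kappa_1}$.

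The core of the argument is the splitting $J_1=J_{1,1}+J_{1,2}$ with
\[
J_{1,1}=\int_t^{\bar t}\!\!\int_{\RR^3}G(\bar t-s,x-dy)\sigma(s,y,u(s,y))W(ds,dy),
\]
\[
J_{1,2}=\int_0^t\!\!\int_{\RR^3}\bigl[G(\bar t-s,x-dy)-G(t-s,x-dy)\bigr]\sigma(s,y,u(s,y))W(ds,dy).
\]
For $J_{1,1}$, Lemma \ref{Burkholder}, H\"older's inequality in $s$, the linear growth of $\sigma$ combined with \eref{moments estimate}, and hypothesis (1) give
\[
\EE|J_{1,1}|^q\le Ch^{q/2-1}\int_t^{\bar t}\!\left(\int_{|z|\le 2(\bar t-s)}\frac{f(z)}{|z|}dz\right)^{\!q/2}\!ds\le Ch^{q(\nu+1)/2}.
\]

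The hardest step is $J_{1,2}$. Applying Lemma \ref{Burkholder} reduces matters to bounding the $(q/2)$-th moment of
\[
\int_0^t\!\int_{\RR^3}\!\int_{\RR^3}\bigl[G(\bar t-s)-G(t-s)\bigr](x-d\xi)\bigl[G(\bar t-s)-G(t-s)\bigr](x-d\eta)f(\xi-\eta)\sigma\sigma\,ds.
\]
Parametrizing each sphere of radius $r\in\{t-s,\bar t-s\}$ by $S^2$ via $y=r\xi$, and then substituting $\xi\mapsto-\xi$, $\eta\mapsto-\eta$ (which leaves $\sigma(d\xi)\sigma(d\eta)$ invariant), the four terms from expanding $[G(\bar t-s)-G(t-s)]^{\otimes 2}$ combine, with $s^*=t-s$, into the second-order difference
\[
f\bigl(s^*(\xi+\eta)+h(\xi+\eta)\bigr)-f\bigl(s^*(\xi+\eta)+h\xi\bigr)-f\bigl(s^*(\xi+\eta)+h\eta\bigr)+f\bigl(s^*(\xi+\eta)\bigr),
\]
modulated by products $\sigma(s,x-r_1\xi,u(s,x-r_1\xi))\sigma(s,x-r_2\eta,u(s,x-r_2\eta))$ with $r_1,r_2\in\{s^*,s^*+h\}$. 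Mimicking the $Q_1,\ldots,Q_4$ decomposition used in Theorem \ref{space Holder Thm no Fourier}, I separate the integrand into: (i) pieces where $r_1=r_2$ and the $f$-term appears as a pure second-order difference, bounded directly against the measure $s^2\sigma(d\xi)\sigma(d\eta)ds$ by hypothesis \eref{time cond 2} and producing a contribution $\le Ch^{q\rho_2/2}$; and (ii) mixed pieces where $r_1\ne r_2$, in which a first-order $f$-difference is paired with a $\sigma$-increment of size $h^{\kappa_1}$ coming from the Lipschitz bound on $\sigma$ and hypothesis (2) on the spatial H\"older regularity of $u$. Applying the interpolation $ab\le\tfrac{1}{2}(h^{\kappa_1}a^2+h^{-\kappa_1}b^2)$ on these cross terms and invoking hypothesis \eref{time cond 1} for the remaining first-order $f$-difference then produces a contribution $\le Ch^{q(\rho_1+\kappa_1)/2}$.

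Collecting the bounds on $J_3$, $J_4$, $J_2$, $J_{1,1}$, $J_{1,2}$ and taking the supremum in $x$ yields
\[
\sup_{x\in\RR^3}\EE|u(\bar t,x)-u(t,x)|^q\le Ch^{q\kappa_2},
\]
with $\kappa_2=\min(\gamma_1,\gamma_2,\kappa_1,(\nu+1)/2,(\rho_1+\kappa_1)/2,\rho_2/2)$. No Gronwall step is required, since the right-hand sides only involve the already-known spatial H\"older modulus from hypothesis (2). The principal obstacle is the spherical bookkeeping in $J_{1,2}$: one must carry out the $\xi\mapsto-\xi$, $\eta\mapsto-\eta$ change of variables and track the four radii combinations carefully enough that the product of differences recombines exactly into the form appearing in \eref{time cond 2}, and one must exploit the AM--GM interpolation on the mixed terms to recover the improved exponent $(\rho_1+\kappa_1)/2$ rather than the naive $\rho_1/2$ given by Cauchy--Schwarz alone.
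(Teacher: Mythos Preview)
Your overall architecture matches the paper's proof: the same $J_1,\dots,J_4$ decomposition, the same splitting $J_1=J_{1,1}+J_{1,2}$, the same use of condition (1) for $J_{1,1}$, and the spherical parametrization of $J_{1,2}$. The drift and initial-data terms are handled as in the paper, and no Gronwall step is needed.

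The description of the $J_{1,2}$ decomposition, however, has a genuine gap. Splitting the four terms of $[G(\bar t-s)-G(t-s)]^{\otimes 2}$ according to whether $r_1=r_2$ or $r_1\ne r_2$ does \emph{not} produce a pure second-order $f$-difference in the diagonal pieces: the two terms with $r_1=r_2$ carry $f$ at single points $(s^*+h)(\xi+\eta)$ and $s^*(\xi+\eta)$, each multiplied by \emph{different} $\Sigma$-products and different radial weights $(s^*+h)^2$ and $(s^*)^2$. The second-order $f$-difference only emerges after you anchor the $\Sigma$-factors at a common radius. The paper does this by writing $\Sigma_{t,x}(s,(s^*+h)\xi)=[\Sigma_{t,x}(s,(s^*+h)\xi)-\Sigma_{t,x}(s,s^*\xi)]+\Sigma_{t,x}(s,s^*\xi)$ (and likewise in $\eta$) and expanding, yielding four pieces $R_1,\dots,R_4$: $R_1$ carries both $\Sigma$-increments against a single $f$ (bounded via \eref{H1}, giving $h^{q\kappa_1}$), $R_2,R_3$ carry one $\Sigma$-increment against a weighted first-order $f$-difference, and $R_4$ carries $\Sigma(s^*\xi)\Sigma(s^*\eta)$ against the weighted second-order combination. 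Your dichotomy (i)/(ii) misses $R_1$ entirely.

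A second omission is the radial weights. Since $G(r,dy)=\tfrac{r}{4\pi}\sigma(d\xi)$, the four terms carry prefactors $(s^*+h)^2$, $s^*(s^*+h)$, $(s^*)^2$, so $R_4$ is not simply $s^2$ times the second-order $f$-difference in \eref{time cond 2}; one must peel off the mismatch $(s^*+h)^2-s^*(s^*+h)=h(s^*+h)$ etc., which generates additional first-order and zeroth-order $f$-terms (the paper's $R_4^2,R_4^3,R_4^4$ and $R_2^2$), each handled by \eref{time cond 1} or condition (1). The AM--GM interpolation you propose for the mixed terms is a valid alternative to the paper's direct H\"older-plus-Cauchy--Schwarz, and gives the same exponent $(\rho_1+\kappa_1)/2$.
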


\begin{proof}
Fix $x \in \RR^3$ and $q \in [2,\infty)$. For all $0\leq t\leq
\bar{t}\leq T$ we can write, by Definition \ref{def of mild
solution},
\begin{equation*}
\EE|u(t,x)-u(\bar{t},x)|^q\leq C\sum_{i=1}^4T_i,
\end{equation*}
where
\begin{eqnarray*}
 T_1 & =& \left|\left(\frac{d}{dt}G(t)*v_0\right)(x)-\left(\frac{d}{dt}G(\bar{t})*v_0\right)(x)\right|^q,\\
 T_2  &=& \left|(G(t)*\bar{v}_0)(x)-(G(\bar{t})*\bar{v}_0)(x)\right|^q,\\
 T_3  &=&\EE\Big|\int_0^t ds
\int_{\mathbb{R}^3}G(t-s,dy)b\left(s,x-y,u(s,x-y)\right)\\
&& \quad -\int_0^{\bar{t}} ds
\int_{\mathbb{R}^3}G(\bar{t}-s,dy)b\left(s,x-y,u(s,x-y)\right)\Big|^q,\\
 T_4 &=& \EE\Big|\int_0^t\int_{\mathbb{R}^3}G(t-s,x-dy)\sigma\left(s,y,u(s,y)\right)W(ds,dy)\\
&&
\quad -\int_0^{\bar{t}}\int_{\mathbb{R}^3}G(\bar{t}-s,x-dy)\sigma\left(s,y,u(s,y)\right)W(ds,dy)\Big|^q.
\end{eqnarray*}

Let $\gamma^{\prime}=\min(\gamma_1,\gamma_2)$. By our assumptions on
$\Delta v_0$ and $\bar{v}_0$ and by Lemma 4.9 in \cite{dalang4}, we
have
\begin{equation}\label{T_1+T_2}
 T_1 +T_2 \leq
C|t-\bar{t}|^{q\gamma^{\prime}}.
\end{equation}
Notice that Lemma 4.9 in \cite{dalang4} assumes that $x$ belongs to
a bounded set in $\RR^3$, but from the proof it is easy to see that
the constant $C$ does not depend on $x$.

The term $T_3 $ is bounded by
\begin{equation*}
T_3 \leq C(T_{3,1} +T_{3,2} ),
\end{equation*}
where
\begin{eqnarray*}
  T_{3,1} &=&\EE\left|\int_t^{\bar{t}}ds\int_{\mathbb{R}^3}
G(\bar{t}-s,dy)b\left(s,x-y,u(s,x-y)\right)\right|^q,\\
 T_{3,2} &=&\EE\left|\int_0^t ds \int
_{\mathbb{R}^3}\left(G(t-s,dy)-G(\bar{t}-s,dy)\right)b\left(s,x-y,u(s,x-y)\right)\right|^q.
\end{eqnarray*}
  H\"older's inequality,  the linear growth of $b$ and the moments
estimate \eref{moments estimate} imply
\begin{eqnarray*}
 T_{3,1} &\leq& C
\left(\int_t^{\bar{t}}ds
\int_{\mathbb{R}^3}G(\bar{t}-s,dy)\right)^{q-1}\\
&&\times\left(\int_t^{\bar{t}}ds
\int_{\mathbb{R}^3}G(\bar{t}-s,dy)\sup_{0\leq s \leq T}\sup_{x\in
\mathbb{R}^3}\left(1+\EE|u(s,x)|^q\right)\right)\\
&\leq& C (\bar{t}-t)^{ q}.
\end{eqnarray*}
For $T_{3,2} $, we split the integral into a difference of two
integrals and then we apply the change of variables
$\frac{y}{t-s}\rightarrow y$ and $\frac{y}{\bar{t}-s}\rightarrow y$,
respectively. In this way, taking into account that $G(t,dy)= t^{-2}
G(1, t^{-1} dy)$, we get
\begin{eqnarray*}
 T_{3,2}
&=&\EE\Big|\int_0^tds\int_{\mathbb{R}^3}G(1,dy)b\left(s,x-(t-s)y,u\left(s,x-(t-s)y\right)\right)(t-s)\\
&&-\int_0^tds\int_{\mathbb{R}^3}G(1,dy)b\left(s,x-(\bar{t}-s)y,u\left(s,x-(\bar{t}-s)y\right)\right)(\bar{t}-s)\Big|^q.
\end{eqnarray*}
Hence, $T_{3,2} \leq C\left (T_{3,2,1} +T_{3,2,2} \right)$, where
\[
 T_{3,2,1} =(\bar{t}-t)^q\EE\left|\int_0^t ds
\int_{\mathbb{R}^3}G(1,dy)b\left(s,x-(\bar{t}-s)y,u\left(s,x-(\bar{t}-s)y\right)\right)\right|^q
\]
and
\begin{eqnarray*}
 T_{3,2,2} &=&\EE\Big(\int_0^t ds
(t-s)\int_{\mathbb{R}^3}G(1,dy)\Big|b\left(s,x-(\bar{t}-s)y,u\left(s,x-(\bar{t}-s)y\right)\right)\\
&&
-b\left(s,x-(t-s)y,u\left(s,x-(t-s)y\right)\right)\Big|\Big)^q.
\end{eqnarray*}
By the moments estimate \eref{moments estimate} and the linear
growth of $b$, it follows that
\begin{equation*}
 T_{3,2,1} \leq C|\bar{t}-t|^q.
\end{equation*}
Moreover, by the Lipschitz property of $b$  and
  H\"older continuity assumption on  the space variable
(condition (2) in the theorem), we get
\begin{eqnarray*}
 T_{3,2,2}
&\leq&C \EE\Big(\int_0^t ds
(t-s)\int_{\mathbb{R}^3}G(1,dy)\big((\bar{t}-t)|y|  \\
&& +\left|u(s,x-(\bar{t}-s)y)-u(s,x-(t-s)y)\right|\big)\Big)^q\\
&\leq&\left(\int_0^t ds
(t-s)\int_{\mathbb{R}^3}G(1,dy)\right)^{q-1}
 \int_0^t ds (t-s)
\int_{\mathbb{R}^3}G(1,dy)\Big((\bar{t}-t)^q|y|^q \\
&&+\sup_{x \in
\RR^3}\EE\left|u\left(s,x-(\bar{t}-s)y\right)-u\left(s,x-(t-s)y\right)\right|^q\Big)\\
&\leq&C(|\bar{t}-t|^q+|\bar{t}-t|^{q\kappa})\leq C
|\bar{t}-t|^{q\kappa}.
\end{eqnarray*}
Combining the estimates for $T_{3,1} $,
$T_{3,2,1} $ and $T_{3,2,2} $ we conclude
that
\begin{equation}\label{T_3}
  T_3 \leq C|\bar{t}-t|^{q\kappa}.
\end{equation}

Next we estimate the term $T_4 $ which involves a
  stochastic integral.
Consider the decomposition
\begin{equation*}
T_{4 } \leq
C(T_{4, 1} +T_{4, 2} ),
\end{equation*}
where
 \[
 T_{4, 1 } =\EE\left|\int_t^{\bar{t}}\int_{\mathbb{R}^3}G (\bar{t}-s,x-dy)\sigma\left(s,y,u(s,y)\right)W(ds,dy)\right|^q
 \]
 and
 \[
 T_{4,2 } =\EE\left|\int_0^t\int_{\mathbb{R}^3}\left(G (\bar{t}-s,x-dy)-G(t-s,x-dy)\right)\sigma \left(s,y,u(s,y)\right) W(ds,dy)\right|^q.
\]
By the linear growth of $\sigma$ and  Burkholder's inequality
(Lemma \ref{Burkholder}), we obtain
\begin{eqnarray*}
  T_{4, 1 }
&\leq&C\EE\Big(\int_t^{\bar{t}}ds
\int_{\RR^3}\int_{\RR^3}
G (\bar{t}-s,x-dy)G (\bar{t}-s,x-dz)\\
&&\times
f(y-z)\sigma\left(s,y,u(s,y)\right)\sigma\left(s,z,u(s,z)\right)\Big)^{\frac{q}{2}}\\
&=&C\EE\Big(\int_0^{\bar{t}-t} ds
\int_{\RR^3}\int_{\RR^3}
G (s,x-dy)G (s,x-dz)\\
&&\times
f(y-z)\sigma\left(\bar{t}-s,y,u(\bar{t}-s,y)\right)\sigma\left(\bar{t}-s,z,u(\bar{t}-s,z)\right)\Big)^{\frac{q}{2}}\\
&\leq&C\EE\Big(\int_0^{\bar{t}-t} ds
\int_{\RR^3}\int_{\RR^3}
 G (s,x-dy)G (s,x-dz)\\
&&\times
f(y-z)\left(1+|u(\bar{t}-s,y)|\right)\left(1+|u(\bar{t}-s,z)|\right)\Big)^{\frac{q}{2}}.
\end{eqnarray*}
Using  H\"older's inequality, the moments estimate \eref{moments
estimate} and   condition (1), we can write
\begin{eqnarray}
 T_{4, 1}
\nonumber &\leq& C \left(\bar{t}-t\right)^{\frac{q}{2}-1}\int_0^{\bar{t}-t}ds\left(\int_{\RR^3}\int_{\RR^3}G(s,x-dy)G(s,x-dz)f(y-z)\right)^{\frac{q}{2}}\\
&&\nonumber \times\sup_{s\in[0,T],y,z\in
\RR^3}\EE\left((1+|u(s,y)|)^{\frac{q}{2}}(1+|u(s,z)|)^{\frac{q}{2}}\right)\\
&\leq& \nonumber
C(\bar{t}-t)^{\frac{q}{2}-1}\int_0^{\bar{t}-t}ds\left(
\int_{\RR^3}\int_{\RR^3}
 G (s,dy)G (s,dz)
f(y-z)\right)^{\frac{q}{2}}\\
\nonumber
&\leq&C(\bar{t}-t)^{\frac{q}{2}-1}\int_0^{\bar{t}-t}ds\left(
\int_{|z|\leq 2s}\frac{f(z)}{|z|}dz\right)^{\frac{q}{2}}\\
&\leq&C (\bar{t}-t)^{\frac{q}{2}-1}\int_0^{\bar{t}-t}
s^{\nu\frac{q}{2}}ds
 = C(\bar{t}-t)^{q\frac{\nu+1}{2}}. \label{T_4 1}
\end{eqnarray}
  For $T_{4, 2}$, for notational convenience we denote $\bar{t}-t$ by $h$. Applying Burkholder's inequality (see  Lemma \ref{Burkholder})  yields
\begin{eqnarray*}
 T_{4, 2  }
&\leq&C\EE\Big(\int_0^t\int_{\RR^3}\int_{\RR^3}\left(G (h+s,dy)-G (s,dy)\right)\left(G (h+s,dz)-G (s,dz)\right)\\
&&\times
f(y-z)\Sigma_{t,x}(s,y)\Sigma_{t,x}(s,z) ds\Big)^{\frac{q}{2}},
\end{eqnarray*}
where
  $\Sigma_{t,x}(s,y)=\sigma(t-s,x-y,u(t-s,x-y)) $.
  The integral with respect to the space variables $y$ and $z$  is actually taken on
the sphere $S^2$ in the three dimensional space because of the
structure of the fundamental solution $G$. We denote
$\xi=\frac{y}{|y|}$ and $\eta=\frac{z}{|z|}$ and  we recall that   $\sigma(d\xi)$ and $\sigma(d\eta)$ denote the uniform measure on $S^2$, so
\begin{eqnarray*}
&&G(s,dy)=\frac{1}{4\pi}s\sigma(d\xi),\\
&&G(s,dz)=\frac{1}{4\pi}s\sigma(d\eta).
\end{eqnarray*}
After some rearrangements similar to those made for  $Q$ in the proof of Theorem
\ref{space Holder Thm}, we can write
\begin{eqnarray*}
T_{4,2}=C\EE\Big(R_1+R_2+R_3+R_4\Big)^{\frac{q}{2}} \leq
C\sum_{i=1}^4\EE|R_i|^{\frac{q}{2}}\,,
\end{eqnarray*}
where
\begin{eqnarray*}
R_1&=&\int_0^t\int_{S^2\times S^2}(s+h)^2f\left((s+h)\xi-(s+h)\eta\right)\\
&&\times\left(\Sigma_{t,x}\left(s,(s+h)\xi\right)-\Sigma_{t,x}\left(s,s\xi\right)\right)\left(\Sigma_{t,x}\left(s,(s+h)\eta\right)-\Sigma_{t,x}\left(s,s\eta\right)\right)\sigma(d\xi) \sigma(d\eta) ds,\\
R_2&=&\int_0^t\int_{S^2\times S^2}\left((s+h)^2f\left((s+h)\xi-(s+h)\eta\right)-s(s+h)f\left(s\xi-(s+h)\eta\right)\right)\\
&&\times\left(\Sigma_{t,x}\left(s,(s+h)\eta\right)-\Sigma_{t,x}\left(s,s\eta\right)\right)\Sigma_{t,x}\left(s,s\xi\right)\sigma(d\xi)\sigma(d\eta)ds,\\
R_3&=&\int_0^t\int_{S^2\times S^2}\left((s+h)^2f\left((s+h)\xi-(s+h)\eta\right)-s(s+h)f\left((s+h)\xi-s\eta\right)\right)\\
&&\times\left(\Sigma_{t,x}\left(s,(s+h)\xi\right)-\Sigma_{t,x}\left(s,s\xi\right)\right)\Sigma_{t,x}\left(s,s\eta\right)\sigma(d\xi)\sigma(d\eta)ds,\\
R_4&=&\int_0^t\int_{S^2\times S^2}\Big((s+h)^2f\left((s+h)\xi-(s+h)\eta\right)-s(s+h)f\left(s\xi-(s+h)\eta\right)\\
&&-s(s+h)f\left((s+h)\xi-s\eta\right)+s^2f(s\xi-s\eta)\Big)\Sigma_{t,x}(s,s\xi)
\Sigma_{t,x}(s,s\eta)\sigma(d\xi)\sigma(d\eta)ds.
\end{eqnarray*}
We estimate each $\EE|R_i|^{\frac{q}{2}}$ separately.

For $\EE|R_1|^{\frac{q}{2}}$, using  H\"older's inequality, the
Lipschitz condition on  $\sigma$, the assumption on the  H\"older
continuity on  the space variable of $u$ (condition (2)), Lemma \ref{conv of G G lemma} and
condition \eref{H1}, we have
\begin{eqnarray}
 \nonumber \EE|R_1|^{\frac{q}{2}}
\nonumber &\leq&C h^{q\kappa} \int_0^t \left(\int_{S^2\times
S^2}(s+h)^2f\left((s+h)\xi-(s+h)\eta\right)\sigma(d\xi)\sigma(d\eta)\right)^{\frac{q}{2}} ds \\
\nonumber &=&C h^{q\kappa} \int_0^t\left(\int_{\RR^3}\int_{\RR^3}f(y-z)G(s+h,dy)G(s+h,dz)\right)^{\frac{q}{2}} ds \\
\nonumber &=&C h^{q\kappa} \int_0^t\left(\int_{|z|\leq 2(s+h)}\frac{f(z)}{|z|}dz\right)^{\frac{q}{2}} ds \\
&\leq&C h^{q\kappa}. \label{R_1}
\end{eqnarray}
In order to estimate $\EE|R_2|^{\frac{q}{2}}$,  we make the
decomposition
\begin{eqnarray*}
 \EE|R_2|^{\frac{q}{2}}
&\leq&C\EE\Big(\int_0^t\int_{S^2\times S^2}s(s+h) \left|f\left((s+h)\xi-(s+h)\eta\right)-f\left(s\xi-(s+h)\eta\right)\right|\\
&&\quad \times\left|\Sigma_{t,x}\left(s,(s+h)\eta\right)-\Sigma_{t,x}\left(s,s\eta\right)\right||\Sigma_{t,x}\left(s,s\xi\right)|\sigma(d\xi)\sigma(d\eta)ds\Big)^{\frac{q}{2}}\\
&&+C\EE\Big(\int_0^t\int_{S^2\times
S^2}h(s+h)f\left((s+h)\xi-(s+h)\eta\right) \\
&&\quad \times \left| \Sigma_{t,x}(s,s\xi)\right| \left|\Sigma_{t,x}\left(s,(s+h)\eta\right)-\Sigma_{t,x}\left(s,s\eta\right)\right|\sigma(d\xi)\sigma(d\eta)ds\Big)^{\frac{q}{2}}\\
&:=&R_2^1+R_2^2.
\end{eqnarray*}
For $R_2^1$,  using the H\"older inequality,  the Lipschitz  and
linear growth conditions on $\sigma$, the moments estimate \eref{moments estimate}, the assumption on the H\"older
continuity in the space variable of $u$  (condition (2)) and condition \eref{time cond 1}
with the change of variable $\eta \to -\eta$, we have
\begin{eqnarray*}
 R_2^1
&\leq&C\Big(\int_0^t \int_{S^2\times
S^2} s(s+h) \big|f\left((s+h)\xi-(s+h)\eta\right)\\
&& \quad-f\left(s\xi-(s+h)\eta\right)\big|\sigma(d\xi)\sigma(d\eta)ds\Big)^{\frac{q}{2}}h^{\frac{q\kappa}{2}}\\
&\leq&C\left(\int_0^t\int_{S^2\times
S^2}s \left|f\left((s+h)\xi+(s+h)\eta\right)-f\left(s\xi+(s+h)\eta\right)\right|\sigma(d\xi)\sigma(d\eta)ds\right)^{\frac{q}{2}}h^{\frac{q\kappa}{2}}\\
&\leq&Ch^{\frac{q\rho_1+q\kappa}{2}}.
\end{eqnarray*}
For $R_2^2$, by using  H\"older inequality's,  the Lipschitz condition   and
linear growth conditions on $\sigma$, the moments estimate \eref{moments estimate}, the assumption on the  H\"older
continuity in the space variable (condition (2))  and condition (1), we have
\begin{eqnarray*}
 R_2^2
&\leq&C\left(\int_0^t \int_{S^2\times
S^2}(s+h)f\left((s+h)\xi-(s+h)\eta\right)\sigma(d\xi)\sigma(d\eta)ds\right)^{\frac{q}{2}}h^{\frac{q+q\kappa}{2}}\\
&\leq&C\left(\int_0^t\frac{1}{s+h}
\int_{\RR^3}\int_{\RR^3}f(y-z)G(s+h,dy)G(s+h,dz)ds\right)^{\frac{q}{2}}h^{\frac{q+q\kappa}{2}}\\
&=&C\left(\int_0^t \frac{1}{s+h}\int_{|z|\leq 2(s+h)}\frac{
f(z)}{|z|}dz ds\right)^{\frac{q}{2}}h^{\frac{q+q\kappa}{2}}\\
&\leq&C h^{\frac{q+q\kappa}{2}}.
\end{eqnarray*}
Combining the estimates for $R_2^1$ and $R_2^2$, we have
\begin{equation}\label{R_2}
\EE|R_2|^{\frac{q}{2}}\leq C h^{\frac{q(\rho_1+\kappa)}{2}}.
\end{equation}
Similarly,
\begin{equation}\label{R_3}
\EE|R_3|^{\frac{q}{2}}\leq C h^{\frac{q(\rho_1+\kappa)}{2}}.
\end{equation}

For $R_4$,   using the  linear growth of $\sigma$, the moments
estimate \eref{moments estimate} and the change of
variable $\eta \to -\eta$, we have
\begin{eqnarray*}
 \EE|R_4|^{\frac{q}{2}}
&\leq&C\Big(\int_0^t\int_{S^2\times S^2}\big|(s+h)^2f\left((s+h)\xi-(s+h)\eta\right)-s(s+h)f\left(s\xi-(s+h)\eta\right)\\
&&-s(s+h)f\left((s+h)\xi-s\eta\right)+s^2f(s\xi-s\eta)\big|\sigma(d\xi)\sigma(d\eta)ds\Big)^{\frac{q}{2}}\\
&\leq&C\Big(\int_0^t\int_{S^2\times
S^2}s^2\big|f\left((s+h)\xi+(s+h)\eta\right)-f\left(s\xi+(s+h)\eta\right)\\
&&\ \ \ \ -f\left((s+h)\xi+s\eta\right)+f\left(s\xi+s\eta\right)\big|\sigma(d\xi)\sigma(d\eta)ds\Big)^{\frac{q}{2}}\\
&&+C\left(\int_0^t\int_{S^2\times
S^2}sh\left|f\left((s+h)\xi+(s+h)\eta\right)-f\left(s\xi+(s+h)\eta\right)\right|\sigma(d\xi)\sigma(d\eta)ds\right)^{\frac{q}{2}}\\
&&+C\left(\int_0^t\int_{S^2\times
S^2}sh\left|f\left((s+h)\xi+(s+h)\eta\right)-f\left((s+h)\xi+s\eta\right)\right|\sigma(d\xi)\sigma(d\eta)ds\right)^{\frac{q}{2}}\\
&&+C\left(\int_0^t\int_{S^2\times
S^2}h^2f\left((s+h)\xi-(s+h)\eta\right)\sigma(d\xi)\sigma(d\eta)ds\right)^{\frac{q}{2}}\\
&:=&R_4^1+R_4^2+R_4^3+R_4^4.
\end{eqnarray*}
For $R_4^1$ condition \eref{time cond 2} yields
\begin{equation*}
R_4^1\leq  C h^{\frac{q\rho_2}{2}}.
\end{equation*}
For $R_4^2$ and $R_4^3$  applying condition  \eref{time cond 1}  and we obtain
\begin{equation*}
R_4^2\leq C h^{\frac{q\rho_1+q}{2}}, \quad R_4^3\leq C h^{\frac{q\rho_1+q}{2}}.
\end{equation*}
For $R_4^4$,  condition (1) allows us to write
\begin{eqnarray*}
R_4^4&=& C h^q\left(\int_0^t \int_{|z|\leq 2(s+h)}\frac{1}{(s+h)^2}\frac{f(z)}{|z|}dz ds\right)^{\frac{q}{2}}\\
&\leq&C h^q \left(\int_0^t (s+h)^{-2+\nu} ds \right)^{\frac{q}{2}}.
\end{eqnarray*}
When $\nu < 1$, $R_4^4 \leq C h^{\frac{q(\nu+1)}{2}}$, when $\nu=1$,
$R_4^4\leq C h^q (\log (T+h)-\log h)\leq C h^{q(1-\varepsilon)}$ for
any $\varepsilon > 0$.

Combining the estimates for $R_4^1$, $R_4^2$, $R_4^3$, $R_4^4$, we
have
\begin{equation}\label{R_4}
\EE|R_4|^{\frac{q}{2}}\leq C
(h^{\frac{q\rho_2}{2}}+h^{\frac{q\rho_1+q}{2}}+h^{q\frac{\nu+1}{2}}+h^{q(1-\varepsilon)})\,,
\end{equation}
for any $\varepsilon > 0$.  By \eref{T_4 1}, \eref{R_1}, \eref{R_2},
\eref{R_3} and \eref{R_4}, we conclude that
\begin{equation}\label{T_4 final}
T_4 \leq Ch^{q\rho}\,,
\end{equation}
where $0<\rho <\min(\frac{\nu+1}{2},\frac{\rho_1+\kappa}{2},\frac{\rho_2}{2},\kappa)$.
From the proof it is easy to see that the constant $C$ in the above
expression does not depend on $x$. Then we combine the estimates of
\eref{T_1+T_2}, \eref{T_3} and \eref{T_4 final} to obtain
\begin{equation*}
\sup_{x \in \RR^3}\EE|u(t,x)-u(\bar{t},x)|^q\leq C |\bar{t}-t|^{q\kappa^{\prime}}\,,
\end{equation*}
where $\kappa^{\prime} \in \left(0, \min(\gamma_1, \gamma_2,
\frac{\nu+1}{2}, \frac{\rho_1+\kappa}{2}, \frac{\rho_2}{2},
\kappa)\right)$.
\end{proof}

\medskip
An application of Kolmogorov's continuity criteria leads to the following H\"older continuity result in the space an time variables.

\begin{corollary}
Let $u$ be the solution to Equation \eref{our SPDE}.  Assume
conditions (a) and (b) in Theorem \ref{space Holder Thm no Fourier}. Suppose that condition (c) of Theorem 3.1 or condition holds (a) of Theorem 3.2 hold. Set  $\kappa_1= \min(\gamma_1,\gamma_2,\gamma,\frac{\gamma^{\prime}}{2})$ in the first case and   $\kappa_1= \min(\gamma_1,\gamma_2,\gamma)$ in the second case. Suppose also that conditions (1), (2) and (3) of Theorem 3.2 hold. Set $\kappa_2=  \min(\gamma_1,\gamma_2,\kappa_1,\frac{\nu+1}{2},\frac{\rho_1+\kappa_1}{2},\frac{\rho_2}{2})$. Then, for any $\kappa< \kappa_1$ and $\kappa'<\kappa_2$ there exists a version of the process $u$ which is locally H\"older continuous of order $\kappa$ in the space variable and of order $\kappa'$ in the time variable. That is, for any bounded rectangle $I\subset \RR^3$ we can find a random variable $K_{\kappa, \kappa',I}$ such that
\[
|u(t,x)- u(s,y)| \le K_{\kappa, \kappa',I} \left( |t-s|^{\kappa'} + |x-y|^{\kappa} \right)
\]
for all $s,t\in [0,T]$ and $x,y\in I$.
 \end{corollary}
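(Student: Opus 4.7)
The plan is to combine the two moment estimates provided by Theorems 3.1/3.2 and 4.1 into a single joint estimate on $\EE|u(t,x)-u(s,y)|^q$ and then apply the multiparameter Kolmogorov continuity criterion on $[0,T]\times I$. First, from the spatial Hölder theorem (Theorem \ref{space Holder Thm no Fourier} or \ref{space Holder Thm}) we have for every $q\ge 2$ a constant $C_q$ such that
\[
\sup_{t\in[0,T]} \EE|u(t,x)-u(t,y)|^q \le C_q |x-y|^{q\kappa_1},
\]
and from Theorem \ref{time holder Thm} we have
\[
\sup_{x\in\RR^3} \EE|u(\bar t,x)-u(t,x)|^q \le C_q |\bar t - t|^{q\kappa_2}.
\]
Note that the hypotheses of Theorem 4.1 are indeed verified: its condition (2) is precisely the conclusion of the spatial Hölder theorem (with exponent $\kappa_1$), so we may feed one into the other.

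Second, I would use the triangle inequality $u(t,x)-u(s,y) = (u(t,x)-u(t,y)) + (u(t,y)-u(s,y))$, together with $(a+b)^q \le 2^{q-1}(a^q+b^q)$, to obtain the joint moment bound
\[
\EE|u(t,x)-u(s,y)|^q \le C_q\bigl(|x-y|^{q\kappa_1} + |t-s|^{q\kappa_2}\bigr)
\]
for every $q\ge 2$ and all $(t,x),(s,y) \in [0,T]\times I$.

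Third, I would invoke the multiparameter version of Kolmogorov's continuity criterion on the four-dimensional parameter space $[0,T]\times I$. Fix $\kappa<\kappa_1$ and $\kappa'<\kappa_2$, and choose $q$ so large that $q\kappa_1 - q\kappa > 4$ and $q\kappa_2 - q\kappa' > 4$; such a $q$ exists because $\kappa<\kappa_1$ and $\kappa'<\kappa_2$ are strict. Then the joint bound above, after an anisotropic rescaling that equates the two exponents (equivalently, writing $|x-y|^{q\kappa_1} = (|x-y|^{\kappa_1/\kappa})^{q\kappa}$ and $|t-s|^{q\kappa_2} = (|t-s|^{\kappa_2/\kappa'})^{q\kappa'}$), yields the hypotheses of the standard multiparameter Kolmogorov criterion, which produces a version of $u$ satisfying
\[
|u(t,x)-u(s,y)| \le K_{\kappa,\kappa',I}\bigl(|t-s|^{\kappa'} + |x-y|^{\kappa}\bigr)
\]
for all $(s,y),(t,x) \in [0,T]\times I$, with $K_{\kappa,\kappa',I}$ an a.s.\ finite random variable (in fact in every $L^p$).

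The only mildly delicate point is the anisotropic scaling in the Kolmogorov criterion: because the spatial and temporal exponents $\kappa_1$ and $\kappa_2$ need not agree, one applies Kolmogorov either after the change of variable $(t,x)\mapsto(t^{\kappa_1/\kappa_2},x)$ that makes the bound isotropic, or one uses the anisotropic form directly (both are standard, e.g.\ in Dalang--Sanz-Solé \cite{dalang4}). Since the bound holds for all $q\ge 2$, we obtain Hölder exponents arbitrarily close to $\kappa_1$ and $\kappa_2$ respectively, which is exactly the statement of the corollary.
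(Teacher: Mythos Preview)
Your proposal is correct and follows essentially the same approach as the paper: the paper itself does not spell out a proof, but simply states that the corollary follows from an application of Kolmogorov's continuity criterion to the moment estimates established in Theorems~\ref{space Holder Thm no Fourier}/\ref{space Holder Thm} and~\ref{time holder Thm}. Your observation that condition~(2) of Theorem~\ref{time holder Thm} is exactly the conclusion of the spatial H\"older theorem (so the hypotheses chain correctly) is the only point worth making explicit, and you have done so.
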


\section{Examples}
In this section, we give some examples of covariance functions $f$
satisfying the conditions in the previous theorems.
\subsection{Example 1}
\begin{prop}
Let $f(x)=(\rho*\frac{1}{|\cdot|^{\beta}})(x))$, where $\rho(x)$ is
a nonnegative Schwartz function defined in $\RR^3$ such that
$(\mathcal{F}^{-1}\rho)(\xi)\geq 0$ (for example, $\rho(x)=e^{-|x|^2}$) and $0 < \beta <3$. Then condition (a) of  Theorem \ref{space Holder Thm} holds
for $0 < \gamma < \min(\frac{3-\beta}{2}, 1)$.
\end{prop}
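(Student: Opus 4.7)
The plan is to verify the two parts of condition (a) of Theorem 3.2 by computing the spectral measure $\mu$ of $f$ explicitly and then exploiting the convolution/product duality of the Fourier transform together with the Riesz kernel formula in $\RR^3$.

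First I would identify the spectral measure. Since $f=\rho*|\cdot|^{-\beta}$ with $0<\beta<3$, and since the Fourier transform of $|x|^{-\beta}$ on $\RR^3$ is (up to a positive constant) $|\xi|^{\beta-3}$, the convolution theorem gives
\[
\mu(d\xi)=c_{\beta}\,\mathcal{F}\rho(\xi)\,|\xi|^{\beta-3}\,d\xi.
\]
The assumption $(\mathcal{F}^{-1}\rho)(\xi)\ge 0$, combined with $\rho$ being real, forces $\rho$ to be even and $\mathcal{F}\rho\ge 0$, so $\mu$ is a genuine nonnegative tempered measure.

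Next I would check the integrability condition \eqref{eu4}. With $\mu$ above, the integrand is $\mathcal{F}\rho(\xi)|\xi|^{\beta-3}(1+|\xi|^{2-2\gamma})^{-1}$. Near the origin $|\xi|^{\beta-3}$ is integrable in $\RR^3$ since $\beta>0$ (in spherical coordinates one gets $r^{\beta-1}dr$), and near infinity the Schwartz decay of $\mathcal{F}\rho$ absorbs everything. So \eqref{eu4} holds for any $\gamma>0$, and this part is easy.

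The main step, and the one requiring the restriction $\gamma<(3-\beta)/2$, is to check that $\mathcal{F}(|\zeta|^{2\gamma}\mu(d\zeta))$ is a nonnegative, locally integrable function. Write
\[
|\zeta|^{2\gamma}\mu(d\zeta)=c_{\beta}\,\mathcal{F}\rho(\zeta)\,|\zeta|^{-(3-\beta-2\gamma)}\,d\zeta.
\]
The hypothesis $\gamma<\min(\tfrac{3-\beta}{2},1)$ guarantees $0<3-\beta-2\gamma<3$, so I can apply the Riesz kernel Fourier identity again to get $\mathcal{F}(|\cdot|^{-(3-\beta-2\gamma)})=c'\,|x|^{-(\beta+2\gamma)}$. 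Using $\mathcal{F}(\mathcal{F}\rho)=(2\pi)^{3}\rho(-\cdot)=(2\pi)^{3}\rho$ (since $\rho$ is even) and the product-to-convolution rule, I obtain
\[
\mathcal{F}\bigl(|\zeta|^{2\gamma}\mu(d\zeta)\bigr)(x)=C\,(\rho*|\cdot|^{-(\beta+2\gamma)})(x).
\]
This is manifestly nonnegative because $\rho\ge 0$ and the Riesz kernel is positive; and it is locally integrable because $|x|^{-(\beta+2\gamma)}$ is locally integrable in $\RR^3$ exactly when $\beta+2\gamma<3$, which is precisely the hypothesis, and convolution with the Schwartz function $\rho$ preserves (in fact improves) this property.

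The only delicate point is keeping track of Fourier transform normalizations and justifying the Riesz kernel identity as an equality of tempered distributions in the regime $0<3-\beta-2\gamma<3$; once that is done, both assertions of condition (a) follow immediately.
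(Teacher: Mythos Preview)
Your proposal is correct and follows essentially the same approach as the paper: identify the spectral measure via the convolution theorem and the Riesz kernel formula, check the integrability condition using the Schwartz decay of $\mathcal{F}\rho$, and compute $\mathcal{F}(|\zeta|^{2\gamma}\mu)$ as $C\,\rho*|\cdot|^{-(\beta+2\gamma)}$, which is nonnegative and locally integrable precisely when $\beta+2\gamma<3$. You are in fact slightly more careful than the paper in noting why $\rho$ must be even (so that $\mathcal{F}\rho$ and $\mathcal{F}^{-1}\rho$ agree up to a constant) and in justifying local integrability of the convolution.
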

\begin{proof}
Since $\rho$ is a Schwartz function, the spectral measure can be
explicitly expressed as
\begin{equation}
\mu(d\xi)=C(\mathcal{F}^{-1}\rho)(\xi)\frac{1}{|\xi|^{3-\beta}}d\xi
\end{equation}
for some constant $C$ which only depends on $\beta$. So
\begin{eqnarray*}
\int_{\RR^3}\frac{\mu(d\xi)}{1+|\xi|^{2-2\gamma}}=\int_{\RR^3}C(\mathcal{F}^{-1}\rho)(\xi)\frac{1}{|\xi|^{3-\beta}}\frac{1}{1+|\xi|^{2-2\gamma}}d\xi<\infty
\end{eqnarray*}
since $(\mathcal{F}^{-1}\rho)(\xi)$ is rapidly decreasing. The
Fourier transform of the tempered measure $|\xi|^{2\gamma}\mu(d\xi)$
is
\begin{eqnarray*}
(\mathcal{F}(|\xi|^{2\gamma}\mu(d\xi)))(x)=C
\mathcal{F}((\mathcal{F}^{-1}\rho)(\xi)\frac{1}{|\xi|^{3-\beta-2\gamma}}d\xi)(x)=(\rho*\frac{1}{|\cdot|^{\beta+2\gamma}})(x)\,,
\end{eqnarray*}
which is a nonnegative locally integrable function, and the last
equality holds when $3-\beta-2\gamma>0$.  So the condition  (a)  is satisfied.
\end{proof}

\subsection{The Riesz kernel}

Before giving next example, we recall  two inequalities  from Dalang and Sanz-Sol\'e  \cite{dalang4}.

Let $d$ be a positive integer. Let  $\xi$, $\eta$ be two unit vectors in $\RR^d$ and let  $u$ be any
point in $\RR^d$.  Suppose $a$, $b$ are positive numbers with $a+b
\in (0,d)$. Then we have for any $h \in \RR$
\begin{equation}\label{increment a}
|u+h\xi|^{a+b-d}-|u|^{a+b-d}=|h|^b\int_{\RR^d}dw
|u-hw|^{a-d}(|w+\xi|^{b-d}-|w|^{b-d}),
\end{equation}
and
\begin{eqnarray}
&&\left||u+h\xi+h\eta|^{a+b-d}-|u+h\xi|^{a+b-d}-|u+h\eta|^{a+b-d}+|u|^{a+b-d}\right| \notag\\
&&\!\!\!\!\! \leq |h|^b \int_{\RR^d}dw
|u-hw|^{a-d}\left||w+h\xi+h\eta|^{b-d}-|w+h\xi|^{b-d}-|w+h\eta|^{b-d}+|w|^{b-d}\right|.
\label{increment e}
\end{eqnarray}

\begin{prop}
Let $f(x)=|x|^{-\beta}$, $0<\beta<2$. Then
 $f$ satisfies condition (a)  in Theorem \ref{space Holder Thm} for any $\gamma \in
(0,\frac{2-\beta}{2})$ and $f$ also satisfies  conditions (1),
\eref{time cond 1} and \eref{time cond 2} in Theorem \ref{time holder Thm}
for  $\nu = 2-\beta$,  any $0 < \rho_1 < \min(2-\beta,
1)$ and  $0 < \rho_2 < 2-\beta$.
\end{prop}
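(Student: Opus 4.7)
The plan is to verify each condition in turn, leveraging the explicit form of the Riesz kernel and its spectral measure. First I would recall the well-known identity $\mathcal{F}(|\cdot|^{-\beta}) = c_\beta |\cdot|^{\beta-3}$ in $\RR^3$ for $0<\beta<3$, so the spectral measure is $\mu(d\xi) = c_\beta |\xi|^{\beta-3}\,d\xi$. Condition (a) of Theorem \ref{space Holder Thm} then reduces to two elementary checks. For the integrability condition \eref{eu4}, passing to polar coordinates gives $\int_0^\infty r^{\beta-1}/(1+r^{2-2\gamma})\,dr$, convergent at infinity exactly when $\beta - 2 + 2\gamma < 0$, i.e.\ $\gamma < (2-\beta)/2$, and convergent at the origin since $\beta > 0$. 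For the nonnegativity condition, the measure $|\zeta|^{2\gamma}\mu(d\zeta) = c_\beta|\zeta|^{2\gamma+\beta-3}\,d\zeta$ is of Riesz type with exponent $2\gamma+\beta < 3-\beta+\beta = 3$, so its Fourier transform is (up to a positive constant) $|x|^{-(2\gamma+\beta)}$, which is nonnegative and locally integrable.

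Next, condition (1) of Theorem \ref{time holder Thm} follows by direct computation: $\int_{|z|\le h} |z|^{-\beta-1}\,dz = 4\pi \int_0^h r^{1-\beta}\,dr = C h^{2-\beta}$, which gives $\nu = 2-\beta$. For conditions \eref{time cond 1} and \eref{time cond 2}, I plan to apply the key inequalities \eref{increment a} and \eref{increment e} in dimension $d=3$ with $a+b=3-\beta$, where $a,b>0$ are to be chosen. The point is that $s(\xi+\eta)+h(\xi+\eta)$ and $s(\xi+\eta)+h\eta$ differ by $h\xi$, so with $u = s\xi+(s+h)\eta$ the first-order increment becomes $|u+h\xi|^{-\beta}-|u|^{-\beta}$, and \eref{increment a} bounds this by $|h|^b\int_{\RR^3}|u-hw|^{a-3}\bigl||w+\xi|^{b-3}-|w|^{b-3}\bigr|\,dw$. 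Similarly, the second-order increment in \eref{time cond 2} can be rewritten as the mixed increment $|u+h\xi+h\eta|^{-\beta}-|u+h\xi|^{-\beta}-|u+h\eta|^{-\beta}+|u|^{-\beta}$ (after adding and subtracting terms to handle the $s^2$ vs.\ $s(s+h)$ vs.\ $(s+h)^2$ prefactors), to which \eref{increment e} can be applied. Any leftover terms with prefactor differences like $(s+h)^2 - s(s+h) = sh$ can be bounded by lower-order pieces using condition (1).

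The main obstacle will be controlling the spherical double integrals that arise on the right-hand side. After applying \eref{increment a} or \eref{increment e}, integrating against $s\sigma(d\xi)\sigma(d\eta)\,ds$ (resp.\ $s^2\sigma(d\xi)\sigma(d\eta)\,ds$) and exchanging the order of integration, the inner integral becomes $\int_0^T\int_{S^2\times S^2} s^k |s\xi+(s+h)\eta - hw|^{a-3}\sigma(d\xi)\sigma(d\eta)\,ds$, which must be shown to be bounded by a locally integrable function of $w$ so that the outer integral against $||w+\xi|^{b-3}-|w|^{b-3}|$ (resp.\ the corresponding second-order increment on the sphere) converges. The choice of parameters $a,b$ must be threaded: $b$ controls the exponent of $h$, while $a$ must be large enough to keep the radial integrals finite. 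Choosing $b$ just below $\min(2-\beta,1)$ for \eref{time cond 1} and just below $2-\beta$ for \eref{time cond 2} (with $a = 3-\beta-b$ accordingly), then invoking the spherical integral estimates collected in Section 7 of the paper, should yield the stated exponents $\rho_1 < \min(2-\beta,1)$ and $\rho_2 < 2-\beta$. The strict inequality in $\rho_2$ reflects that the borderline case produces a logarithmic divergence that must be absorbed into an arbitrarily small loss in the exponent.
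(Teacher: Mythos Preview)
Your proposal is correct and follows essentially the same route as the paper: the same Fourier computations for condition (a), the same direct integration for condition (1), and the same application of \eref{increment a} and \eref{increment e} with $b=\rho_1$ (resp.\ $b=\rho_2$) and $a=3-\beta-b$ to extract the powers of $h$. One point to sharpen: in your ``main obstacle'' step you cannot simply separate the $w$-integral from the spherical integral, since the factor $\bigl||w+\xi|^{b-3}-|w|^{b-3}\bigr|$ still depends on $\xi$; the paper handles this by splitting into $|w|\le 3$ and $|w|>3$, and on the near region shifting $w\mapsto w-\xi$ in one of the two resulting pieces before invoking the Fourier identity of Lemma~\ref{int identity of GGf} to evaluate the spherical integrals.
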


\begin{proof}
Let us first check condition (a) in Theorem \ref{space Holder Thm}. Since
$f(x)=|x|^{-\beta}$, we have $\mu(d\xi)=C|\xi|^{-3+\beta}d\xi$.  Then it is
easy to see that
\begin{equation*}
\int_{\RR^3}\frac{\mu(d\xi)}{1+|\xi|^{2-2\gamma}} < \infty,
\end{equation*}
since $0<\gamma < \frac{2-\beta}{2}$, and we have
\begin{eqnarray*}
\mathcal{F}\left(|\xi|^{2\gamma}\mu(d\xi)\right)(x)=C\mathcal{F}(|\xi|^{-3+\beta+2\gamma}d\xi)(x)=C|x|^{-(\beta+2\gamma)}
\end{eqnarray*}
for some positive constant $C$, so the above expression is
nonnegative. So,  condition (a) in Theorem \ref{space Holder Thm} holds.

To verify   condition (1) in Theorem \ref{time holder Thm}, we notice
\begin{eqnarray*}
\int_{|z|\leq h}\frac{f(z)}{|z|}dz=\int_{|z|\leq
h}|z|^{-\beta-1}dz=C h^{2-\beta}.
\end{eqnarray*}
So condition (1) in Theorem \ref{time holder Thm} is satisfied with
$\nu = 2-\beta$.

We turn to   condition \eref{time cond 1}. We apply \eref{increment a} with
$b=\rho_1<\min((2-\beta),1)$, $d=3$, $a=3-\rho_1-\beta$,
 $u=s(\xi+\eta)+h\eta$ to get
\begin{eqnarray*}
&&\int_0^T \int_{S^2}\int_{S^2}s
\left|f\left(s(\xi+\eta)+h(\xi+\eta)\right)-f\left(s(\xi+\eta)+h\eta\right)\right|\sigma(d\xi)\sigma(d\eta)ds\\
&\leq&h^{\rho_1}\int_0^T \int_{S^2 \times S^2} s \int_{\RR^3} dw
\left|s\xi+(s+h)\eta-hw\right|^{-\rho_1-\beta}\left||w+\xi|^{\rho_1-3}-|w|^{\rho_1-3}\right|\sigma(d\xi)\sigma(d\eta)ds\\
&\leq&h^{\rho_1}\int_0^T \int_{S^2 \times S^2} s \int_{|w|\leq 3} dw
\left|s\xi+(s+h)\eta-hw\right|^{-\rho_1-\beta}|w+\xi|^{\rho_1-3}\sigma(d\xi)\sigma(d\eta)ds\\
&&+h^{\rho_1}\int_0^T \int_{S^2 \times S^2} s \int_{|w|\leq 3} dw
\left|s\xi+(s+h)\eta-hw\right|^{-\rho_1-\beta}|w|^{\rho_1-3}\sigma(d\xi)\sigma(d\eta)ds\\
&&+h^{\rho_1}\int_0^T \int_{S^2 \times S^2} s \int_{|w|>3} dw
\left|s\xi+(s+h)\eta-hw\right|^{-\rho_1-\beta}\left||w+\xi|^{\rho_1-3}-|w|^{\rho_1-3}\right|\sigma(d\xi)\sigma(d\eta)ds\\
&:=&h^{\rho_1}(I_1+I_2+I_3).
\end{eqnarray*}
For $I_1$, making the change of variable $w+h \rightarrow w$, using the
Fourier transform (see Lemma \ref{int identity of GGf}) and noting
that $I_1$ is real positive, we can write:
\begin{eqnarray*}
 I_1 &\leq&\int_0^T \int_{S^2\times S^2} s \int_{|w|\leq 4}
\left|(s+h)\xi+(s+h)\eta-hw\right|^{-\rho_1-\beta}|w|^{\rho_1-3}dw\sigma(d\xi)\sigma(d\eta)ds\\
&=&C\int_0^T \int_{|w|\leq 4}\int_{\RR^3\times
\RR^3}\frac{s}{(s+h)^2}|y+z-hw|^{-\rho_1-\beta}G(s+h,dy)G(s+h,dz)|w|^{\rho_1-3}dw
ds \\
&= &C\int_0^T \int_{\RR^3}
\frac{s}{(s+h)^2}\frac{\left(\sin(s+h)|\xi|\right)^2}{|\xi|^2}|\xi|^{-3+\rho_1+\beta}e^{i\langle
\xi,hw\rangle}d\xi ds \int_{|w|\leq 4} |w|^{\rho_1-3}dw.
\end{eqnarray*}
Then using the change of variable $(s+h)\xi=\eta$ and the bound
$|e^{i\langle \xi, hw\rangle}|\leq 1$, by direct calculation we see
that $I_1<\infty$.

For $I_2$, we do the same calculation, but we do not need the
change of variable for $w$. Let $2 \varepsilon < 2-\beta-\rho_1$, then
\begin{eqnarray*}
 I_2 &\leq&\int_0^T \int_{\RR^3}
\frac{1}{s+h}\frac{\left|\sin(s+h)|\xi|\sin
s|\xi|\right|}{|\xi|^2}|\xi|^{-3+\rho_1+\beta}d\xi ds \int_{|w|\leq 3}
|w|^{\rho_1-3}dw\\
&\leq&C \int_0^T \int_{|\xi|\leq 1} s|\xi|^{-3+\rho_1+\beta}d\xi ds
+C \int_0^T \int_{|\xi|>1}\frac{1}{s+h}\frac{(s+h)^{\varepsilon}
s^{\varepsilon}
|\xi|^{2\varepsilon}}{|\xi|^2}|\xi|^{-3+\rho_1+\beta}d\xi ds
\end{eqnarray*}
which is finite by direct calculation.

For $I_3$ we can write

\begin{eqnarray*}
I_3 &=&\int_0^T \int_{|w|>3} \int_{S^2\times S^2} s
\left|s\xi+(s+h)\eta-hw\right|^{-\rho_1-\beta}\sigma(d\xi)\sigma(d\eta)dw\left|\int_0^1
\frac{d}{d\lambda}|w+\lambda \xi|^{\rho_1-3}d\lambda\right| ds \\
&\leq& C\int_0^T  \int_{|w|>3}\int_{S^2\times S^2} s
\left|s\xi+(s+h)\eta-hw\right|^{-\rho_1-\beta}\sigma(d\xi)\sigma(d\eta)
(\int_0^1 |w|^{\rho_1-4}d\lambda ) dw ds \,,
\end{eqnarray*}
where the inequality holds because $|w|> 3$, $0\leq \lambda \leq 1$ and
$|\xi|=1$. We can show that $I_3<\infty$ similarly to the proof for $I_2$ using the
fact that $\int_{|w|> 3}|w|^{\rho_1-4}dw < \infty $ since
$\rho_1<1$.

It is easy to see that $I_1$ $I_2$ $I_3$ are finite uniformly
 for $0<h \leq 1$. Therefore,   condition \eref{time cond 1} is satisfied with $0 < \rho_1 < \min
(2-\beta,1)$.

For  condition \eref{time cond 2}, applying \eref{increment e}, with
$d=3$, $b=\rho_2  < 2-\beta$, $a=3-\rho_2-\beta$,
 $u=s(\xi+\eta)$, yields
\begin{eqnarray*}
&&\int_0^T \int_{S^2}\int_{S^2}
\left|f\left(s(\xi+\eta)+h(\xi+\eta)\right)-f\left(s(\xi+\eta)+h\xi\right)-f\left(s(\xi+\eta)+h\eta\right)+f\left(s(\xi+\eta)\right)\right|\\
&&\times s^2\sigma(d\xi)\sigma(d\eta)\\
&\leq& h^{\rho_2} \int_0^T\int_{S^2\times
S^2}s^2\int_{\RR^3}\left|s(\xi+\eta)-hw\right|^{-\rho_2-\beta}\\
&&\qquad\times\left||w+\xi+\eta|^{\rho_2-3}-|w+\xi|^{\rho_2-3}-|w+\eta|^{\rho_2-3}+|w|^{\rho_2-3}\right|dw\sigma(d\xi)\sigma(d\eta)ds\\
&\leq& h^{\rho_2} \Big(\int_0^T \int_{S^2\times S^2}s^2\int_{|w|\leq
3}\left|s(\xi+\eta)-hw\right|^{-\rho_2-\beta}|w+\xi+\eta|^{\rho_2-3}dw\sigma(d\xi)\sigma(d\eta)ds\\
&&+\int_0^T \int_{S^2\times S^2}s^2\int_{|w|\leq
3}\left|s(\xi+\eta)-hw\right|^{-\rho_2-\beta}|w+\xi|^{\rho_2-3}dw\sigma(d\xi)\sigma(d\eta)ds\\
&&+\int_0^T \int_{S^2\times S^2}s^2\int_{|w|\leq
3}\left|s(\xi+\eta)-hw\right|^{-\rho_2-\beta}|w+\eta|^{\rho_2-3}dw\sigma(d\xi)\sigma(d\eta)ds\\
&&+\int_0^T \int_{S^2\times S^2}s^2\int_{|w|\leq
3}\left|s(\xi+\eta)-hw\right|^{-\rho_2-\beta}|w|^{\rho_2-3}dw\sigma(d\xi)\sigma(d\eta)ds\\
&&+\int_0^T \int_{S^2\times S^2}s^2\int_{|w|>
3}\left|s(\xi+\eta)-hw\right|^{-\rho_2-\beta}\\
&&\qquad\times\left||w+\xi+\eta|^{\rho_2-3}- |w+\xi|^{\rho_2-3}- |w+\eta|^{\rho_2-3}+ |w|^{\rho_2-3}\right|dw\sigma(d\xi)\sigma(d\eta)ds\Big)\\
&:= & h^{\rho_2}(\sum_{i=1}^5 L_i).
\end{eqnarray*}
For $L_i$, $i=1,2,3,4$, we can proceed exactly in the same way as
for the integrals $I_1$, $I_2$ above. For $L_5$, we can express
\begin{equation*}
|w+\xi+\eta|^{\rho_2-3}- |w+\xi|^{\rho_2-3}- |w+\eta|^{\rho_2-3}+
|w|^{\rho_2-3}=\int_0^1\int_0^1 \frac{\partial^2}{\partial \lambda
\partial \mu}|w+\lambda \xi+\mu \eta|^{\rho_2-3}d\lambda d\mu\,,
\end{equation*}
and since $|w|>3$, $|\eta|=1$, it is easy to see that
\begin{equation*}
\left|\frac{\partial^2}{\partial \lambda \partial
\mu}|w+\lambda \xi+\mu \eta|^{\rho_2-3}\right|\leq C
|w|^{\rho_2-5}.
\end{equation*}
So $\int_{|w|>3}|w|^{\rho_2-5}dw$ is finite, and $L_5$ is finite, by
the same argument as for $I_3$.

So  condition \eref{time cond 2} is satisfied with $0 < \rho_2 <
2-\beta$. This completes the proof.
\end{proof}

\medskip
Notice  that, with the notation of Corollary 4.2, for the Riesz kernel we can take $\kappa_1=\kappa_2 <\frac {2-\beta }2$, and we deduce  the local H\"older continuity  of the solution $u$ in space and time  variables of order $\kappa < \min(\gamma_1, \gamma_2, \frac {2-\beta}2 )$. In this way we recover the result by Dalang and Sanz-Sol\'e   \cite{dalang4}.

\subsection{The Bessel Kernel}
\begin{prop}
Let $f$ be the Bessel kernel given by
\begin{equation}\label{Bessel ker}
f(x)=\int_0^{\infty}w^{\frac{\alpha-5}{2}}e^{-w}e^{-\frac{|x|^2}{4w}}dw
\end{equation}
for some $\alpha>1$. Then $f$ satisfies \eref{H1}, \eref{sp cond
1}, \eref{sp cond 2}, \eref{time cond 1}, \eref{time cond 2} and
 condition (1) in Theorem \ref{time holder Thm} for any $0 < \gamma,
\rho_1, \nu < \min(\alpha-1, 1)$ and $0 < \gamma^{\prime}, \rho_2 <
\min(\alpha-1,2)$.
\end{prop}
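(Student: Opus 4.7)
The plan is to verify each of the six conditions by exploiting the Gaussian integral representation \eref{Bessel ker} and reducing everything to explicit Gaussian estimates. In every case we substitute \eref{Bessel ker}, swap orders of integration (justified by Tonelli's theorem after taking absolute values), and then exploit the elementary interpolation bounds
\[
|e^{-a}-e^{-b}|\le 2^{1-\theta}|a-b|^{\theta}\quad(\theta\in[0,1]),
\]
together with a second-order analogue for $|e^{-a}-2e^{-b}+e^{-c}|$ (obtained by interpolating between the trivial $L^\infty$ bound and a second-derivative bound on $x\mapsto e^{-|x|^2/(4w)}$) to produce pointwise estimates of $|f(z+w)-f(z)|$ and $|f(z+w)+f(z-w)-2f(z)|$ of the form $C|w|^{\theta}\,g_\theta(z,v)$ with an explicit Gaussian function $g_\theta$.

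For \eref{H1} we simply exchange integrals and use
\[
\int_{|x|\le 1}\frac{e^{-|x|^2/(4w)}}{|x|}\,dx\le C\min(w,1),
\]
so that the remaining $w$-integral $\int_0^\infty w^{(\alpha-5)/2}e^{-w}\min(w,1)\,dw$ is finite iff $\alpha>1$. The same computation, restricted to $\{|x|\le h\}$, yields $\int_{|z|\le h}f(z)/|z|\,dz\le C h^{2\nu}\int_0^\infty w^{(\alpha-5)/2-\nu+1}e^{-w}\,dw$, which by splitting the $w$-integral at $w\sim h^2$ and optimizing $\nu$ gives condition (1) of Theorem \ref{time holder Thm} for $0<\nu<\min(\alpha-1,1)$. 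For \eref{sp cond 1} and \eref{sp cond 2}, after applying the pointwise first- and second-order Gaussian increment bounds and integrating against $|z|^{-1}\mathbf{1}_{\{|z|\le 2T\}}\,dz$, one is left with integrals of the shape $|w|^{\theta}\int_0^\infty w^{(\alpha-5)/2}e^{-w}\psi_\theta(w,|w|)\,dw$; splitting the $w$-range at $w\sim|w|^2$ and choosing $\theta$ optimally yields the desired bounds for any $\gamma\in(0,\min(\alpha-1,1))$ and $\gamma'\in(0,\min(\alpha-1,2))$.

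Conditions \eref{time cond 1} and \eref{time cond 2} are treated analogously, the only new ingredient being the change of variables $y=s\xi$, $z=s\eta$, which identifies $s\,\sigma(d\xi)\,ds$ with a constant multiple of $|y|^{-1}\,dy$ on the ball $\{|y|\le T\}$ (and $s^2\sigma(d\xi)\sigma(d\eta)\,ds$ with the corresponding product measure). The resulting three-dimensional Gaussian integrals are again estimated by the same first- and second-order increment bounds applied to $e^{-|\cdot|^2/(4w)}$, giving \eref{time cond 1} for $\rho_1\in(0,\min(\alpha-1,1))$ and \eref{time cond 2} for $\rho_2\in(0,\min(\alpha-1,2))$.

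The main obstacle throughout is the bookkeeping required to keep the resulting $w$-integral $\int_0^\infty w^{(\alpha-5)/2}e^{-w}(\cdot)\,dw$ convergent while simultaneously extracting a factor of $|w|^{\theta}$ or $h^{\nu}$ of the correct order: the lower limit $w\to0$ produces the restriction $\theta<\alpha-1$, while the intrinsic regularity of the Gaussian family caps the admissible first-order exponent at $1$ and the second-order exponent at $2$. Apart from this careful exponent tracking, each step is a routine Gaussian computation.
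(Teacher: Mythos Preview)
Your treatment of \eref{H1}, condition (1), \eref{sp cond 1} and \eref{sp cond 2} is essentially the paper's approach: pointwise Gaussian increment bounds, then convergence of the $w$-integral forces the exponent restrictions. That part is fine.

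The gap is in your handling of \eref{time cond 1} and \eref{time cond 2}. The change of variables $y=s\xi$, $z=s\eta$ does \emph{not} turn $s^2\sigma(d\xi)\sigma(d\eta)\,ds$ into a product measure on a ball: since $|y|=|z|=s$, the image measure lives on the five-dimensional set $\{(y,z):|y|=|z|\le T\}$, not on $B_T\times B_T$. Likewise, $s\,\sigma(d\xi)\sigma(d\eta)\,ds$ becomes $|y|^{-1}\,dy\,\sigma(d\eta)$ with $\eta$ still constrained to $S^2$, so you cannot simply recycle the $|z|^{-1}\mathbf{1}_{\{|z|\le 2T\}}$ estimates from \eref{sp cond 1}--\eref{sp cond 2}. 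After your pointwise increment bound you are left with spherical integrals of the form
\[
\int_0^T\!\!\int_{S^2}\!\!\int_{S^2} s\, e^{-|(s+\lambda h)\xi+(s+h)\eta|^2/(8w)}\,\sigma(d\xi)\,\sigma(d\eta)\,ds,
\]
and it is precisely these that require a new idea. The paper evaluates them via Fourier transform (Lemma~\ref{int identity of GGf}), turning the double spherical integral into $\int_{\RR^3}\frac{\sin a|\zeta|}{|\zeta|}\frac{\sin b|\zeta|}{|\zeta|}\,w^{3/2}e^{-2w|\zeta|^2}\,d\zeta$, which is then bounded by $Cw^{1-\varepsilon}$ (or $Cw$ in the second-order case). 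One can alternatively compute the inner $\eta$-integral explicitly (it equals $\frac{4\pi w}{ab}\big(e^{-(a-b)^2/(8w)}-e^{-(a+b)^2/(8w)}\big)$ with $a=s+\lambda h$, $b=s+h$) and then integrate in $s$, but either way an honest estimate of this coupled spherical integral is needed, and your sketch does not supply one.
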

\begin{proof}
First let us check condition (\ref{H1}). We have
\begin{eqnarray*}
\int_{|x|\leq
1}\frac{f(x)}{|x|}dx=\int_0^{\infty}w^{\frac{\alpha-5}{2}}e^{-w}\int_{|x|\leq
1}\frac{e^{-\frac{|x|^2}{4w}}}{|x|}dxdw.
\end{eqnarray*}
The change of variable $x=2\sqrt{w}y$ gives
\begin{eqnarray*}
\int_{|x|\leq
1}\frac{f(x)}{|x|}dx=4\int_0^{\infty}w^{\frac{\alpha-3}{2}}e^{-w}\int_{|y|\leq
\frac{1}{2\sqrt{w}}}\frac{e^{-|y|^2}}{|y|}dy\leq
C\int_0^{\infty}w^{\frac{\alpha-3}{2}}e^{-w}dw<\infty
\end{eqnarray*}
because $\alpha>1$. To check condition \eref{sp cond 1}, we note
that for $a,b \geq 0$, we have $|e^{-a}-e^{-b}|\leq
|a-b|^{\gamma}(e^{-a}\vee e^{-b})$, for any $0 \le \gamma \leq 1$. So
\begin{eqnarray*}
|e^{-\frac{|z+y|^2}{4w}}- e^{-\frac{|z|^2}{4w}}|&\leq&
(\frac{1}{4w})^{\gamma}\left||z+y|^2-|z|^2\right|^{\gamma}\left(e^{-\frac{|z+y|^2}{4w}}\vee
e^{-\frac{|z|^2}{4w}}\right)\\
&\leq& C
|y|^{\gamma}(|z+y|^{\gamma}+|z|^{\gamma})\frac{1}{w^{\gamma}}\left(e^{-\frac{|z+y|^2}{4w}}+e^{-\frac{|z|^2}{4w}}\right).
\end{eqnarray*}
 As a consequence
\begin{eqnarray*}
&&\int_{|z|\leq
2T}\frac{|f(z+y) -f(z)|}{|z|}dz\\
&\leq&|y|^{\gamma}\int_0^{\infty}w^{\frac{\alpha-5}{2}-\gamma}e^{-w}
\Big(\int_{|z|\leq
2T}(|z+y|^{\gamma}+|z|^{\gamma})\frac{e^{-\frac{|z+y|^2}{4w}}}{|z|}dz
\\
&&+\int_{|z|\leq
2T}(|z+y|^{\gamma}+|z|^{\gamma})\frac{e^{-\frac{|z|^2}{4w}}}{|z|}dz\Big)dw\\
&:=&|y|^{\gamma}\int_0^{\infty}w^{\frac{\alpha-5}{2}-\gamma}e^{-w}\left(I(y)+J(y)\right)dw.
\end{eqnarray*}
For the integral $I(y)$, with the change of variable $z=\sqrt{w}x-y$,
we have
\begin{eqnarray*}
I(y)&=&\int_{|x-\frac{y}{\sqrt{w}}|\leq
\frac{2T}{\sqrt{w}}}w^{\frac{\gamma+2}{2}}\left(\frac{|x|^{\gamma}}{|x-\frac{y}{\sqrt{w}}|}+|x-\frac{y}{\sqrt{w}}|^{\gamma-1}\right)e^{-\frac{|x|^2}{4}}dx\\
&\leq&\int_{\RR^3}w^{\frac{\gamma+2}{2}}\left(\frac{|x|^{\gamma}}{|x-\frac{y}{\sqrt{w}}|}+|x-\frac{y}{\sqrt{w}}|^{\gamma-1}\right)e^{-\frac{|x|^2}{4}}dx\\
&\leq&C w^{\frac{\gamma+2}{2}}\,,
\end{eqnarray*}
where the last inequality follows from the fact that
$|x|^{\gamma}e^{-\frac{|x|^2}{4}}\leq C e^{-\frac{|x|^2}{8}}$ and
Lemma 17 in \cite{Nualart Rovira Tindel}. The term  $J(y)$  can be
estimated in the same way  using the change of variable $z=\sqrt{w}y$,
and we have
\begin{eqnarray*}
J(y)\leq C w^{\frac{\gamma+2}{2}}.
\end{eqnarray*}
Hence,
\begin{eqnarray*}
\int_{|z|\leq 2T}\frac{|f(z+y)-f(z)|}{|z|}dz\leq C |y|^{\gamma}
\int_0^{\infty}w^{\frac{\alpha-\gamma-3}{2}}e^{-w}dw\leq C
|y|^{\gamma}
\end{eqnarray*}
for any $0<\gamma < \alpha-1$. So condition \eref{sp cond 1} is
satisfied with $0< \gamma < \min (\alpha-1,1)$.

To check  condition \eref{sp cond 2}, note that
\begin{equation*}
|f(z+y)+f(z-y)-2f(z)|=\left|\int_0^1\int_0^1\frac{\partial^2}{\partial
\lambda \partial \mu}f(z-(\lambda-\mu)y)d\lambda d\mu\right|.
\end{equation*}
So we have
\begin{eqnarray*}
&&\left|e^{-\frac{|z+y|^2}{4w}}+e^{-\frac{|z-y|^2}{4w}}-2e^{-\frac{|z|^2}{4w}}\right|\\
&\leq& \int_0^1\int_0^1 \left(e^{-\frac{|z-\lambda y+\mu y|^2}{4w}}\frac{1}{4w^2}\langle z-\lambda y+\mu y, y\rangle^2+e^{-\frac{|z-\lambda y+\mu y|^2}{4w}}\frac{1}{2w}|y|^2\right)d\lambda d\mu\\
&\leq&\int_0^1\int_0^1 e^{-\frac{|z-\lambda y+\mu y|^2}{4w}}\left(\frac{1}{4w^2}|z-\lambda y+\mu y|^2|y|^2+\frac{1}{2w}|y|^2\right)d\lambda d\mu\\
&\leq&C\int_0^1\int_0^1\frac{|y|^2}{w}e^{-\frac{|z-(\lambda-\mu)y|^2}{8w}}d\lambda
d\mu.
\end{eqnarray*}
Here we have used the fact that $x^2e^{-x^2}\leq C
e^{-\frac{x^2}{2}}$. By considering the cases $\frac{|y|}{\sqrt{w}}\leq
1$ and $\frac{|y|}{\sqrt{w}}>1$, we obtain
\begin{eqnarray*}
&&\left|e^{-\frac{|z+y|^2}{4w}}+e^{-\frac{|z-y|^2}{4w}}-2e^{-\frac{|z|^2}{4w}}\right|\\
&\leq&C\frac{|y|^{\gamma'}}{w^{\gamma'/2}}\int_0^1\int_0^1\left(e^{-\frac{|z-(\lambda-\mu)y|^2}{8w}}+e^{-\frac{|z+y|^2}{4w}}+e^{-\frac{|z-y|^2}{4w}}+2e^{-\frac{|z|^2}{4w}}\right)d\lambda
d\mu\,
\end{eqnarray*}
for any $0 \le \gamma^{\prime}\leq 2$. So we have
\begin{eqnarray*}
&&\int_{|z|\leq 2T}\frac{|f(z+y)+f(z-y)-2f(z)|}{|z|}dz\\
&\leq& C |y|^{\gamma^{\prime}}\int_{|z|\leq 2T}\int_0^{\infty}dw
\int_0^1 d\lambda \int_0^1 d\mu
w^{\frac{\alpha-5-\gamma'}{2}}e^{-w}\\
&& \times \left(e^{-\frac{|z-(\lambda-\mu)y|^2}{8w}}+e^{-\frac{|z+y|^2}{4w}}+e^{-\frac{|z-y|^2}{4w}}+2e^{-\frac{|z|^2}{4w}}\right)\frac{1}{|z|}dz.
\end{eqnarray*}
By Lemma 17 in \cite{Nualart Rovira Tindel}, we can write
\begin{eqnarray*}
\int_{|z|\leq
2T}\left(e^{-\frac{|z-(\lambda-\mu)y|^2}{8w}}+e^{-\frac{|z+y|^2}{4w}}+e^{-\frac{|z-y|^2}{4w}}+2e^{-\frac{|z|^2}{4w}}\right)\frac{1}{|z|}dz\leq
C w\,,
\end{eqnarray*}
where the constant $C$ does not depend on $y, \lambda, \mu$. Therefore,
\begin{eqnarray*}
\int_{|z|\leq 2T}\frac{|f(z+y)+f(z-y)-2f(z)|}{|z|}dz\leq C
|y|^{\gamma^{\prime}}\int_0^{\infty}w^{\frac{\alpha-3-\gamma'}{2}}e^{-w}dw\leq
C|y|^{\gamma^{\prime}}
\end{eqnarray*}
for any $0<\gamma^{\prime}<\min(\alpha-1, 2)$. As a consequence,   condition
\eref{sp cond 2} is satisfied with $0< \gamma^{\prime} <
\min(\alpha-1,2)$.

 To check   condition
(1) in Theorem \ref{time holder Thm} holds we compute
\[
\int_{|x| \le h} \frac{e^{-\frac {|x|^2}{ 4w}}}{|x|} dx=
4\pi \int_0^h e^{ -\frac {r^2}{4w}} rdr= 8\pi w \left(1- e^{-\frac {h^2}{4w}} \right)
\le C h^\nu  w^{1-\frac \nu 2}
\]
for any $0\le \nu \le 1$. This implies that
\[
\int_{|x|\leq h}\frac{f(x)}{|x|}dx \leq C
h^{\nu}\int_0^{\infty}w^{\frac{\alpha-3-\nu}{2}}e^{-w}dw
\leq C h^{\nu}
\]
for any $0< \nu <\min(\alpha-1,1)$.  So  condition (1) in Theorem
\ref{time holder Thm} is satisfied with $0< \nu <
\min(\alpha-1,1)$.

To check the condition \eref{time cond 1}, first we note that
\begin{eqnarray*}
&&\left|e^{-\frac{|x+h\xi|^2}{4w}}-e^{-\frac{|x|^2}{4w}}\right|
=\left|\int_0^1 \frac{d}{d\lambda}e^{-\frac{|x+\lambda
h\xi|^2}{4w}}d\lambda\right| =\left|\int_0^1 e^{-\frac{|x+\lambda
h\xi|^2}{4w}}\frac{\langle x+\lambda h\xi, h\xi\rangle}{2w}d
\lambda\right|\\
&\leq&C \int_0^1 e^{-\frac{|x+\lambda h\xi|^2}{4w}} \frac{|x+\lambda
h \xi|}{\sqrt{w}}\frac{h}{\sqrt{w}}d\lambda\leq C \int_0^1
e^{-\frac{|x+\lambda h\xi|^2}{8w}}\frac{h}{\sqrt{w}}d\lambda\,,
\end{eqnarray*}
where we have used the fact that $|x|e^{-x^2}\leq C
e^{-\frac{x^2}{2}}$. By considering the cases $\frac{h}{\sqrt{w}}
\leq 1$ and $\frac{h}{\sqrt{w}} > 1$, we can write
\begin{eqnarray*}
\left|e^{-\frac{|x+h\xi|^2}{4w}}-e^{-\frac{|x|^2}{4w}}\right|\leq C
\left(\frac{h}{\sqrt{w}}\right)^{\rho_1}\int_0^1\left(e^{-\frac{|x+\lambda
h\xi|^2}{8w}}+e^{-\frac{|x+h\xi|^2}{4w}}+e^{-\frac{|x|^2}{4w}}\right)d\lambda
\end{eqnarray*}
for any $\rho_1 \in [0,1]$. So we have
\begin{eqnarray*}
|f(x+h\xi)-f(x)|\leq C h^{\rho_1} \int_0^1 \int_0^{\infty}
w^{\frac{\alpha-5-\rho_1}{2}}e^{-w}\left(e^{-\frac{|x+\lambda
h\xi|^2}{8w}}+e^{-\frac{|x+h\xi|^2}{4w}}+e^{-\frac{|x|^2}{4w}}\right)dw
d\lambda.
\end{eqnarray*}
Therefore, for any $\rho_1 \in [0,1]$.
\begin{eqnarray*}
&&\int_0^T \int_{S^2}\int_{S^2}
\left|f\left(s(\xi+\eta)+h(\xi+\eta)\right)-f\left(s(\xi+\eta)+h\eta\right)\right|s\sigma(d\xi)\sigma(d\eta)ds\\
&\leq&C h^{\rho_1} \int_0^1 \int_0^{\infty}
w^{\frac{\alpha-5-\rho_1}2}   e^{-w}\int_0^T
\int_{S^2}\int_{S^2}\left(e^{-\frac{|(s+\lambda
h)\xi+(s+h)\eta|^2}{8w}}+e^{-\frac{|(s+h)(\xi+\eta)|^2}{4w}}+e^{-\frac{|s\xi+(s+h)\eta|^2}{4w}}\right)\\
&&\times s\sigma(d\xi)\sigma(d\eta)ds dw d\lambda.
\end{eqnarray*}
We claim that this quantity is bounded by $Ch^{\rho_1}$ if $0<\rho_1 < \min(\alpha-1,1)$.
To show this claim, we first estimate the quantity
\begin{equation*}
I_1:=\int_0^T \int_{S^2}\int_{S^2} s e^{-\frac{|(s+\lambda
h)\xi+(s+h)\eta|^2}{8w}}\si(d\xi)\si(d\eta)ds.
\end{equation*}
Using the Fourier transform (see Lemma \ref{int identity of GGf}), the
change of variables  $\xi\sqrt{w}=\eta$, and
taking $0< \varepsilon < 1$  we obtain
\begin{eqnarray*}
I_1&=&C\int_0^T \int_{\RR^3}\frac{s}{(s+h)(s+\lambda
h)}w^{\frac{3}{2}}e^{-2 w |\xi|^2}\frac{\sin
(s+h)|\xi|}{|\xi|}\frac{\sin (s+\lambda h)|\xi|}{|\xi|}d\xi ds\\
&\leq & C  w^{\frac 32}     \int_0^T s(s+h)^{\varepsilon -1} (s+\lambda h)^{\varepsilon -1} ds
\int_{\RR^3}  e^{-2 w |\xi|^2} |\xi|^{2\varepsilon-2} d\xi \\
&\le&  C w^{1-\varepsilon} \int_{\RR^3}  e^{-2  |\eta|^2} |\eta|^{2\varepsilon-2} d\eta
\le C w^{1-\varepsilon}.
\end{eqnarray*}
Similarly, we have
\begin{equation*}
\int_0^T \int_{S^2}\int_{S^2} s e^{-\frac{|(s+
h)\xi+(s+h)\eta|^2}{4w}}\si(d\xi)\si(d\eta)ds \leq C
w^{1-\varepsilon}
\end{equation*}
and
\begin{equation*}
\int_0^T \int_{S^2}\int_{S^2} s
e^{-\frac{|s\xi+(s+h)\eta|^2}{4w}}\si(d\xi)\si(d\eta)ds \leq C
w^{1-\varepsilon} .
\end{equation*}
Therefore,
\begin{eqnarray*}
&&\sup_{0<h\leq 1}\int_0^1 \int_0^{\infty}
w^{\frac{\alpha-5-\rho_1}{2}}e^{-w}\int_0^T
\int_{S^2}\int_{S^2}\left(e^{-\frac{|(s+\lambda
h)\xi+(s+h)\eta|^2}{8w}}+e^{-\frac{|(s+h)(\xi+\eta)|^2}{4w}}+e^{-\frac{|s\xi+(s+h)\eta|^2}{4w}}\right)\\
&&\times s\sigma(d\xi)\sigma(d\eta)ds dw d\lambda\\
 &\leq&C \int_0^1
\int_0^{\infty}
w^{\frac{\alpha-5-\rho_1}{2}+1-\varepsilon}e^{-w}dw
d\lambda < \infty,
\end{eqnarray*}
and \eref{time cond 1} is  satisfied with $0<\rho_1 <
\min(\alpha-1,1))$.

To check  condition \eref{time cond 2}, we note that
\begin{eqnarray*}
&&\left|e^{-\frac{|x+h\xi+h\eta|^2}{4w}}-e^{-\frac{|x+h\xi|^2}{4w}}-e^{-\frac{|x+h\eta|^2}{4w}}+e^{-\frac{|x|^2}{4w}}\right|
=\left|\int_0^1\int_0^1\frac{\partial^2}{\partial \lambda
\partial \mu}e^{-\frac{|x+\lambda h \xi+\mu h \eta|^2}{4w}}d\lambda
d\mu\right|\\
&=&\left|\int_0^1\int_0^1e^{-\frac{|x+\lambda h \xi+\mu h
\eta|^2}{4w}}\left(\frac{1}{4w^2}\langle h\xi, x+\lambda h \xi+\mu h
\eta\rangle\langle h\eta, x+\lambda h \xi+\mu h \eta\rangle
-\frac{1}{2w}\langle h\eta, h \xi\rangle\right)d\lambda
d\mu\right|\\
&\leq&\int_0^1\int_0^1\left(e^{-\frac{|x+\lambda h \xi+\mu h
\eta|^2}{4w}}\frac{1}{4w^2}|x+\lambda h \xi+\mu h \eta|^2
h^2+e^{-\frac{|x+\lambda h \xi+\mu h
\eta|^2}{4w}}\frac{1}{2w}h^2\right)d\lambda d\mu\\
&\leq&C \int_0^1\int_0^1 e^{-\frac{|x+\lambda h \xi+\mu
h|^2}{8w}}\frac{h^2}{w}d\lambda d\mu.
\end{eqnarray*}
By considering the cases $\frac{h^2}{w}\leq 1$ and $\frac{h^2}{w}>
1$, we have for any $0\le \rho_2  \le 2$,
\begin{eqnarray*}
&&\left|e^{-\frac{|x+h\xi+h\eta|^2}{4w}}-e^{-\frac{|x+h\xi|^2}{4w}}-e^{-\frac{|x+h\eta|^2}{4w}}+e^{-\frac{|x|^2}{4w}}\right|\\
&\leq&C
(\frac{h^2}{w})^{\frac{\rho_2}{2}}\int_0^1\int_0^1\left(e^{-\frac{|x+\lambda
h \xi+\mu
h\eta|^2}{8w}}+e^{-\frac{|x+h\xi+h\eta|^2}{4w}}+e^{-\frac{|x+h\xi|^2}{4w}}+e^{-\frac{|x+h\eta|^2}{4w}}+e^{-\frac{|x|^2}{4w}}\right)d\lambda
d\mu\\
&:= &Ch^{\rho_2} w^{ -\frac {\rho2}2} q_{h,\xi,\eta}(x,w).
\end{eqnarray*}
Therefore, we obtain
\begin{eqnarray*}
|f(x+h\xi+h\eta)-f(x+h\xi)-f(x+h\eta)+f(x)|=
C h^{\rho_2}\int_0^{\infty}w^{\frac{\alpha-5-\rho_2 }{2}}e^{-w}q_{h,\xi,\eta}(x,w)dw,
\end{eqnarray*}
and
\begin{eqnarray*}
&&\int_0^T
\int_{S^2}\int_{S^2}\left|f\left(s(\xi+\eta)+h(\xi+\eta)\right)-f\left(s(\xi+\eta)+h\xi\right)-f\left(s(\xi+\eta)+h\eta\right)+f\left(s(\xi+\eta)\right)\right|\\
&& \times s^2 \sigma(d\xi)\sigma(d\eta)ds\\
&\leq&Ch^{\rho_2}\int_0^1\int_0^1\int_0^{\infty}w^{\frac{\alpha-5-\rho_2}{2}}e^{-w}\int_0^T
\int_{S^2}\int_{S^2}\Big(e^{-\frac{|(s+\lambda h)\xi+(s+\mu h
)\eta|^2}{8w}}\\
&&+e^{-\frac{|(s+h)(\xi+\eta)|^2}{4w}}+e^{-\frac{|(s+h)\xi+s\eta|^2}{4w}}+e^{-\frac{|s\xi+(s+h)\eta|^2}{4w}}+e^{-\frac{|s\xi+s\eta|^2}{4w}}\Big)s^2\sigma(d\xi)\sigma(d\eta)ds dwd\lambda
d\mu
\end{eqnarray*}
We claim that when $0<\rho_2< \min(\alpha-1,2)$, the above expression is bounded by $h^{\frac {\rho_2}2}$.
To show this claim, we first estimate the integral
\begin{eqnarray*}
I_2:=\int_0^T \int_{S^2}\int_{S^2}s^2e^{-\frac{|(s+\lambda h)\xi+(s+\mu
h)\eta|^2}{8w}}\si(d\xi)\si(d\eta)ds.
\end{eqnarray*}
Using the Fourier transform (see Lemma \ref{int identity of GGf}) and
the change of variable $\sqrt{w}\xi=\eta$, we obtain
\begin{eqnarray*}
I_2&=&\int_0^T \frac{s^2}{(s+\lambda h)(s+\mu
h)}\int_{\RR^3}w^{\frac{3}{2}}e^{-2w|\xi|^2}\frac{\sin(s+\lambda
h)|\xi|}{|\xi|}\frac{\sin(s+\mu h)|\xi|}{|\xi|}d\xi ds\\
&\leq&C \int_{\RR^3}e^{-2 |\eta|^2}\frac{w}{|\eta|^2}d\eta\leq C
w,
\end{eqnarray*}
where the constant $C$ does not depend on $\lambda$ and $\mu$.
The same estimation can be done for each of the other integrals  and we obtain
\begin{equation*}
\int_0^T
\int_{S^2}\int_{S^2}s^2\left(e^{-\frac{|(s+h)(\xi+\eta)|^2}{4w}}+e^{-\frac{|(s+h)\xi+s\eta|^2}{4w}}+e^{-\frac{|s\xi+(s+h)\eta|^2}{4w}}+e^{-\frac{|s\xi+s\eta|^2}{4w}}\right)\si(d\xi)\si(d\eta)ds\leq
C w.
\end{equation*}
Thus,
\begin{eqnarray*}
&&\int_0^T
\int_{S^2}\int_{S^2}\left|f\left(s(\xi+\eta)+h(\xi+\eta)\right)-f\left(s(\xi+\eta)+h\xi\right)-f\left(s(\xi+\eta)+h\eta\right)+f\left(s(\xi+\eta)\right)\right|\\
&& \times s^2 \sigma(d\xi)\sigma(d\eta)ds\\
&\leq& C h^{\rho_2}\int_0^{\infty}
w^{\frac{\alpha-3-\rho_2 }{2}}e^{-w}dw \leq C
h^{\rho_2},
\end{eqnarray*}
and the \eref{time cond 2} is satisfied for $0<\rho_2 < \min
(\alpha-1,2)$. This completes the proof.
\end{proof}

\medskip
Notice  that, with the notation of Corollary 4.2, for the Bessel kernel we can take $\kappa_1=\kappa_2 <(\alpha -1)\wedge 1$, and we deduce  the local H\"older continuity  of the solution $u$ in space and time  variables of order $\kappa < \min(\gamma_1, \gamma_2,  \frac{\alpha-1}2\wedge 1 )$.

\section{The Fractional Noise}

In this section we consider the case where  $\dot
{W}(t,x)$ is fractional Brownian noise  in the space variable  with Hurst parameters $H_1,H_2,H_3$ in each direction. That is,  suppose that $\left\{ W(t, x),  t\ge 0 , x\in
\mathbb{R}^3\right\}$  is a centered Gaussian field with the
 covariance
\begin{equation}
\EE \left[ W(s, x)W(t, y)\right]= (s\wedge t) \prod_{i=1}^3  R_i(x_i,
y_i)\,, \qquad s\,, t\ge 0\,, x, y\in \RR^3\,,
\end{equation}
where
\begin{equation}
R_i(u , v )=\frac12 \left( |u |^{2H_i}+ |v
|^{2H_i}-|u-v|^{2H_i}\right).
\end{equation}
Then $\dot {W}(t,x)$ is the formal partial derivative $\frac{\partial^4 W}{\partial t \partial x_1 \partial x_2 \partial x_3}(t,x)$.
We will require  $\frac{1}{2}<H_i<1$, $i=1,2,3$. This choice of noise corresponds to the covariance function
\begin{equation}\label{fractional ker}
f(x)=c_H|x_1|^{2H_1-2}|x_2|^{2H_2-2}|x_3|^{2H_3-2},
\end{equation}
where $H=(H_1,H_2,H_3)$ and $c_H=\prod_{i=1}^3 H_i (2H_i-1) $.
Here and in what follows for simplicity, we  omit the coefficient $c_H$ in the expression of $f(x)$.  The corresponding
spectral measure is
\begin{equation}
\mu(d\xi)=C_H|\xi_1|^{1-2H_1}|\xi_2|^{1-2H_2}|\xi_3|^{1-2H_3}
\end{equation}
 for some constant $C_H$ which depends only on $H$. We
will apply Theorems \ref{space Holder Thm} and \ref{time holder Thm}
to get the H\"older continuity  of the solution to Equation  (1.1) in the space and time variables.

\begin{theorem}\label{fractional example}
Assume conditions $(a)$ and $(b)$ in Theorem \ref{space Holder Thm
no Fourier} and let $f$ be given by \eref{fractional ker} (without the constant $c_H$) with
$H_1+H_2+H_3>2$.  Set
\begin{equation} \label{kap}
\bar{\kappa}= H_1+H_2+H_3 -2,
\end{equation}
 and choose  constants $\kappa_i>0$, $i=0,1,2,3$ such that
$\kappa_i < \min(H_i-\frac{1}{2},
\bar{\kappa},\gamma_1,\gamma_2)$ for $ i=1,2,3$, and
$\kappa_0 \le \min(\kappa_1, \kappa_2, \kappa_3)$.
Then  the solution to \eref{our SPDE} is locally H\"older
continuous with exponent $\kappa_0$ in the time variable and with exponent $\kappa_i$ in the $i$th direction. Namely, for any bounded rectangle $I\subset \RR^3$,
there exists a random variable $K$ (depending on $I$ and the constants $\kappa_i$'s), such that \begin{equation*}
|u(t,x)-u(\bar{t},y)|\leq K
(|x_1-y_1|^{\kappa_1}+|x_2-y_2|^{\kappa_2}+|x_3-y_3|^{\kappa_3}+|\bar{t}-t|^{\kappa_0})
\end{equation*}
for all $t,\bar{t} \in
[0,T]$, $x,y \in I$.
\end{theorem}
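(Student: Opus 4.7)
The plan is to verify the hypotheses of Theorem \ref{space Holder Thm no Fourier} and Theorem \ref{time holder Thm} for the fractional kernel \eref{fractional ker} applied to shifts along each coordinate direction, and then to combine the resulting anisotropic estimates through a multi-parameter Kolmogorov continuity criterion.

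For the spatial H\"older continuity I fix $i\in\{1,2,3\}$ and exploit the fact that in the proof of Theorem \ref{space Holder Thm no Fourier} the hypotheses \eref{sp cond 1}--\eref{sp cond 2} are invoked only for the single shift $w=x-y$. Restricting attention to $x-y=he_i$, where $e_i$ denotes the $i$-th coordinate vector, it therefore suffices to establish
\[
\int_{|z|\le 2T}\frac{|f(z+he_i)-f(z)|}{|z|}\,dz\le C|h|^{\bar\kappa}\quad\text{and}\quad \int_{|z|\le 2T}\frac{|f(z+he_i)+f(z-he_i)-2f(z)|}{|z|}\,dz\le C|h|^{2H_i-1},
\]
which correspond to taking $\gamma=\bar\kappa$ and $\gamma'/2=H_i-\tfrac12$ in Theorem \ref{space Holder Thm no Fourier} and therefore yield the spatial exponent $\kappa_i<\min(H_i-\tfrac12,\bar\kappa,\gamma_1,\gamma_2)$. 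Both estimates will be proved by the dichotomy $\{|z_i|\le 2|h|\}$ versus $\{|z_i|>2|h|\}$. On the outer region I apply the pointwise bounds $\bigl||z_i+h|^{2H_i-2}-|z_i|^{2H_i-2}\bigr|\le C|h|\,|z_i|^{2H_i-3}$ and $\bigl||z_i+h|^{2H_i-2}+|z_i-h|^{2H_i-2}-2|z_i|^{2H_i-2}\bigr|\le Ch^2|z_i|^{2H_i-4}$; on the inner region I bound each difference by the sum of its two terms and use $\int_{|z_i|\le 2|h|}|z_i|^{2H_i-2}dz_i\le C|h|^{2H_i-1}$. In either case, the residual factor $\prod_{j\ne i}|z_j|^{2H_j-2}|z|^{-1}$ is integrated in polar coordinates, where the $r$-scale is controlled by $\bar\kappa>0$ (equivalent to \eref{H1}); the competition between the $\bar\kappa$-scale coming from the transverse integrals and the $(H_i-\tfrac12)$-scale coming from the $z_i$-direction is what produces the form $\kappa_i<\min(H_i-\tfrac12,\bar\kappa)$.

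For the temporal H\"older continuity I plan to apply Theorem \ref{time holder Thm}. Condition (1) holds with $\nu=1\wedge 2\bar\kappa$, since the polar computation gives $\int_{|z|\le h}f(z)/|z|\,dz=Ch^{2\bar\kappa}$. Condition (2) is supplied by the directional spatial estimates of the previous paragraph, extended to arbitrary shifts via the triangle inequality, with isotropic exponent $\kappa^\ast=\min(\kappa_1,\kappa_2,\kappa_3)$. Condition (3) is the most delicate point and will be handled by performing the same small/large-coordinate dichotomy inside the sphere parametrization $(\xi,\eta)\in S^2\times S^2$, yielding exponents $\rho_1$ and $\rho_2$ arbitrarily close to $\min_{i}(2H_i-1)$. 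Inserting these into $\kappa_2=\min\bigl(\gamma_1,\gamma_2,\kappa^\ast,\tfrac{\nu+1}{2},\tfrac{\rho_1+\kappa^\ast}{2},\tfrac{\rho_2}{2}\bigr)$ and using $\kappa^\ast<\min_i(H_i-\tfrac12)\wedge\bar\kappa$, one checks that $\kappa_2\ge\kappa^\ast=\kappa_0$, so the time exponent is at least $\kappa_0$.

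Finally, the directional spatial bounds and the temporal bound will be assembled through a multi-parameter Kolmogorov continuity criterion (the version proved in Section 7) to deliver the anisotropic local H\"older modulus asserted in the theorem. The main technical obstacle is condition (3) of Theorem \ref{time holder Thm}: the Riesz-kernel computations in Section 5.2 relied on rotational symmetry, which is unavailable for the product kernel \eref{fractional ker}, so the dichotomy must be carried out coordinate by coordinate within the sphere integrals, with additional care in borderline cases such as $H_j+H_k=\tfrac32$. Once this technical accounting is done, the remainder of the proof reduces to a routine application of the previously established general machinery.
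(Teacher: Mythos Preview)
Your spatial strategy differs from the paper's: you propose to verify conditions \eref{sp cond 1}--\eref{sp cond 2} of Theorem~\ref{space Holder Thm no Fourier} directly via a dichotomy on $|z_i|$, whereas the paper instead invokes a directional version of Theorem~\ref{space Holder Thm}, checking that $\mathcal{F}\bigl(|\xi_i|^{2\gamma}\mu\bigr)$ is a nonnegative function and that $\int_{\RR^3}|\xi_i|^{2\gamma}(1+|\xi|^2)^{-1}\mu(d\xi)<\infty$ for $\gamma<\min(H_i-\tfrac12,\bar\kappa)$. Your route is workable in principle, though the specific exponents you announce ($\gamma=\bar\kappa$, $\gamma'=2H_i-1$) are not exactly what the dichotomy delivers in every parameter regime; one really obtains $\gamma,\gamma'$ strictly below $\min(2H_i-1,2\bar\kappa)$, which still yields the correct~$\kappa_i$. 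The Fourier route is considerably shorter because the product structure of $\mu$ makes both required conditions immediate.

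The genuine gap is condition~(3) of Theorem~\ref{time holder Thm}. You concede this is the main obstacle and propose only a ``small/large-coordinate dichotomy inside the sphere parametrization,'' but the paper does something quite different and specific. It first telescopes $f(x+h\xi)-f(x)$ into three single-coordinate increments, then applies the one-dimensional Dalang--Sanz-Sol\'e identity \eref{increment a} (with $d=1$, $b=\rho_1$, $a=2H_i-1-\rho_1$) to each, producing integrands that factor coordinate-wise as $\prod_j|a_j\xi_j+b_j\eta_j+c_j|^{\alpha_j}$. The resulting sphere integrals are then rewritten, via Lemma~\ref{int identity of GGf} and the Fourier transform, as integrals involving $\sin\bigl((s+h)|\zeta|\bigr)/|\zeta|$ against the weight $|\zeta_1|^{1-2H_1+\rho_1}|\zeta_2|^{1-2H_2}|\zeta_3|^{1-2H_3}$, after which the $h^{\rho_1}$-bound follows by elementary estimates. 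A pure real-variable dichotomy on the sphere, by contrast, must cope with the fact that the increment $h\xi$ perturbs all three coordinates simultaneously while the singularities of $\prod_j|x_j|^{2H_j-2}$ lie on coordinate hyperplanes that intersect $S^2$ in great circles; it is far from clear that your sketch closes without eventually passing to the Fourier side. Your claimed range $\rho_1,\rho_2<\min_i(2H_i-1)$ also omits the constraint $<2\bar\kappa$ that the paper's Fourier argument produces and needs, though this does not affect the final $\kappa_0$ since $\kappa_0<\bar\kappa$ regardless.
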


\begin{proof}
First we consider the space variable.
Proceeding as in the proof of Theorem \ref{space Holder Thm}, it is easy to see that if for
some number $0<\gamma\leq 1$,
$\mathcal{F}\left(|\xi_1|^{2\gamma}\mu(d\xi)\right)(w)$ is a
nonnegative locally integrable function and
\begin{equation}\label{xi_1 int cond}
\int_{\RR^3}\frac{|\xi_1|^{2\gamma}\mu(d\xi)}{1+|\xi|^2}<\infty\,,
\end{equation}
then if  $\kappa_1 =\min(\gamma,\gamma_1,\gamma_2)$,  for any bounded rectangle $I\subset \RR^3$, and for any $q\ge 2$, there exists a constant $C$ such that
\begin{equation} \label{qq1}
\EE |u(t,x_1,x_2,x_3)-u(t,y_1,x_2,x_3)|^q\leq C |x_1-y_1|^{q\kappa_1}
\end{equation}
for any $t\in [0,T]$ and $x,y \in I$.

We claim that for $0<\gamma <
\min(H_1-\frac{1}{2}, \bar{\kappa})$,
$\mathcal{F}\left(|\xi_1|^{2\gamma}\mu(d\xi)\right)(w)$ is a
nonnegative locally integrable function and \eref{xi_1 int cond}
holds. Indeed, since
$\mu(d\xi)=|\xi_1|^{1-2H_1}|\xi_2|^{1-2H_2}|\xi_3|^{1-2H_3}d\xi$, we
have
\begin{eqnarray*}
\mathcal{F}\left(|\xi_1|^{2\gamma}\mu(d\xi)\right)(w)&=&\mathcal{F}\left(|\xi_1|^{1-2H_1+2\gamma}|\xi_2|^{1-2H_2}|\xi_3|^{1-2H_3}\right)(w)\\
&=&C|w_1|^{-2+2H_1-2\gamma}|w_2|^{2H_2-2}|w_3|^{2H_3-2}\,,
\end{eqnarray*}
which is well defined because
$\gamma<H_1-\frac{1}{2}$. To show \eref{xi_1 int cond}, we have
\begin{eqnarray*}
\int_{\RR^3}\frac{|\xi_1|^{2\gamma}\mu(d\xi)}{1+|\xi|^2}&=&\int_{\RR^3}\frac{|\xi_1|^{1-2(H_1-\gamma)}|\xi_2|^{1-2H_2}|\xi_3|^{1-2H_3}}{1+|\xi|^2}d\xi\\
&\leq&\int_{\RR}\frac{|\xi_1|^{1-2(H_1-\gamma)}}{(1+|\xi_1|^2)^{\alpha_1}}d\xi_1\int_{\RR}\frac{|\xi_2|^{1-2H_2}}{(1+|\xi_2|^2)^{\alpha_2}}d\xi_2\int_{\RR}\frac{|\xi_3|^{1-2H_3}}{(1+|\xi_3|^2)^{\alpha_3}}d\xi_3\,,
\end{eqnarray*}
where the $\alpha_i$'s are positive with $\alpha_1+\alpha_2+\alpha_3=1$.
When $1-2(H_1-\gamma)-2\alpha_1 < -1$, $1-2H_2-2\alpha_2 < -1$ and
$1-2H_3-2\alpha_3 < -1$, the above three integrals are finite. It is
elementary to see such $\alpha_i$'s exist under the condition $\gamma
< H_1+H_2+H_3-2$.    The same argument holds for the other coordinates.

For  the time variable, we will check  conditions (1) and (3)
in Theorem \ref{time holder Thm}.
To see that   condition (1) in Theorem
\ref{time holder Thm} is satisfied for some $0 < \nu \leq 1$, take positive numbers
$\varepsilon_i$, $i=1,2,3$ such that
$\varepsilon_1+\varepsilon_2+\varepsilon_3=1$ and
$2H_i-1-\varepsilon_i>0$ for $i=1,2,3$. Then we have
\begin{eqnarray*}
\int_{|z|\leq h}\frac{f(z)}{|z|}dz &\leq& \int_{|z_1|\leq
h}\frac{|z_1|^{2H_1-2}}{|z_1|^{\varepsilon_1}}dz_1\int_{|z_2|\leq
h}\frac{|z_2|^{2H_2-2}}{|z_2|^{\varepsilon_2}}dz_2\int_{|z_3|\leq
h}\frac{|z_3|^{2H_3-2}}{|z_3|^{\varepsilon_3}}dz_3\\
&\leq&C h^{2H_1-1-\varepsilon_1} h^{2H_2-1-\varepsilon_2}
h^{2H_3-1-\varepsilon_3}=h^{2(H_1+H_2+H_3-2)}.
\end{eqnarray*}
So  condition (1) in Theorem \ref{time holder Thm} is satisfied with
$\nu=\min(2(H_1+H_2+H_3-2),1)$.

To check \eref{time cond 1}, let $x=a( \xi+\eta)+ h\eta$. Then we decompose the difference  $f(x+h\xi)-f(x)$
 into the sum of three terms, each
of them containing an increment in one direction, and we obtain
\begin{eqnarray*}
&& |f(x+h\xi)-f(x)|\\
&=&\left||x_1+h\xi_1|^{2H_1-2}|x_2+h\xi_2|^{2H_2-2}|x_3+h\xi_3|^{2H_3-2}-|x_1|^{2H_1-2}|x_2|^{2H_2-2}|x_3|^{2H_3-2}\right|\\
&\leq&\left||x_1+h\xi_1|^{2H_1-2}-|x_1|^{2H_1-2}\right||x_2+h\xi_2|^{2H_2-2}|x_3+h\xi_3|^{2H_3-2}\\
&&+|x_1|^{2H_1-2}\left||x_2+h\xi_2|^{2H_2-2}-|x_2|^{2H_2-2}\right|  |x_3+h\xi_3|^{2H_3-2}\\
&&+|x_1|^{2H_1-2}|x_2|^{2H_2-2}\left  |  |x_3+h\xi_3|^{2H_3-2}-|x_3|^{2H_3-2}\right|.
\end{eqnarray*}
We claim that for some
$\rho_1 \in (0,1]$,   the integral on
$[0,T]\times S^2 \times S^2$   of  each of these three terms with respect to the measure
$s\sigma(d\xi)\sigma(d\eta)ds$ is bounded by $C h^{\rho_1}$.
To show this claim, we apply \eref{increment a} with $d=1$,
$b=\rho_{1}<\min(2H_1-1, 2H_2-1, 2H_3-1, 2(H_1+H_2+H_3-2))$,
$a=2H_i-\rho_1-1$ and $u=x_i$ to the $i$th summand ($i=1,2,3$) and we get
\begin{eqnarray*}
&& |f(x+h\xi)-f(x)|\\
&\leq&h^{\rho_1}\int_{\RR}dw|x_1-hw|^{2H_1-2-\rho_1}\left||w+\xi_1|^{\rho_1-1}-|w|^{\rho_1-1}\right||x_2+h\xi_2|^{2H_2-2}|x_3+h\xi_3|^{2H_3-2}\\
&&+h^{\rho_1}\int_{\RR}dw|x_2-hw|^{2H_2-2-\rho_1}\left||w+\xi_2|^{\rho_1-1}-|w|^{\rho_1-1}\right||x_1|^{2H_1-2}|x_3+h\xi_3|^{2H_3-2}\\
&&+h^{\rho_1}\int_{\RR}dw|x_3-hw|^{2H_3-2-\rho_1}\left||w+\xi_3|^{\rho_1-1}-|w|^{\rho_1-1}\right||x_1|^{2H_1-2}|x_2|^{2H_2-2}\\
&:=&h^{\rho_1}\left(g_{h,\xi}^1(x)+g_{h,\xi}^2(x)+g_{h,\xi}^3(x)\right).
\end{eqnarray*}
We want to show that for $i=1,2,3$
\begin{equation}\label{g^i_h,xi}
\sup_{0<h\leq 1}\int_0^T \int_{S^2 \times S^2} s
g_{h,\xi}^i(s\xi+(s+h)\eta)\sigma(d\xi)\sigma(d\eta)ds <\infty .
\end{equation}
We will consider only the case $i=1$, the other two terms being similar. By
splitting the integral with respect to $w$ into two parts, one
over $|w|\leq 3$, and  another one over $|w|>3$, just as we did  for the Riesz kernel, we have
\begin{eqnarray*}
&&\int_0^T \int_{S^2\times S^2}
sg^1_{h,\xi}(s\xi+(s+h)\eta)\sigma(d\xi)\sigma(d\eta)ds\\
&\leq&\int_0^T \int_{S^2 \times S^2} s \int_{|w|\leq 3}
|s\xi_1+(s+h)\eta_1-hw|^{2H_1-2-\rho_{1}}|(s+h)\xi_2+(s+h)\eta_2|^{2H_2-2}\\
&&\qquad\times\ |(s+h)\xi_3+(s+h)\eta_3|^{2H_3-2}|w+\xi_1|^{\rho_{1}-1}dw\sigma(d\xi)\sigma(d\eta)ds\\
&&+\int_0^T \int_{S^2 \times S^2} s \int_{|w|\leq 3}
|s\xi_1+(s+h)\eta_1-hw|^{2H_1-2-\rho_{1}}|(s+h)\xi_2+(s+h)\eta_2|^{2H_2-2}\\
&&\qquad\times\ |(s+h)\xi_3+(s+h)\eta_3|^{2H_3-2}|w|^{\rho_{1}-1}dw\sigma(d\xi)\sigma(d\eta)ds\\
&&+\int_0^T \int_{S^2 \times S^2} s \int_{|w|> 3}
|s\xi_1+(s+h)\eta_1-hw|^{2H_1-2-\rho_{1}}|(s+h)\xi_2+(s+h)\eta_2|^{2H_2-2}\\
&&\qquad\times|(s+h)\xi_3+(s+h)\eta_3|^{2H_3-2}\left||w+\xi_1|^{\rho_{1}-1}-|w|^{\rho_{1}-1}\right|dw\sigma(d\xi)\sigma(d\eta)ds\\
&:=&I+II+III.
\end{eqnarray*}
For integral $I$, using the change of variable $w+\xi_1\rightarrow
w$ and the Fourier transform,  we can write
\begin{eqnarray*}
I&\leq& \int_0^T \int_{S^2\times S^2} s \int_{|w|\leq 4}
|(s+h)\xi_1+(s+h)\eta_1-hw|^{2H_1-2-\rho_{1}}|(s+h)\xi_2+(s+h)\eta_2|^{2H_2-2}\\
&&\times|(s+h)\xi_3+(s+h)\eta_3|^{2H_3-2}|w|^{\rho_{1}-1}dw
\sigma(d\eta)\sigma(d\xi)ds\\
&\leq&\int_0^T \int_{|w|\leq 4} dw |w|^{\rho_{1}-1} \sup_{w\in
\RR}\int_{\RR^3}\frac{s}{(s+h)^2}|z_1|^{1-2H_1+\rho_{1}}e^{iz_1hw}|z_2|^{1-2H_2}|z_3|^{1-2H_3}\\
&&\times\left(\frac{\sin(s+h)|z|}{|z|}\right)^2 dz ds.
\end{eqnarray*}
By the change of variable $(s+h)z=x$, the bound $|e^{iz_1hw}|\leq 1$
and direct calculation, we see the integral is finite uniformly in
$0< h\leq 1$.

For the integral $II$ we can write
\begin{eqnarray*}
 II&=&\int_{|w|\leq 3}\int_0^T \int_{S^2 \times S^2} s
|s\xi_1+(s+h)\eta_1-hw|^{2H_1-2-\rho_{1}}|(s+h)\xi_2+(s+h)\eta_2|^{2H_2-2}\\
&&\times|(s+h)\xi_3+(s+h)\eta_3|^{2H_3-2}\sigma(d\xi)\sigma(d\eta)ds|w|^{\rho_1-1}dw\\
&=& \int_{|w|\leq 3} \int_0^T \int_{\RR^3\times
\RR^3}\frac{s}{(s+h)^2}G(s+h,dy)G(s+h,dz)\\
&&\times|\frac{s}{s+h}y_1+z_1-hw|^{2H_1-2-\rho_{1}}|y_2+z_2|^{2H_2-2}|y_3+z_3|^{2H_3-2}ds|w|^{\rho_1-1}dw\\
&=&\int_{|w|\leq 3} \int_0^T \int_{\RR^3}
\frac{s}{(s+h)^2} G(s+h)*G^\psi(s+h)(z)\\
&&\times|z_1-hw|^{2H_1-2-\rho_1}|z_2|^{2H_2-2}|z_3|^{2H_3-2}dzds
|w|^{\rho_1-1}dw,
\end{eqnarray*}
where $\psi(y)=\left( \frac s{s+h}y_1,y_2,y_3\right)$, and  $G^\psi(s+h)$ denotes the image of the measure $G(s+h)$ by the mapping $\psi$.
 Then using the Fourier transform we obtain
\begin{eqnarray*}
II&=&\int_{|w|\leq 3}\int_0^T
\int_{\RR^3}\frac{s}{(s+h)^2}\left(\mathcal{F}\left(G(s+h )*G^\psi(s+h )\right)\right)(\xi)\\
&&\times e^{i\xi_1w} |\xi_1|^{1+\rho_{1}-2H_1}|\xi_2|^{1-2H_2}|\xi_3|^{1-2H_3} d\xi ds|w|^{\rho_1-1}dw\\
&\leq& \int_{|w|\leq 3} \int_0^T \int_{\RR^3}\frac{s}{(s+h)^2}
\left|\frac{\sin(s+h)|\xi|}{|\xi|}\right|\left|\frac
{\sin(s+h)|(\frac{s}{s+h}\xi_1,\xi_2,\xi_3)|}{|(\frac{s}{s+h}\xi_1,\xi_2,
\xi_3)|}\right|\\
&&\times |\xi_1|^{1+\rho_{1}-2H_1}|\xi_2|^{1-2H_2}|\xi_3|^{1-2H_3}d\xi ds |w|^{\rho_1-1}dw \\
&\leq& \int_{|w|\leq 3} \int_0^T \int_{|\xi|\leq
1}|\xi_1|^{1+\rho_{1}-2H_1}|\xi_2|^{1-2H_2}|\xi_3|^{1-2H_3}d\xi
ds |w|^{\rho_1-1}dw \\
&&+ \int_{|w|\leq 3} \int_0^T \int_{|\xi|>
1}\frac{s}{(s+h)^2}\left|\frac{\sin(s+h)|\xi|}{|\xi|}\right|\\
&&\quad \times\frac{1}{\frac{s}{s+h}|\xi|}|\xi_1|^{1+\rho_{1}-2H_1}|\xi_2|^{1-2H_2}|\xi_3|^{1-2H_3}d\xi
ds |w|^{\rho_1-1}dw <\infty,
\end{eqnarray*}
where in the first inequality above we used the fact that
$|e^{-i\xi_1w}|\leq 1$, and in the last inequality we used that fact
that $|\sin x|\leq |x|^{\varepsilon}$ for any $\varepsilon > 0$.   The above integral is finite uniformly in
$w$ and $0< h\leq 1$.

For the
third integral $III$, we can bound
$\left||w+\xi_1|^{\rho_{1}-1}-|w|^{\rho_{1}-1}\right|$ by
$C|w|^{\rho_{1}-2}$ as in the example of the Riesz kernel, and
proceed as in the second integral $II$. Applying the same argument for the
other two terms, we get \eref{g^i_h,xi} with $\rho_{1}\in (0, \min
(2H_1-1,2H_2-1,2H_3-1, 2 \bar{\kappa})$.
Therefore, condition \eref{time cond 1} is satisfied with
$0< \rho_1 < \min(2H_1-1,2H_2-1,2H_3-1, 2 \bar{\kappa})$.

For  condition \eref{time cond 2}, we use the inequality
\begin{eqnarray*}
&&\left|f\left((s+h)\xi+(s+h)\eta\right)-f\left((s+h)\xi+s\eta\right)-f\left((s+h)\eta+s\xi\right)+f\left(s(\xi+\eta)\right)\right|\\
&\leq &
\left|f\left((s+h)\xi+(s+h)\eta\right)-f\left((s+h)\xi+s\eta\right)\right|+\left|f\left((s+h)\eta+s\xi\right)-f\left(s\xi+s\eta\right)\right|.
\end{eqnarray*}
Then we can apply the previous procedure to both terms on the right-hand side and the argument is the same as in the case of condition
\eref{time cond 1}. We conclude that \eref{time cond 2} is satisfied
with $0< \rho_2< \min(2H_1-1,2H_2-1,2H_3-1,
2\bar{\kappa})$.

In summary, we can take   $\nu= \min(2 \bar{\kappa},1)$ and $\rho_1=\rho_2 \in (0,
\min(2H_1-1,2H_2-1,2H_3-1, 2 \bar{\kappa}))$, and Theorem 4.2 together with the moment estimate (\ref{qq1}) leads to the desired H\"older continuity in the space and time variables via an application of Kolmogorov's continuity theorem.
\end{proof}

\medskip
Consider Equation \eref{our SPDE}
with vanishing initial conditions $v_0$, $\bar{v}_0$ and coefficients
 $\sigma \equiv 1$ and  $b\equiv 0$. That means,  we consider
the stochastic wave equation with additive fractional noise
\begin{eqnarray}\label{additive-SPDE}
 \left\{\begin{array}{rcl}
 \left(\frac{\partial ^2}{\partial t^2}-\Delta u\right) (t,x)
&=& \dot W(t,x)   \,,\\ \\
u(0, x)=\frac{\partial u}{\partial t}(0, x)&=& 0.
\end{array}
\right.
\end{eqnarray}
The covariance function of the noise is given by \eref{fractional
ker} with $H_i> \frac{1}{2}$ for $i=1,2,3$ and recall that $\bar{\kappa} =H_1+H_2+H_3-2>0$.

For this equation the solution can be written as
\[
u(t, x)=\int_0^t \int_{\RR^3} G(t-s, x-y) W(ds, dy).
\]
In this case we are going to show that $\bar{\kappa}$ is  the
optimal exponent for the H\"older continuity of the solution $u $ in the space and time variables.
\begin{theorem} \label{additive THM}
Let $u$ be the solution to the stochastic partial differential
equation \eref{additive-SPDE}. Then
\begin{description}
\item{(a)} There are two positive constants $c_1$ and $c_2 $ such that
\begin{equation}
c_1|x-y|^{2\bar\kappa } \le \EE\left(|u(t,x)-u(t,y)|^2\right)
 \le c_2|x-y|^{2\kappa } \label{u-x}
\end{equation}
for all $x,y \in \mathbb{R}^3 $ and $t\in [0,T]$.
\item{(b)}   There are two positive  constants
$c_1$ and $c_2$ such that
\begin{eqnarray}
c_1|\bar{t}-t|^{2\kappa}\le \EE\left(|u(t,x)-u(\bar{t},x)|^2\right)\le
c_2|\bar{t}-t|^{2\kappa}. \label{u-t}
\end{eqnarray}
 for all $t,\bar{t} \in [t_0,T]$ and  $x \in \mathbb{R}^3$.
\end{description}
\end{theorem}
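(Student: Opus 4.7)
My plan is to exploit the fact that, under the hypotheses of the theorem, $u$ is a centered Gaussian random field, so it suffices to control its covariance. Using the It\^o isometry \eref{isometry property}, Lemma \ref{l.2.1} and the Fourier transform identity \eref{FourierTransofG}, I would first derive the explicit formulas
\begin{align*}
\EE|u(t,x)-u(t,y)|^2 &= 2\int_0^t ds\int_{\RR^3}\bigl(1-\cos(\xi\cdot w)\bigr)\frac{\sin^2\bigl((t-s)|\xi|\bigr)}{|\xi|^2}\mu(d\xi),\\
\EE|u(\bar{t},x)-u(t,x)|^2 &= \int_0^{h} ds\int_{\RR^3}\frac{\sin^2(s|\xi|)}{|\xi|^2}\mu(d\xi)\\
&\quad+ 4\int_{\RR^3}\sin^2\!\tfrac{h|\xi|}{2}\int_0^t \cos^2\!\tfrac{(\bar{t}+t-2s)|\xi|}{2}\,ds\,\frac{\mu(d\xi)}{|\xi|^2},
\end{align*}
with $w=x-y$ and $h=\bar{t}-t>0$. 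The time decomposition uses the orthogonality of the two Walsh integrals over the disjoint intervals $[0,t]$ and $(t,\bar{t}]$, together with $\sin a-\sin b=2\cos\!\tfrac{a+b}{2}\sin\!\tfrac{a-b}{2}$.

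The key structural fact is that the spectral density $g(\xi)=|\xi_1|^{1-2H_1}|\xi_2|^{1-2H_2}|\xi_3|^{1-2H_3}$ is homogeneous of degree $-1-2\bar\kappa$. Applying the change of variables $\xi=\eta/|w|$ to the first formula (and $\xi=\eta/h$ to each summand in the second) extracts a factor $|w|^{2\bar\kappa}$ (respectively $h^{2\bar\kappa}$), leaving an integral depending only on $\hat w=w/|w|$ and on the ratios $t/|w|$, $t/h$. For the upper bounds I would use the crude estimates $(1-\cos(\eta\cdot\hat w))\le\min(2,(\eta\cdot\hat w)^2)$ and $|\sin|,|\cos|\le 1$; the resulting integrals are finite because $0<\bar\kappa<1$ ensures integrability of $g(\eta)/|\eta|^2$ at infinity and of $(1-\cos)\,g(\eta)/|\eta|^2$ at the origin, after passing to spherical coordinates.

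For the space lower bound in (a), an explicit integration in $s$ gives $\int_0^t\sin^2\bigl((t-s)|\eta|/|w|\bigr)\,ds = t/2-|w|\sin(2t|\eta|/|w|)/(4|\eta|)\ge t/4$ on the set $\{|\eta|\ge|w|/t\}$, so
\[
\EE|u(t,x)-u(t,y)|^2\ge \tfrac{t}{2}|w|^{2\bar\kappa}\int_{|\eta|\ge|w|/t}(1-\cos(\eta\cdot\hat w))\frac{g(\eta)}{|\eta|^2}d\eta.
\]
As $|w|\to 0$ the integral on the right converges to $J(\hat w):=\int_{\RR^3}(1-\cos(\eta\cdot\hat w))g(\eta)/|\eta|^2d\eta$, which is strictly positive and continuous on $S^2$, so compactness yields $J(\hat w)\ge c>0$ uniformly. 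For the time lower bound in (b), I would discard the positive $[t,\bar{t}]$-contribution and restrict the $[0,t]$-contribution to the annulus $|\xi|\in[\pi/(2h),\pi/h]$, where $\sin^2(h|\xi|/2)\ge 1/2$; provided $t\ge t_0$ and $h$ is small enough, a short computation shows $\int_0^t\cos^2((\bar{t}+t-2s)|\xi|/2)\,ds\ge t/4$, and the rescaling $\xi=\eta/h$ then produces a lower bound of the form $cth^{2\bar\kappa}\int_{\pi/2\le|\eta|\le\pi}g(\eta)/|\eta|^2d\eta>0$.

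The hardest part will be the time lower bound: the $[t,\bar{t}]$-summand alone is only of order $h^{2\bar\kappa+1}$, so one must extract the sharp $h^{2\bar\kappa}$-order from the $[0,t]$-summand, which carries the oscillatory factor $\cos^2((\bar{t}+t-2s)|\xi|/2)$. The annulus-restriction argument above handles this, but crucially uses that $t$ is bounded away from $0$, matching the hypothesis $t,\bar{t}\in[t_0,T]$. The anisotropy of $g$ further forces a uniform-in-direction argument on $S^2$, which is resolved by continuity plus compactness of the sphere.
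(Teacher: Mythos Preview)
Your proposal is correct and follows essentially the same strategy as the paper: use the It\^o isometry and \eref{FourierTransofG} to reduce to a spectral integral, exploit the homogeneity of degree $-1-2\bar\kappa$ of the density $g(\xi)=\prod_i|\xi_i|^{1-2H_i}$ via the rescaling $\xi\mapsto\eta/|w|$ (resp.\ $\eta/h$), and obtain the lower bounds by restricting to a high-frequency region together with a continuity--compactness argument on $S^2$. Your factorization $\sin a-\sin b=2\cos\tfrac{a+b}{2}\sin\tfrac{a-b}{2}$ and the annulus $|\xi|\in[\pi/(2h),\pi/h]$ are a slightly cleaner bookkeeping than the paper's direct $A$--$B$ split of $Z_2$, but the underlying mechanism (the $[0,t]$-piece carries the sharp order $h^{2\bar\kappa}$, the $[t,\bar t]$-piece only $h^{2\bar\kappa+1}$, and one needs $t\ge t_0$) is identical.
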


\begin{proof}
For any $x \in \mathbb{R}^3$, set $R(x)=\EE\left(u(t,x)u(t,0)\right)$.
It is easy to see that
\begin{equation*}
\EE\left(|u(t,x)-u(t,y)|^2\right)=2\left(R(0)-R(x-y)\right).
\end{equation*}
Without loss of generality, we may assume that $t=1$ and $y=0$. We
have by Lemma \ref{int identity of GGf}
\begin{eqnarray}
R(0)-R(x)&=&\int_0^1 ds \int_{\mathbb{R}^3}\mu(d\xi)(1-e^{i\xi\cdot
x})|\mathcal{F}G(1-s)(\xi)|^2\nonumber\\
&=&\frac{1}{2}\int_{\mathbb{R}^3}d\xi
|\xi_1|^{1-2H_1}|\xi_2|^{1-2H_2}|\xi_3|^{1-2H_3}\frac{1}{|\xi|^2}\left(1-\cos(\xi\cdot
x)\right)\left(1-\frac{\sin(2|\xi|)}{2|\xi|}\right). \nonumber\\
\label{R}
\end{eqnarray}
The integrand is non-negative.  For clarity we may assume that
$|x_1|\le |x_2|\le |x_3|$.   If $|\xi_3|\ge 1$, then
$1-\frac{\sin(2|\xi|)}{2|\xi|}\geq \frac{1}{2}$. Thus  using  the
change of variable $\xi x_3=\eta$, we have
\begin{eqnarray*}
&&R(0)-R(x)\\
&\geq&\frac{1}{4}\int_{ |\xi_3|\geq 1
}|\xi_1|^{1-2H_1}|\xi_2|^{1-2H_2}|\xi_3|^{1-2H_3}\left(1-\cos(\xi
x )\right)\frac{1}{|\xi|^2}d\xi\\
&=&\frac{1}{4}|x_3|^{2\kappa }\int_{  |\eta_3|\geq
|x_3|}|\eta_1|^{1-2H_1}|\eta_2|^{1-2H_2}|\eta_3|^{1-2H_3}\left[
1-\cos\left( \frac{x_1}{x_3} \eta_1 + \frac{x_2}{x_3} \eta_2
+\eta_3\right)\right] \frac{1}{|\eta|^2}d\eta .
\end{eqnarray*}
If $x$ is in a bounded interval $I$, then there is $L>0$ such that
$|x_3|\le L$. Thus
\begin{eqnarray*}
&&R(0)-R(x)\\
&\geq&  \frac{1}{4}|x_3|^{2\kappa }\int_{  |\eta_3|\geq
L}|\eta_1|^{1-2H_1}|\eta_2|^{1-2H_2}|\eta_3|^{1-2H_3}\left[
1-\cos\left( \frac{x_1}{x_3} \eta_1
+ \frac{x_2}{x_3} \eta_2 +\eta_3\right)\right] \frac{1}{|\eta|^2}d\eta\\
&\geq&  \frac{1}{4}|x_3|^{2\kappa }\inf_{|u_1|, |u_2|\le 1} \int_{
|\eta_3|\geq
L}|\eta_1|^{1-2H_1}|\eta_2|^{1-2H_2}|\eta_3|^{1-2H_3}\left[
1-\cos\left( u_1 \eta_1 + u_2\eta_2 +\eta_3\right)\right]
\frac{1}{|\eta|^2}d\eta.
\end{eqnarray*}
It is easy to see  that
\[
g(u_1, u_2) :=\int_{  |\eta_3|\geq
L}|\eta_1|^{1-2H_1}|\eta_2|^{1-2H_2}|\eta_3|^{1-2H_3}\left[
1-\cos\left( u_1 \eta_1 + u_2\eta_2 +\eta_3\right)\right]
\frac{1}{|\eta|^2}d\eta
\]
is a continuous function of $u_1, u_2$ and for any $u_1$ and $u_2$,
$g(u_1, u_2)$ is positive. Thus
\[
\inf_{|u_1|, |u_2|\le 1} g(u_1, u_2)>0.
\]
This proves the  left-hand side  inequality in \eref{u-x}.

To show the second inequality in \eref{u-x}, we can use the
triangular inequality, and it suffices to show the inequality for
$x=(x_1, 0, 0)$. In this case
\begin{eqnarray*}
R(0)-R(x) &\le &
 \frac{1}{2}\int_{\mathbb{R}^3}d\xi
|\xi_1|^{1-2H_1}|\xi_2|^{1-2H_2}|\xi_3|^{1-2H_3}\frac{1}{|\xi|^2}\left(1-\cos(\xi_1
x_1)\right). \nonumber\\
&\le &
 \frac{1}{2}|x_1|^{2\kappa} \int_{\mathbb{R}^3}d\xi
|\xi_1|^{1-2H_1}|\xi_2|^{1-2H_2}|\xi_3|^{1-2H_3}\frac{1}{|\xi|^2}\left(1-\cos(\xi_1)\right)\\
&=& C|x_1|^{2\kappa}
\end{eqnarray*}
which is the second inequality of \eref{u-x}. Hence,  (a) is proved.

Now we turn to consider (b).  Let  $0\leq t < \bar{t}\leq T$. Then
we have
\begin{equation*}
\EE\left(|u(t,x)-u(\bar{t},x)|^2\right)=2Z_1(t,\bar{t},x)+2Z_2(t,\bar{t},x),
\end{equation*}
where
\begin{eqnarray*}
&&Z_1(t,\bar{t},x)=\EE\left(\int_t^{\bar{t}}\int_{\mathbb{R}^3}G(\bar{t}-s,x-y)W(ds,dy)\right)^2
\end{eqnarray*}
and
\begin{eqnarray*}
&&Z_2(t,\bar{t},x)=\EE\left(\int_0^t\int_{\RR^3}\left(G(\bar{t}-t,x-y)-G(t-s,x-y)\right)W(ds,dy)\right)^2.
\end{eqnarray*}
  Integrating with respect to the
 variable $s$ yields
\begin{eqnarray*}
Z_1(t,\bar{t},x)&=&\EE\left(\int_t^{\bar{t}}\int_{\mathbb{R}^3}G(\bar{t}-s,x-y)W(ds,dy)\right)^2\\
&=&C\int_{t}^{\bar{t}}ds\int_{\mathbb{R}^3}|\mathcal{F}G(\bar{t}-s)(\xi)|^2|\xi_1|^{1-2H_1}|\xi_2|^{1-2H_2}|\xi_3|^{1-2H_3}d\xi\\
&=&C\int_{\mathbb{R}^3}\left((\bar{t}-t)-\frac{\sin\left(2(\bar{t}-t)|\xi|\right)}{2|\xi|}\right)
\frac{1}{|\xi|^2}|\xi_1|^{1-2H_1}|\xi_2|^{1-2H_2}|\xi_3|^{1-2H_3}d\xi.
\end{eqnarray*}
With the change of variable $(\bar{t}-t)\xi \rightarrow \eta$  the
last integral becomes
\begin{eqnarray*}
C(\bar{t}-t)^{2\sum_{i=1}^3-3}\int_{\mathbb{R}^3}
\left(1-\frac{\sin(2|\eta|)}{2|\eta|}\right)\frac{1}{|\eta|^2}|\eta_1|^{1-2H_1}
|\eta_2|^{1-2H_2}|\eta_3|^{1-2H_3}d\eta.
\end{eqnarray*}
Therefore, we have
\begin{equation}
c_1 |\bar{t}-t|^{2\kappa+1}\le Z_1(t,\bar{t},x)\le  c_2
|\bar{t}-t|^{2\kappa+1}. \label{z1}
\end{equation}
The term $Z_2$ is slightly more complicated. A direct integration in the
variable $s$ yields
\begin{eqnarray*}
Z_2(t,\bar{t},x)&=&\int_0^t ds \int_{\mathbb{R}^3} d\xi
|\xi_1|^{1-2H_1}|\xi_2|^{1-2H_2}|\xi_3|^{1-2H_3}\\
&&\times\frac{1}{|\xi|^2}\left(\sin\left((\bar{t}-s)|\xi|\right)-\sin\left((t-s)|\xi|\right)\right)^2\\
&\geq& \int_{{|\xi|\geq(\bar{t}-t)^{-1}} }d\xi
|\xi_1|^{1-2H_1}|\xi_2|^{1-2H_2}|\xi_3|^{1-2H_3}\frac{1}{|\xi|^2} \left(A(t,\bar{t},\xi)+B(t,\bar{t},\xi)\right)\\
&=& tI_1+I_2\,,
\end{eqnarray*}
where
\begin{eqnarray*}
 A(t,\bar{t},\xi)&=& t\left(1-\cos((\bar{t}-t)|\xi|)\right)
 \end{eqnarray*}
and
\begin{eqnarray*}
 B(t,\bar{t},\xi)&=&
 \frac{1}{4|\xi|} \sin(2 (\bar t-t)|\xi|) +\frac1{2|\xi|} \sin ((\bar t-t)|\xi|)-
 \frac{1}{4|\xi|} \sin (2\bar t |\xi|) \\
 &&\qquad -\frac{1}{4|\xi|} \sin (2t|\xi|) -\frac{1}{2|\xi|} \sin((\bar t+t)|\xi|).
\end{eqnarray*}
The  change of variable $(\bar{t}-t)\xi=\eta$ yields
\begin{eqnarray}
I_1 &\ge &\int_{|\xi|\geq(\bar{t}-t)^{-1}}d\xi
|\xi_1|^{1-2H_1}|\xi_2|^{1-2H_2}|\xi_3|^{1-2H_3}\frac{1}{|\xi|^2}\frac{1}{t}A(t,\bar{t},\xi)
\nonumber\\
&\geq&(\bar{t}-t)^{2\kappa }\int_{  |\eta |\geq
1}d\eta|\eta_1|^{1-2H_1}|\eta_2|^{1-2H_2}|\eta_3|^{1-2H_3}\frac{1}{|\eta|^2}
  (1-\cos|\eta|)  \nonumber\\
&\geq& c_1|\bar{t}-t|^{2\kappa }.
\end{eqnarray}
Similarly,
\begin{eqnarray*}
I_2&=&\int_{|\xi|\geq(\bar{t}-t)^{-1}}d\xi
|\xi_1|^{1-2H_1}|\xi_2|^{1-2H_2}|\xi_3|^{1-2H_3}\frac{1}{|\xi|^2}B(t,\bar{t},\xi)\\
&\geq&-c_1 \int_{|\xi|\geq (\bar{t}-t)^{-1}}d\xi
|\xi_1|^{1-2H_1}|\xi_2|^{1-2H_2}|\xi_3|^{1-2H_3}\frac{1}{|\xi|^3} \\
&\geq&-c_1(\bar{t}-t)^{2\kappa +1 }\int_{ |\eta |\geq 1} d\eta
\frac{1}{|\eta|^3}|\eta_1|^{1-2H_1}|\eta_2|^{1-2H_2}|\eta_3|^{1-2H_3}.
\end{eqnarray*}
Therefore,
\begin{equation*}
Z_2(t,\bar{t},x)\geq c_1|\bar{t}-t|^{2\kappa
}-c^{\prime}_2|\bar{t}-t|^{2\kappa+1}\geq c_1^{\prime} |\bar{t}-t|^{2\kappa }
\end{equation*}
when  $|\bar{t}-t|$ is  sufficiently small.  So we conclude that
\begin{equation*}
\EE\left(|u(t,x)-u(\bar{t},x)|^2\right)\geq c_1 |\bar{t}-t|^{2\kappa }.
\end{equation*}
On the other hand, we have
\begin{eqnarray*}
Z_2(t,\bar{t},x) &\le &c_2 \int_0^t ds \int_{|\xi|\le (\bar
t-t)^{-1} } d\xi
|\xi_1|^{1-2H_1}|\xi_2|^{1-2H_2}|\xi_3|^{1-2H_3} (\bar t-t)^2 \\
&&\qquad +c_2 \int_0^t ds \int_{|\xi|\ge  (\bar t-t)^{-1} } d\xi
|\xi_1|^{1-2H_1}|\xi_2|^{1-2H_2}|\xi_3|^{1-2H_3}
\frac{1}{|\xi|^2}.
\end{eqnarray*}
Applying the substitution $\xi (\bar t-t)=\eta$ to both the above
integrals, we see that
\begin{eqnarray*}
Z_2(t,\bar{t},x) &\le & c_2 |\bar{t}-t|^{2\kappa }.
\end{eqnarray*}
Thus   (b) is proved.
\end{proof}

Combining the upper bound in  \eref{u-x} and \eref{u-t},  taking into account that the process $u$ is Gaussian and applying  Kolmogorov continuity criterion,  for any $\delta >0$ and any bounded rectangle  $I\subset \mathbb{R}^3$, there
is a random variable $c_{\delta, I} $ such that almost surely
\[
|u(s, x)-u(t, y)| \le c_{\delta,I} \left( |s-t|^{\kappa-\delta}
+|x-y|^{\kappa-\delta}\right) .
\]
   The first inequalities of \eref{u-x} and \eref{u-t} tell us that
the exponent $\kappa$ is the optimal.
\begin{remark}
Theorem 5.1 in \cite{dalang4} shows that the result obtained in Section 5.2 is optimal. The result in Theorem \eref{additive THM} suggests that the result in Theorem
\ref{fractional example} may not be optimal. To prove the result is
optimal or to find the optimal result needs further research.
\end{remark}

\section{Appendix}
In this section we prove some lemmas used in this paper.
\begin{lemma}\label{conv of G G lemma} For any $s\ge t$
\begin{equation*}
\left(G(s)*G(t)\right)(dx)=\frac{1}{8\pi |x|}{\bf
1}_{[s-t,s+t]}(|x|)dx.
\end{equation*}
\end{lemma}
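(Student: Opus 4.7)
The plan is to exploit the fact that $G(s)*G(t)$ is a rotationally invariant measure on $\R^3$ (being the convolution of two rotationally invariant measures $\frac{1}{4\pi s}\sigma_s$ and $\frac{1}{4\pi t}\sigma_t$), and therefore is completely determined by its action on radially symmetric test functions. So I would fix an arbitrary radially symmetric $\varphi(z)=\tilde\varphi(|z|)$ and verify that both sides of the claimed identity integrate $\varphi$ to the same quantity.

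On the right-hand side, passing to polar coordinates in $\R^3$ gives directly
\[
\int_{\R^3} \tilde\varphi(|x|)\,\frac{1}{8\pi |x|}\,\mathbf{1}_{[s-t,s+t]}(|x|)\,dx = \frac{1}{2}\int_{s-t}^{s+t} r\,\tilde\varphi(r)\,dr.
\]
For the left-hand side, I would write
\[
\int_{\R^3}\varphi\,d(G(s)*G(t)) = \frac{1}{16\pi^2 st}\int_{|x|=s}\int_{|y|=t}\tilde\varphi(|x+y|)\,\sigma(dy)\,\sigma(dx),
\]
and then use rotational invariance to fix $x=s e_3$ so that the inner integral is independent of $x$. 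Parametrizing $y=t(\sin\theta\cos\phi,\sin\theta\sin\phi,\cos\theta)$, I have $|x+y|^2=s^2+t^2+2st\cos\theta$. The key computational step is the change of variables $r=\sqrt{s^2+t^2+2st\cos\theta}$, whose Jacobian gives $\sin\theta\,d\theta=\frac{r}{st}\,dr$, and which maps $\theta\in[0,\pi]$ to $r\in[s-t,s+t]$ (using $s\ge t$). After this substitution the inner integral becomes $\frac{2\pi t}{s}\int_{s-t}^{s+t}r\,\tilde\varphi(r)\,dr$, and multiplying by $4\pi s^2$ (the total mass from the outer integral) and the prefactor $\frac{1}{16\pi^2 st}$ yields exactly $\frac{1}{2}\int_{s-t}^{s+t}r\,\tilde\varphi(r)\,dr$, matching the right-hand side.

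There is no real obstacle here; the only point requiring care is bookkeeping the normalizations of $G(s)=\frac{1}{4\pi s}\sigma_s$ versus the raw surface measure, and observing that $s\ge t$ is used precisely to identify the range of $r$ as $[s-t,s+t]$ rather than $[|s-t|,s+t]$. As a sanity check, the Fourier side matches: $\mathcal F(G(s)*G(t))(\xi)=\frac{\sin(s|\xi|)\sin(t|\xi|)}{|\xi|^2}=\frac{\cos((s-t)|\xi|)-\cos((s+t)|\xi|)}{2|\xi|^2}$, which is consistent with a density supported on the spherical shell of radii $s-t$ to $s+t$.
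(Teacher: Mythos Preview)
Your proof is correct and follows essentially the same approach as the paper: exploit rotational invariance to reduce to a radial computation, fix one point on the axis, parametrize the other in spherical coordinates, and apply the change of variables $r=\sqrt{s^2+t^2+2st\cos\theta}$. The paper phrases this probabilistically (computing the density of $X+Y$ for independent uniform variables on the two spheres), but the underlying calculation is identical to yours.
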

\begin{proof}
To calculate $\left(G(t)*G(s)\right)(dx)$, let us consider two
independent random variables $X$ and $Y$ uniformly distributed on
the spheres with radii $s$ and $t$ respectively with $s\geq t$. Note that the distribution of $X+Y$ is rotationally
invariant. Consider a bounded continuous function $\varphi$ on
$\RR$. We have
\begin{eqnarray*}
&&\EE\left(\varphi(|X+Y|)\right)=\frac{1}{(4\pi)^2}\int_{S^2}\int_{
S^2}\varphi(|sx+ty|)\sigma(dy)\sigma(dx).
\end{eqnarray*}
It is easy to see that in the above expression, the integral with
respect to $\sigma(dy)$ does not depend on $x$, so we can take
$x=x_0=(0,0,1)$, and using spherical coordinates
$y=(\sin\phi\cos\theta,\sin\phi\sin\theta,\cos\phi)$, we have
\begin{eqnarray*}
&&\EE\left(\varphi(|X+Y|)\right)=\frac{1}{4\pi}\int_{S^2}\varphi\left(|s
x_0+ty|\right)\sigma(dy)=\frac{1}{2}\int_0^{\pi}\varphi\left(\sqrt{s^2+t^2+2ts\cos\phi}\right)\sin\phi
d\phi.
\end{eqnarray*}
Making the change of variable $u=\sqrt{s^2+t^2+2ts\cos\phi}$, we
obtain
\begin{eqnarray*}
&&\EE\left(\varphi(|X+Y|)\right)=\frac{1}{2ts}\int_{s-t}^{s+t}\varphi(u)u
du =\frac{1}{2ts}\int_{B(0,s+t)\setminus
B(0,s-t)}\varphi(|z|)\frac{1}{4\pi|z|}dz,
\end{eqnarray*}
where $B(0,r)$ is the ball in $\RR^3$ with center $0$ and radius
$r$. So we conclude that the random variable $X+Y$ has  a density given
by
\begin{equation}
\rho(z)=\frac{1}{8\pi ts |z|}\mathbf{1}_{[s-t,s+t]}(|z|).
\end{equation}
This completes the proof of the lemma.
\end{proof}

\begin{lemma}\label{int identity of GGf}
Let $G(t,dx)$ denote the fundamental solution of the 3 dimensional
wave equation, $f$ is a non-negative and non-negative definite function which satisfies \eref{H1} and $\mu(d\xi)$ is the spectral measure.
Then for fixed
$t,s>0$, we have
\begin{equation}
\int_{\RR^3}\int_{\RR^3}f(x-y)G(s,dx)G(t,dy)=\int_{\RR^3}\frac{\sin
t |\xi|}{|\xi|}\frac{\sin s |\xi|}{|\xi|}\mu(d\xi).
\end{equation}
\end{lemma}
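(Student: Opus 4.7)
\smallskip
\noindent \textbf{Proof proposal.} The plan is to rewrite both sides as Fourier-dual pairings and reduce the identity to the defining equation \eqref{def of spectral measure mu} of the spectral measure via a mollification argument. First I would consolidate the left-hand side: since $G(t)$ is the symmetric finite measure $(4\pi t)^{-1}\sigma_t$, we have $\int\!\!\int f(x-y)G(s,dx)G(t,dy)=\int f(z)\,\nu_{s,t}(dz)$, where $\nu_{s,t}:=G(s)*G(t)$ is a finite, compactly supported measure whose explicit density is given by the preceding lemma (Lemma 7.1). In particular condition \eqref{H1} guarantees that this integral is finite.

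Next I would introduce a standard nonnegative even mollifier $\psi\in C_0^\infty(\RR^3)$ with $\int\psi=1$, set $\psi_n(x)=n^3\psi(nx)$, and define
\[
\varphi_n:=\psi_n*\nu_{s,t}\in C_0^\infty(\RR^3)\subset \mathcal{S}(\RR^3).
\]
By the defining property \eqref{def of spectral measure mu} of the spectral measure,
\[
\int_{\RR^3}f(x)\varphi_n(x)\,dx=\int_{\RR^3}\mathcal{F}\varphi_n(\xi)\,\mu(d\xi),
\]
and since $\mathcal{F}\varphi_n=\mathcal{F}\psi_n\cdot\mathcal{F}G(s)\cdot\mathcal{F}G(t)$ with $\mathcal{F}G(t)(\xi)=\sin(t|\xi|)/|\xi|$ by \eqref{FourierTransofG}, the right-hand side already has the desired form up to the factor $\mathcal{F}\psi_n$. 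The final step is to pass to the limit $n\to\infty$ on both sides.

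On the Fourier side, $|\mathcal{F}\psi_n(\xi)|\le 1$ and $\mathcal{F}\psi_n(\xi)\to 1$ pointwise, while
\[
\left|\frac{\sin(s|\xi|)}{|\xi|}\cdot\frac{\sin(t|\xi|)}{|\xi|}\right|\le C_{s,t}\,(1+|\xi|^2)^{-1},
\]
which is $\mu$-integrable by the equivalent form \eqref{H2} of \eqref{H1}; dominated convergence yields the right-hand side of the lemma. On the spatial side, Fubini gives
\[
\int f(x)\varphi_n(x)\,dx=\int(\psi_n*f)(z)\,\nu_{s,t}(dz),
\]
and since $f$ is locally integrable and $\nu_{s,t}$ is supported in $\{|z|\le s+t\}$ with density proportional to $|z|^{-1}$, the convergence $\psi_n*f\to f$ in $L^1_{\mathrm{loc}}$ together with the integrability consequence of \eqref{H1} (via Lemma 7.1) delivers
\[
\int(\psi_n*f)\,d\nu_{s,t}\longrightarrow\int f\,d\nu_{s,t},
\]
which is the left-hand side of the lemma. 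Equating the two limits proves the identity.

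The only delicate point, and what I expect to be the main obstacle, is justifying dominated convergence on the spatial side in the presence of the (possibly singular) nonnegative function $f$. The clean way is to observe that by Fubini and the symmetry of $\psi_n$,
\[
\int(\psi_n*f)(z)\,\nu_{s,t}(dz)=\int f(y)\bigl(\psi_n*\nu_{s,t}\bigr)(y)\,dy,
\]
and then to use that $\psi_n*\nu_{s,t}$ is nonnegative, uniformly bounded in $L^1$, and supported in a fixed compact neighborhood of $\mathrm{supp}(\nu_{s,t})$; combined with Lemma 7.1 this provides the required dominating integrable majorant for $f$ against $\nu_{s,t}$ and the limit follows.
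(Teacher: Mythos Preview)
Your approach is essentially the same as the paper's: mollify $\nu_{s,t}=G(s)*G(t)$, apply the defining relation \eqref{def of spectral measure mu}, and pass to the limit on both sides, using \eqref{H2} for dominated convergence on the Fourier side and \eqref{H1} on the spatial side.

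The only point that needs sharpening is the last paragraph. Knowing that $\psi_n*\nu_{s,t}$ is nonnegative, uniformly bounded in $L^1$, and compactly supported does \emph{not} by itself yield a pointwise majorant that dominates the integrand $f(y)\,(\psi_n*\nu_{s,t})(y)$; you need a uniform pointwise bound of the form $(\psi_n*\nu_{s,t})(y)\le C|y|^{-1}\mathbf{1}_{\{|y|\le R\}}$, since then \eqref{H1} gives the integrable majorant. (Your $L^1_{\mathrm{loc}}$ argument would suffice when $s\neq t$, because the density of $\nu_{s,t}$ is bounded away from $0$; the issue is $s=t$.) The paper obtains exactly this bound by writing, for $|x|\le 3s$,
\[
(\psi_n*\nu_{s,t})(x)=\int_{|x-z|\ge |x|/2}\frac{\mathbf{1}_{[s-t,s+t]}(|x-z|)}{8\pi|x-z|}\,\psi_n(z)\,dz
+\int_{|x-z|< |x|/2}\frac{\mathbf{1}_{[s-t,s+t]}(|x-z|)}{8\pi|x-z|}\,\psi_n(z)\,dz,
\]
bounding the first integral by $2/|x|$ and, after the change of variable $z\mapsto z+x$, the second by $C|x|^2\psi_n(x/2)\le C/|x|$ using $\sup_y |y|^3\psi(y)<\infty$. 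With this bound in hand your dominated-convergence step goes through and the proof is complete.
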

\begin{proof}
Assume $s\geq t$. By Lemma \ref{conv of G G lemma}, we have
\begin{eqnarray*}
&&\int_{\RR^3}\int_{\RR^3}f(x-y)G(s,dx)G(t,dy)=\frac{1}{8\pi}\int_{\RR^3}\frac{1}{|z|}{\bf{1}}_{[s-t,s+t]}(|z|)f(z)dz.
\end{eqnarray*}
Let $\phi(x)=C \exp(\frac{1}{|x|^2-1}){\bf{1}}_{[0,1)}(|x|)$, where
$C$ is a normalization coefficient such that
$\int_{\RR^3}\phi(x)dx=1$, set
$\phi_{\varepsilon}(x)=\frac{1}{\varepsilon^3}\phi(\frac{x}{\varepsilon})$,
with $\varepsilon \leq t$. If we can show that
\begin{equation}
\lim_{\varepsilon \to
0}\int_{\RR^3}\left((\frac{1}{|\cdot|}{\bf{1}}_{[s-t,s+t]}(|\cdot|))*\phi_{\varepsilon}\right)(z)f(z)dz=\int_{\RR^3}\frac{1}{|z|}{\bf{1}}_{[s-t,s+t]}(|z|)f(z)dz\,,\label{GGf
lemma limit}
\end{equation}
then an application of Fourier transform and dominated convergence
theorem leads to the proof. To show \eref{GGf lemma limit}, first we
note that the function
\begin{equation*}
\left((\frac{1}{|\cdot|}{\bf{1}}_{[s-t,s+t]}(|\cdot|))*\phi_{\varepsilon}\right)(x)
\end{equation*}
is supported within a ball centered at the origin with radius $3s$
for every $\varepsilon \leq t$ and it converges to
$\frac{1}{|z|}{\bf{1}}_{[s-t,s+t]}$ almost everywhere. Next, for
$|x|\leq 3s$ we have
\begin{eqnarray*}
\left((\frac{1}{|\cdot|}{\bf{1}}_{[s-t,s+t]}(|\cdot|))*\phi_{\varepsilon}\right)(x)&=&\int_{\RR^3}\frac{1}{|x-z|}{\bf{1}}_{[s-t,s+t]}(|x-z|)\phi_{\varepsilon}(z)dz\\
&\leq&\int_{|x-z|\geq
\frac{|x|}{2}}\frac{1}{|x-z|}\phi_{\epsilon}(z)dz+\int_{|x-z|<\frac{|x|}{2}}\frac{1}{|x-z|}\phi_{\epsilon}(z)dz\\
&=&\frac{2}{|x|}+\int_{|z|<\frac{|x|}{2}}\frac{1}{|z|}\phi_{\varepsilon}(z+x)dz\,,
\end{eqnarray*}
where in the second integral we have used the change of variable
$z-x \rightarrow z$. Since in the second integral $|z|<
\frac{|x|}{2}$, we have $|z+x|\geq |x|-|z|\geq \frac{|x|}{2}$, and
\begin{eqnarray*}
\int_{|z|<\frac{|x|}{2}}\frac{1}{|z|}\phi_{\varepsilon}(z+x)\leq
\int_{|z|<\frac{|x|}{2}}\frac{1}{|z|}\phi_{\varepsilon}(\frac{x}{2})dz=C
|x|^2 \phi_{\varepsilon}(\frac{x}{2})\leq C \frac{1}{|x|}\,,
\end{eqnarray*}
where in the last inequality we used the fact that $\sup_{x \in
\RR^3} |x|^3 \phi(x) < \infty$. So we conclude that
\begin{equation*}
\left((\frac{1}{|\cdot|}{\bf{1}}_{[s-t,s+t]}(|\cdot|))*\phi_{\varepsilon}\right)(x)\leq
\frac{C}{|x|}{\bf{1}}_{[0,3s]}(|x|).
\end{equation*}
Then if \eref{H1} holds an application of dominated convergence
theorem gives \eref{GGf lemma limit}.
\end{proof}
\begin{lemma}\label{varphi G Fourier id}
Let $\varphi$ be a bounded Borel measurable function and assume that \eref{H1} holds. Then we have
\begin{equation}
\int_{\RR^3}\int_{\RR^3}\varphi(x)G(t,dx)\varphi(y)G(t,dy)f(x-y)=\int_{\RR^3}|\mathcal{F}\left(\varphi
G(t)\right)(\xi)|^2\mu(d\xi).
\end{equation}
\end{lemma}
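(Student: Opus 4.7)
The plan is to mollify $\varphi G(t)$ into a Schwartz function and pass to the limit in the Parseval-type identity built into the definition of the spectral measure $\mu$. I would first verify that the left-hand side is absolutely convergent: since $|\varphi(x)\varphi(y)|\le \|\varphi\|_\infty^2$, Lemma \ref{int identity of GGf} gives
\begin{equation*}
\int_{\RR^3}\int_{\RR^3} |\varphi(x)\varphi(y)| f(x-y) G(t,dx) G(t,dy) \le \|\varphi\|_\infty^2 \int_{\RR^3} \frac{\sin^2(t|\xi|)}{|\xi|^2}\mu(d\xi),
\end{equation*}
and the right-hand side is finite by \eqref{H2} combined with $\sin^2(t|\xi|)/|\xi|^2 \le C_t(1+|\xi|^2)^{-1}$. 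So both sides of the desired identity are well-defined; the task is to identify the limit of suitable approximations.

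Let $\psi\in C_0^\infty(\RR^3)$ be a symmetric non-negative bump with $\int\psi(x)\,dx=1$, $\psi_n(x)=n^3\psi(nx)$, and define $\varphi_n=(\varphi G(t))*\psi_n\in C_0^\infty(\RR^3)$. Since $\varphi_n*\tilde\varphi_n\in\mathcal{S}(\RR^3)$ with $\tilde\varphi_n(x)=\varphi_n(-x)$, the defining identity \eqref{def of spectral measure mu} of $\mu$ applied to this Schwartz function yields
\begin{equation*}
\int_{\RR^3}\int_{\RR^3} f(x-y) \varphi_n(x) \varphi_n(y)\, dx\, dy = \int_{\RR^3} |\mathcal{F}\varphi_n(\xi)|^2 \mu(d\xi).
\end{equation*}
By Fubini and a translation in the variables $x,y$, the left-hand side equals $\int\!\!\int \varphi(u)\varphi(v)(f*h_n)(u-v) G(t,du) G(t,dv)$, where $h_n=\psi_n*\tilde\psi_n$ is again a symmetric non-negative bump with unit integral and support shrinking to the origin.

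The hard part will be passing to the limit in the left-hand side for general bounded $\varphi$. My plan is the following. Let $\lambda$ be the pushforward of $G(t)\otimes G(t)$ under $(u,v)\mapsto u-v$; by Lemma \ref{conv of G G lemma} (using that $G(t)$ is invariant under reflection), $\lambda(dz)=(8\pi|z|)^{-1}\mathbf{1}_{[0,2t]}(|z|)\,dz$, which is absolutely continuous with respect to Lebesgue measure. Hence $(f*h_n)(z)\to f(z)$ holds $\lambda$-a.e.\ by Lebesgue differentiation (using local integrability of $f$). Since $f*h_n$ is itself non-negative and non-negative definite with spectral measure $|\mathcal{F}\psi_n|^2\mu$, Lemma \ref{int identity of GGf} applied to $f*h_n$ together with dominated convergence ($|\mathcal{F}\psi_n|\le 1$, $\to 1$ pointwise) yields $\int (f*h_n)\,d\lambda\to\int f\,d\lambda$. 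Scheff\'e's lemma then upgrades this to $f*h_n\to f$ in $L^1(\lambda)$, so the bound $|\varphi(u)\varphi(v)|\le\|\varphi\|_\infty^2$ gives convergence of the $\varphi$-weighted integral to the target left-hand side.

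For the right-hand side, $\mathcal{F}\varphi_n=\mathcal{F}(\varphi G(t))\cdot\mathcal{F}\psi_n$ with $|\mathcal{F}\psi_n|\le 1$ converging pointwise to $1$; Fatou combined with the finite limit just established forces $\int |\mathcal{F}(\varphi G(t))|^2 \mu<\infty$, and dominated convergence then identifies $\int |\mathcal{F}\varphi_n|^2\mu \to \int |\mathcal{F}(\varphi G(t))|^2 \mu$. Equating limits of both sides yields the desired identity. The key obstacle is upgrading the scalar convergence $\int(f*h_n)\,d\lambda\to\int f\,d\lambda$ to $L^1(\lambda)$-convergence in order to accommodate the bounded weight $\varphi\otimes\varphi$; Scheff\'e's lemma on the pushforward $\lambda$ is precisely the tool that resolves this.
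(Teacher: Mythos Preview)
Your proof is correct and follows the same mollify-and-limit template as the paper, but the mechanism for passing to the limit on the left-hand side is genuinely different. The paper convolves the density $(\varphi G(t))*\widetilde{(\varphi G(t))}$ with a single mollifier $\phi_\varepsilon$ (keeping $f$ fixed) and invokes the pointwise bound
\[
\bigl|\bigl[(\varphi G(t))*\widetilde{(\varphi G(t))}*\phi_\varepsilon\bigr](x)\bigr|\le \frac{C}{|x|}\,\mathbf{1}_{[0,3t]}(|x|),
\]
which it lifts from the proof of Lemma~\ref{int identity of GGf}; dominated convergence against $f$ using hypothesis~\eqref{H1} then finishes immediately. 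You instead transfer the mollification onto $f$ (producing $f*h_n$), push $G(t)\otimes G(t)$ forward to the explicit measure $\lambda$ of Lemma~\ref{conv of G G lemma}, and upgrade a.e.\ convergence of $f*h_n\to f$ to $L^1(\lambda)$-convergence via Scheff\'e's lemma, with the needed convergence of total mass supplied by a second application of Lemma~\ref{int identity of GGf} on the Fourier side. The paper's route is shorter once the pointwise dominant is in hand; your route sidesteps that estimate entirely and is arguably more self-contained, at the cost of the extra Scheff\'e ingredient. The treatment of the right-hand side (Fatou to get finiteness, then dominated convergence) is identical in both arguments.
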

\begin{proof}
 Let $\phi$ and $\phi_{\varepsilon}$  be as  in the proof of Lemma
\ref{int identity of GGf}. Then using the Fourier transform we have
\begin{equation}  \label{eu1}
\int_{\RR^3} \left(\left(\varphi G(t)*
\widetilde{\varphi G(t)}\right)* \phi_{\varepsilon}\right)(x) f(x)dx
  = \int_{\RR^3}\left|\mathcal{F}\left( \varphi
G(t)\right)(\xi)\right|^2
(\mathcal{F}\phi)(\varepsilon\xi)\mu(d\xi),
\end{equation}
where $\widetilde{\varphi G(t)}(x) =\varphi(-x) G(t,-dx)$.
Since $\varphi$ is bounded,  the same argument as in the proof of
Lemma \ref{int identity of GGf} shows that
\begin{equation}   \label{eu2}
\left|\left(\left(\varphi G(t)* \widetilde{\varphi G(t)}\right)*
\phi_{\varepsilon}\right)(x)\right|\leq
\frac{C}{|x|}{\bf{1}}_{[0,3t]}(|x|).
\end{equation}
Then if \eref{H1} holds, an application of the dominated convergence
theorem yields
\begin{eqnarray*}
\lim_{\varepsilon \to 0}\int_{\RR^3} \left(\left(\varphi G(t)*
\widetilde{\varphi G(t)}\right)* \phi_{\varepsilon}\right)(x)
f(x)dx=\int_{\RR^3}\int_{\RR^3}\varphi(x)G(t,dx)\varphi(y)G(t,dy)f(x-y).
\end{eqnarray*}
On the other hand, the estimate  (\ref{eu2}) implies that the quantity in (\ref{eu1}) is uniformly bounded in $\varepsilon$.
Hence, by Fatou's lemma  $\int_{\RR^3}\left|\mathcal{F}\left( \varphi
G(t)\right)(\xi)\right|^2 \mu(d\xi) <\infty$, and by dominated convergence, the right-hand side of   (\ref{eu1}) converges to
$\int_{\RR^3}\left|\mathcal{F}\left( \varphi
G(t)\right)(\xi)\right|^2
  \mu(d\xi)$. This  completes the proof of the lemma.
\end{proof}

More generally, given two measurable and bounded functions $\varphi$ and $\psi$, for any $t>0$ and $w\in \RR^3$  we have
\begin{equation} \label{eu3}
\int_{\RR^3}\int_{\RR^3}\varphi(x)G(t,dx)\psi(y)G(t,dy)f(x-y+w)=\int_{\RR^3} \mathcal{F}\left(\varphi
G(t)\right)(\xi)  \overline{ \mathcal{F} \left(\psi G(t) \right) (\xi) }e^{-i w\cdot \xi}\mu(d\xi).
\end{equation}

\end{document}